\newcommand{\xmark}{\ding{55}}%
\providecommand*{\Dashv}{%
  \mathrel{%
    \mathpalette\@Dashv\vDash
  }%
}
\newcommand*{\@Dashv}[2]{%
  \reflectbox{$\m@th#1#2$}%
}
\newtheorem{theorem}{Theorem}[section]
\newtheorem*{theorem*}{Theorem}
\newtheorem{corollary}[theorem]{Corollary}
\newtheorem{lemma}[theorem]{Lemma}
\newtheorem{definition}[theorem]{Definition}
\newtheorem{remark}[theorem]{Remark}
\newtheorem{proposition}[theorem]{Proposition}
\newtheorem{claim}{Claim}
\title{Modal Definability in Kripke's Theory of Truth}
\author{James Walsh}
\thanks{Thanks to Dan Appel, Reid Dale, Noah Schweber, Ted Slaman, and Kentar\^o Yamamoto for helpful discussion. Thanks to Neil Barton, Wes Holliday, Andrew Lee, and Matthew Mandelkern for comments on drafts.}
\begin{document}

\maketitle

\begin{abstract}
In \emph{Outline of a Theory of Truth}, Kripke introduces many of the central concepts of the logical study of truth and paradox. He informally defines some of these---such as groundedness and paradoxicality---using modal locutions. We introduce a modal language for regimenting these informal definitions. Though groundedness and paradoxicality are expressible in the modal language, we prove that intrinsicality---which Kripke emphasizes but does not define modally---is not. This follows from a characterization of the modally definable sets and relations and an attendant axiomatization of the modal semantics.
\end{abstract}

\section{Introduction}

In \emph{Outline of a Theory of Truth}, Kripke writes that
\begin{displayquote}[\cite{kripke1975outline}, p. 693]
    It has long been recognized that some of the intuitive trouble with Liar sentences is shared with such sentences as
[the truth-teller, i.e., ``This sentence is true''] which, though not paradoxical, yield no determinate truth conditions.
\end{displayquote}

So \emph{paradoxicality} is just one of the troubling features that self-referential sentences can exhibit. Others are \emph{ungroundedness} and \emph{non-intrinsicality}. One of the great successes of Kripke's theory is that it facilitates a classification of sentences according to these properties. Kripke informally describes groundedness and paradoxicality using modal locutions.
\begin{displayquote}[\cite{kripke1975outline}, p. 708, emphasis added\footnote{Except to the first instance of `could', which is emphasized in the original.}]
    \dots [The truth-teller] is ungrounded, but not paradoxical. This means that we \emph{could} consistently use the predicate `true' so as to give [the truth-teller] a truth value, though [we need] not do so.\dots [The liar] \emph{cannot} have a truth value\dots. We \emph{could} consistently use the word `true' so as to give a truth value to such a sentence as [the truth-teller] \dots. The same does not hold for the paradoxical sentences.
\end{displayquote}
Yet Kripke does not describe the intrinsic sentences using modal locutions. This is a striking contrast. Is it a mere coincidence that modal locutions appear in the description of the grounded and paradoxical sentences but not of the intrinsic sentences? Might it instead reflect some inherent discrepancy between the sets being defined?


The standard way of studying the relevant sets of sentences obviates this question. In particular, Kripke replaces the informal \emph{modal} glosses on paradoxicality and groundedness with official \emph{extensional} mathematical definitions. Nevertheless, the modal glosses are nice examples of modality occurring naturally in a logico-mathematical context, so  it is worth regimenting them in a way that retains their modal flavor. For the purposes of taking these modal definitions ``at face value,'' we introduce a modal language and an attendant interpretation $\mathbf{FIX}$. The modal language is propositional, but it contains structured atomic formulas (like in first-order logic) as opposed to propositional variables. In particular, there are atomic formulas of the form:
$$T(x) \quad \text{for ``$x$ is true.''}$$
$$F(x) \quad \text{for ``$x$ is false.''}$$ Kripke's informal modal definitions of groundedness and paradoxicality are straightforwardly formalizable in this setting.\footnote{It is is worth noting that there is other work mixing modality and Kripke's theory of truth, see \cite{stern2014modality, nicolai2021modal, cook2022outline}. However, none of these address the particular questions addressed here.}

The aforementioned interpretation $\mathbf{FIX}$ of the modal language interprets $\Box$ via unrestricted universal quantification over a set,\footnote{In particular, $\Box$ is interpreted as a universal quantifier over the set of fixed points of the Kripke jump. By `unrestricted' I just mean that the range of the quantifier is not constrained by an accessibility relation.} whence it satisfies all principles of the normal modal logic $\mathbf{S5}$. In fact, $\mathbf{FIX}$ satisfies various additional principles (note that $N(x)$ is an abbreviation for $\neg T(x) \wedge \neg F(x)$):
\begin{flalign*}
    \mathbf{Con} &\quad \neg \big(T(x) \wedge F(x)\big)\\
    \mathbf{Ground} &\quad  \big(\lozenge T(x) \wedge \lozenge F(x)\big) \to \lozenge N(x)\\
    \mathbf{Min_n} &\quad \big(\lozenge N(x_1) \wedge \dots \wedge \lozenge N(x_n)\big) \to \lozenge \big(N(x_1)\wedge\dots\wedge N(x_n)\big)
\end{flalign*}
Let $\mathbf{S5[Con, Ground,Min_n]}$ be the modal system that results from adjoining these axioms to $\mathbf{S5}$. The following fine-grained soundness and completeness theorem characterizes the modal theory of $\mathbf{FIX}$:
\begin{theorem*}
    For all formulas $\varphi$ with at most $n$ variables, the following are equivalent:
        \begin{enumerate}
            \item[(i)] $\mathbf{FIX}\Vdash \varphi$
            \item[(ii)] $\mathbf{S5[Con, Ground,Min_n]}\vdash \varphi$
        \end{enumerate}
\end{theorem*}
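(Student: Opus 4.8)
The plan is to factor the equivalence through a combinatorial notion of model. Call a nonempty $\Sigma \subseteq \{T,F,N\}^n$ an \emph{abstract model} if it satisfies (G) for every $i$, if some $v \in \Sigma$ has $v_i = T$ and some $v' \in \Sigma$ has $v'_i = F$, then some $v'' \in \Sigma$ has $v''_i = N$; and (M) if for every $i$ some $v \in \Sigma$ has $v_i = N$, then some $v^\ast \in \Sigma$ has $v^\ast_i = N$ for all $i$. Each abstract model $\Sigma$ carries an $\mathbf{S5}$-model on the cluster $\Sigma$ with universal accessibility, declaring $T(x_i)$ true at $v$ iff $v_i = T$ and $F(x_i)$ true at $v$ iff $v_i = F$. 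The theorem reduces to two claims: (a) $\mathbf{S5[Con, Ground,Min_n]}$ is sound and complete for the class of abstract models; and (b) an $n$-variable formula is valid in all abstract models iff it is $\mathbf{FIX}$-valid. Claim (b) in turn follows from the fact that the abstract models are, up to validity of $n$-variable formulas, exactly the patterns of truth values realized by $n$-tuples of sentences across the consistent fixed points of the Kripke jump.

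Soundness of the logic over abstract models and the ``only if'' half of (b) are the same computation run in two directions. Reading a $\mathbf{FIX}$-model off over the consistent fixed points with universal accessibility: $\mathbf{Con}$ is disjointness of extension and anti-extension; the $\mathbf{S5}$ axioms hold because every fixed point is accessible from every other; $\mathbf{Ground}$ holds because a sentence that is true at one fixed point and false at another is grounded in neither value, hence receives $N$ at the least fixed point; and $\mathbf{Min_n}$ holds because at the least fixed point every non-grounded sentence receives $N$, and any sentence that receives $N$ somewhere is non-grounded. The same observations say exactly that any realized pattern satisfies (G) and (M).

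For completeness of the logic I would argue contrapositively: suppose $\mathbf{S5[Con, Ground,Min_n]} \nvdash \varphi$ where the variables of $\varphi$ are among $x_1, \dots, x_n$. Substituting $x_1$ for every other variable throughout a hypothetical derivation shows $\varphi$ is already unprovable in the fragment whose only atoms are $T(x_1), F(x_1), \dots, T(x_n), F(x_n)$; treat this fragment as a normal modal logic $L_n$ with $2n$ propositional constants (closed under term-substitution, but not under substituting arbitrary formulas for the constants, since e.g.\ $\mathbf{Con}$ is not). Form the canonical model of $L_n$ and restrict to the generated submodel of a maximal consistent set containing $\neg\varphi$; since $L_n \supseteq \mathbf{S5}$ this is a single cluster with universal accessibility. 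Because $\mathbf{Con} \in L_n$, each world decides each $x_i$ as exactly one of $T, F, N$, so the worlds project onto a nonempty $\Sigma \subseteq \{T,F,N\}^n$ (automatically finite, $|\Sigma| \le 3^n$); because $\mathbf{Ground}, \mathbf{Min_n} \in L_n$, the truth lemma together with the universal accessibility relation forces $\Sigma$ to satisfy (G) and (M); and since $\varphi$ mentions only the atoms $T(x_i), F(x_i)$, the projection $w \mapsto \sigma(w)$ is a bisimulation onto the $\mathbf{S5}$-model induced by $\Sigma$, so $\varphi$ fails there.

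The remaining step—and the one I expect to be the main obstacle—is realizing an arbitrary abstract model $\Sigma$ in $\mathbf{FIX}$: I must produce sentences $a_1, \dots, a_n$ for which the profiles $\big(\mathrm{st}_P(a_1), \dots, \mathrm{st}_P(a_n)\big)$ of $P$ ranging over the consistent fixed points are precisely the elements of $\Sigma$. One first reduces to the case where no coordinate is constant on $\Sigma$: a coordinate pinned to $T$ (resp.\ $F$) gets a true (resp.\ false) arithmetical sentence, and a coordinate pinned to $N$ gets a Liar sentence; (M) then guarantees $(N, \dots, N) \in \Sigma$ and every coordinate takes both $N$ and some classical value. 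In that case one uses the diagonal lemma to build $a_1, \dots, a_n$ from auxiliary self-referential ``selector'' sentences, arranged so that the Kripke jump, restricted to the relevant sentences, has exactly the fixed points coded by the elements of $\Sigma$. Conditions (G) and (M) are precisely the obstructions that make this possible: (G) is what lets each coordinate realize its non-$N$ value(s) without thereby being forced to $N$ everywhere outside the least fixed point, and (M) is what makes the profiles mutually co-realizable at the least fixed point. Pinning down the fixed-point behavior of such a tuple on the nose is the technical heart of the argument; the logic side is, by contrast, a routine canonical-model calculation once the three axioms have been isolated.
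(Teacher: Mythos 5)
Your reduction to a class of ``abstract models'' is the right shape (it parallels the paper's reduction, via Carnap-style normal forms, to subsets of the $3^n$ tensor satisfying what the paper calls the prime conditions), but your condition (M) is too weak, and this breaks both of your claims (a) and (b) once $n\geq 3$. Because $\mathbf{S5[Con,Ground,Min_n]}$ is closed under identifying variables, it proves every $\mathbf{Min_k}$ for $k\leq n$; correspondingly, the patterns actually realized in $\mathbf{FIX}$ satisfy the \emph{local} version of (M): for every nonempty set $J$ of coordinates, if each coordinate in $J$ takes the value $N$ at some fixed point, then there is a single fixed point (e.g.\ the minimum one) at which all coordinates in $J$ are simultaneously $N$. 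Your (M) only imposes this for $J=\{1,\dots,n\}$. Concretely, take $n=3$ and $\Sigma=\{(N,T,T),(T,N,T)\}$: this satisfies your (G) and (M) (the latter vacuously, since coordinate $3$ never takes $N$), yet its induced $\mathbf{S5}$-model refutes $(\lozenge N(x_1)\wedge\lozenge N(x_2))\to\lozenge(N(x_1)\wedge N(x_2))$, which is both provable in $\mathbf{S5[Con,Ground,Min_3]}$ and $\mathbf{FIX}$-valid, and $\Sigma$ is not realizable by any sentences (both $a_1$ and $a_2$ would be ungrounded, hence simultaneously $N$ at the minimum fixed point). So your soundness claim over abstract models fails, your claimed match between abstract models and realized patterns fails, and your reduction step ``(M) then guarantees $(N,\dots,N)\in\Sigma$'' after splitting off constant coordinates is exactly the missing local condition. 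The fix is to replace (M) by the paper's prime condition (3); your canonical-model argument would then in fact deliver a $\Sigma$ satisfying it, since the theorems $\mathbf{Min_k}$ live in the fragment you quotient.

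The second, and larger, gap is that the realization step --- producing sentences whose profile across the fixed points is \emph{exactly} a given admissible $\Sigma$ --- is only gestured at, and you yourself flag it as the main obstacle. This is the paper's main technical lemma: the case $(3,\dots,3)\in\mathcal{S}$ is handled by fixing infinitely many logically independent truth-tellers $\tau_1,\dots,\tau_k$ with $2^k>3^n$, enumerating the conjunctions $C_1,\dots,C_{2^k}$ of the form $\pm\tau_1\wedge\dots\wedge\pm\tau_k$, and setting each $a_i$ to be a conjunction of conditionals $C_j\to\theta$ with $\theta\in\{0{=}0,\,0{=}1,\,\lambda\}$ chosen according to the cell of $\mathcal{S}$ coded by $j$ (with $C_j\to\lambda$ for $j$ outside $\mathcal{S}$); the case $(3,\dots,3)\notin\mathcal{S}$ reduces by induction on $n$ after observing that the prime conditions then force $\mathcal{S}$ into a single layer of some slice. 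Without this construction (or a worked-out substitute for your ``selector sentences''), the hard direction of the theorem is not established, so the proposal as it stands is an outline with a wrong combinatorial invariant rather than a proof.
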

This completeness theorem emerges from an exact characterization of the sets (and, in general, the $n$-ary relations) that are modally definable over $\mathbf{FIX}$. From this characterization, we extract the following answer to our motivating question:
\begin{theorem*}
The set of intrinsic sentences is not modally definable.
\end{theorem*}
Thus, it is not a mere coincidence that Kripke does not lapse into modal language when defining intrinsicality.


Here is the plan for the rest of the paper. In \textsection \ref{ktt} we review the relevant features of Kripke's theory of truth. In \textsection \ref{ms} we introduce the modal language, semantics, and axiom systems that will concern us throughout the paper. In \textsection \ref{svd} we characterize the sets that are definable using modal formulas with a single variable. In \textsection \ref{mvd} we characterize the relations that are definable using modal formulas with multiple variable.



\section{Kripke's Theory of Truth}\label{ktt}

In this section we review the relevant aspects of Kripke's theory of truth \parencite{kripke1975outline}. Kripke presents a flexible framework and emphasizes that it can be developed in various specific ways. We will focus on a specific implementation that Kripke highlights, namely, the strong Kleene fixed point construction.

Let $\mathcal{L}$ be the first-order language of arithmetic. Let $\mathcal{L}_T$ be the language of arithmetic augmented with the unary predicate $\mathsf{True}(x)$. Some presentations also include a predicate $\mathsf{False}(x)$, but one may interpret falsity as truth-of-negation, whence the second predicate is strictly unnecessary. As usual, $\ulcorner\varphi\urcorner$ is the G\"odel number of the $\mathcal{L}_T$ sentence $\varphi$ according to some standard G\"odel numbering.

\subsection{Strong Kleene Logic}

Kripke defines his interpretations of $\mathcal{L}_T$ using the trivalent logic $\mathsf{K3}$, also known as the strong Kleene logic. According to $\mathsf{K3}$, a structure $w$ can stand in one of three semantic relations with a formula $\varphi$: 
\begin{itemize}
\item $w\vDash_{\mathsf{K3}}\varphi$, i.e., $\varphi$ is \textsc{true} in $w$
\item $w\Dashv_{\mathsf{K3}}\varphi$, i.e., $\varphi$ is \textsc{false} in $w$
\item $w\uparrow_{\mathsf{K3}}\varphi$ i.e., $\varphi$ is \textsc{neither} in $w$
\end{itemize}

The semantics of conjunction is given by the following clauses:
\begin{itemize}
\item $w\vDash_{\mathsf{K3}}\varphi\wedge\psi$ if $w\vDash_{\mathsf{K3}}\varphi$ and $w\vDash_{\mathsf{K3}}\varphi$
\item $w\Dashv_{\mathsf{K3}}\varphi\wedge\psi$ if $w\Dashv_{\mathsf{K3}}\varphi$ or $w\Dashv_{\mathsf{K3}}\psi$
\item $w\uparrow_{\mathsf{K3}}\varphi$ otherwise.
\end{itemize}

The semantics of negation is given by the following clauses:
\begin{itemize}
\item $w\vDash_{\mathsf{K3}}\neg\varphi$ if $w\Dashv_{\mathsf{K3}}\varphi$
\item $w\Dashv_{\mathsf{K3}}\neg\varphi$ if $w\vDash_{\mathsf{K3}}\varphi$
\item $w\uparrow_{\mathsf{K3}}\neg\varphi$ otherwise.
\end{itemize}

The semantics of the universal quantifier is given by the following clauses:
\begin{itemize}
\item $w\vDash_{\mathsf{K3}}\forall x \; \varphi(x)$ if for every $d\in w$, $w\vDash_{\mathsf{K3}}\varphi(d)$.
\item $w\Dashv_{\mathsf{K3}}\forall x \; \varphi(x)$ if for some $d\in w$, $w\Dashv_{\mathsf{K3}}\varphi(d)$.
\item $w\uparrow_{\mathsf{K3}}\forall x \; \varphi(x)$ otherwise.
\end{itemize}

\subsection{Classification of Sentences}

Kripke introduces a large collection of 3-valued (\textsc{true}, \textsc{false}, \textsc{neither}) models for $\mathcal{L}_T$. These 3-valued models are typically called ``fixed points,'' since Kripke defines them as the fixed points of a certain function (namely, the Kripke jump). The details of this function are not important for us, but the ``fixed points'' terminology is ubiquitous, so we will continue to use it. For work on the structure of these fixed points see \cite{cantini1989notes, fitting1986notes}.

\begin{remark}\label{consistency-remark}
    Each fixed point $w$ has the following properties:
    \begin{enumerate}
        \item For each $\varphi$, $\varphi$ and $\mathsf{True}(\ulcorner\varphi\urcorner)$ have the same semantic value in $w$.
        \item $w$ is consistent, i.e., there is no $\varphi$ such that $w\vDash_{\mathsf{K3}}\varphi$ and $w\vDash_{\mathsf{K3}}\neg \varphi$.\footnote{Like Kripke, we restrict our attention to consistent fixed points. It is now common to study all fixed points without this restriction.}
        \item Hence, there is no $\varphi$ such that $w\vDash_{\mathsf{K3}}\mathsf{True}(\ulcorner\varphi\urcorner) \wedge \mathsf{True}(\ulcorner\neg \varphi\urcorner)$
    \end{enumerate}
\end{remark}

For any formula $\varphi(x)$ in the language of $\mathcal{L}_T$, a standard self-reference construction yields an $\mathcal{L}_T$ sentence $\psi$ which ``says'' $\varphi(\ulcorner\psi\urcorner)$. In particular, $\psi$ and $\varphi(\ulcorner\psi\urcorner)$ have the same semantic value in each fixed point. For instance, there are $\mathcal{L}_T$ sentences $\lambda,\tau,\gamma$ such that:
\begin{itemize}
    \item $\lambda$ says $\neg \mathsf{True}(\ulcorner\lambda\urcorner)$.
    \item $\tau$ says $ \mathsf{True}(\ulcorner\tau\urcorner)$.
    \item $\gamma$ says $ \mathsf{True}(\ulcorner\gamma\urcorner)\vee\neg\mathsf{True}(\ulcorner\gamma\urcorner)$.
\end{itemize}
In fact, there are many such sentences. We will sometimes appeal to this fact, which is well-known in the theory of truth literature:
\begin{remark}\label{truth-teller}
    There are infinitely many logically independent truth-teller sentences. Hence, there is an infinite collection $T$ of sentences such that for any sequence $\tau_1,\dots,\tau_n$ of sentences from $T$ and any assignment of \textsc{true}, \textsc{false}, and \textsc{neither} to $\tau_1,\dots,\tau_n$, there is a fixed point realizing exactly that truth-value assignment.
\end{remark}

\begin{definition}
    A sentence is \emph{grounded} just in case (i) it has the value \textsc{true} in all models or (ii) it has the value \textsc{false} in all models.
\end{definition}

\begin{remark}\label{min-fix-point}
    There is a \emph{minimum} fixed point in which all ungrounded sentences have the value \textsc{neither}. Hence, a sentence is \emph{ungrounded} just in case it has the value \textsc{neither} in some fixed point. The following claims follow:
    \begin{enumerate}
        \item If a sentence has value \textsc{true} in one fixed point and \textsc{false} in another, then it has value \textsc{neither} in the minimum fixed point.
        \item If $\varphi_1,\dots,\varphi_n$ are ungrounded, then they all have value \textsc{neither} in the minimum fixed point.
    \end{enumerate}
\end{remark}

The liar $\lambda$ , truth-teller $\tau$, and tautology-teller $\gamma$ are all ungrounded, so they all have value \textsc{neither} in the minimum model.

\begin{definition}
    A sentence is \emph{paradoxical} just in case it has value \textsc{neither} in all models.
\end{definition}

The liar $\lambda$ is paradoxical. But the truth-teller $\tau$ and tautology-teller $\gamma$ are not paradoxical:
\begin{itemize}
    \item $\tau$ has value \textsc{true} in some models and value \textsc{false} in others.
    \item $\gamma$ has value \textsc{true} in some models and does not have the value \textsc{false} in any models.
\end{itemize}

Though neither $\tau$ nor $\gamma$ is paradoxical, it is worth noting that $\gamma$ alone cannot take on both of the binary truth values. We introduce the following definition to discuss this property of $\gamma$:
\begin{definition}
    A sentence is \emph{inevitable} if it has one of the binary truth-values in some fixed point and the other binary truth-value in no fixed point.
\end{definition}
Any grounded sentence is inevitable. There are also ungrounded inevitable sentences. For instance, if $\tau$ is a truth-teller, then the sentence $\tau\vee\neg\tau$ is inevitable (inevitably true) as is $\tau\wedge\neg\tau$ (inevitably false). 

\emph{Informally}, the intrinsic sentences are those inevitable sentences whose truth/falsity does not depend on any \emph{arbitrary} sentences. Here is a more precise definition:

\begin{definition}
    A fixed point $w$ is \emph{intrinsic} if for every fixed point $v$, 
    $$\{\varphi \mid \varphi \text{ is \textsc{true} in }w\}\cup \{\varphi \mid \varphi \text{ is \textsc{true} in }v\}$$ 
    is consistent. A sentence $\varphi$ is \emph{intrinsic} if it is \textsc{true} or \textsc{false} in an intrinsic fixed point.
\end{definition}

Note that inevitability and intrinsicality come apart. The tautology-teller $\gamma$ is both intrinsic and inevitable. By contrast, $\tau\vee\neg\tau$ is inevitable but not intrinsic. The issue is that any fixed point which makes $\tau\vee\neg\tau$ \textsc{true} must either make $\tau$ \textsc{true} or $\neg\tau$ \textsc{true}. So $\tau\vee\neg\tau$ can only be made \textsc{true} by making a binary truth-value assignment that is incompatible with other possible binary truth-value assignments.


\section{Modal Systems}\label{ms}

In this section we introduce our modal framework, including the language, semantics, and axiom systems. Let's start with a presentation of the modal language. In what follows:
\begin{itemize}
    \item $\mathcal{X}$ is a countably infinite set of variables.
    \item $T$ and $F$ are one-place predicate symbols.
    \item $\neg$, $\wedge$, and $\Box$ are sentential connectives.
\end{itemize}

\begin{definition}
    The formulas of $\mathcal{L}_\Box$ are defined as follows:
\[ \varphi::= T(x) \mid F(x) \mid \neg\varphi\mid (\varphi\wedge\varphi) \mid \Box \; \varphi  \]
\end{definition}
That is, a formula is a predicate applied to a variable, a negation of a formula, a conjunction of formulas, or a box appended to a formula. Though $\vee,\to,\leftrightarrow,\lozenge$ are not officially part of the language we sometimes use these symbols as abbreviations in the standard way. Note that the language $\mathcal{L}_\Box$ neither contains quantifiers nor propositional variables.

For the rest of this paper we will be working with this signature, i.e., with predicates $T$ and $F$. For instance, when we discuss the modal logic $\mathbf{S5}$, we mean the modal logic $\mathbf{S5}$ \emph{in this signature}.

\begin{definition}
    $\mathbf{S5}$ is the classical modal logic containing all instances of the following axioms:
\begin{flalign*}
    \mathbf{K} &\quad \Box(A\to B)\to (\Box A\to \Box B)\\
    \mathbf{T} &\quad \Box A \to A\\
    \mathbf{5} &\quad \lozenge A \to \Box\lozenge A
\end{flalign*}
and closed under the following rule:
\begin{flalign*}
    \mathbf{RN} &\quad \text{If $A$ is a theorem, so is $\Box A$}.
\end{flalign*}
\end{definition}

We will now introduce models for interpreting the formal system $\mathbf{S5}$.

\begin{definition}
    For a set $W$, a \emph{variable assignment} $V$ is a pair of functions $V_1,V_2:\mathcal{X}\to \mathcal{P}(W)$.
\end{definition}

\begin{definition}\label{model}
Given $w\in W$ and variable assignment $V$, we define $\Vdash_V$ as follows:
\begin{itemize}
\item $W,w\Vdash_V T(x)$ if $w\in V_1(x)$.
\item $W,w\Vdash_V F(x)$ if $w\in V_2(x)$.
\item $W,w\Vdash_V\neg\varphi$ if $W,w\nVdash_V\varphi$.
\item $W,w\Vdash_V\varphi\wedge\psi$ if $W,w\Vdash_V\varphi$ and $W,w\Vdash_V\psi$.
\item $W,w\Vdash_V\Box\;\varphi$ if for all $v\in W$, $W,v\Vdash_V\varphi$.
\end{itemize}
We say that $W\Vdash_V \varphi$ if for all $w\in W$, $W,w\Vdash_V \varphi$.
\end{definition}

We will be particularly interested in a particular $\mathbf{S5}$ structure. 
\begin{definition}
    Let $\mathbf{FIX}$ be the set of Kripke's fixed points, i.e., the set of Kripke's 3-valued $\mathcal{L}_T$ models. An \emph{$\mathcal{L}_T$-realization} is a function $\star: \mathcal{X} \to \mathcal{L}_T$, i.e., a mapping from variables to $\mathcal{L}_T$ formulas. Each $\mathcal{L}_T$-realization $\star$ determines a variable assignment $V^\star$:
\begin{enumerate}
    \item[(i)] $V^\star_1(x)$ is the set of worlds in which $x^\star$ is \textsc{true}.
    \item[(ii)] $V^\star_2(x)$ is the set of worlds in which $x^\star$ is \textsc{false}.
\end{enumerate}

 $\mathbf{FIX}\Vdash \varphi$ just in case $\mathbf{FIX}\Vdash_{V^\star} \varphi$ for every $\mathcal{L}_T$-realization $\star$.
\end{definition}

For any $\star$, $\mathbf{FIX}\Vdash_{V^\star}\mathbf{S5}$. In fact, $\mathbf{FIX}$ validates other principles that can be stated in the modal language $\mathcal{L}_\Box$.

\begin{definition}
    $\mathbf{S5[Con]}$ extends $\mathbf{S5}$ with the following axiom:
$$\mathbf{Con} \quad \neg \big(T(x)\wedge F(x)\big).$$
\end{definition}

In what follows, $N(x)$ is an abbreviation for the formula:
 $$\neg T(x) \wedge \neg F(x).$$

 \begin{definition}
$\mathbf{S5[Con,Ground]}$ extends $\mathbf{S5[Con]}$ with the following axiom:
$$\mathbf{Ground} \quad  (\lozenge T(x) \wedge \lozenge F(x)) \to \lozenge N(x) .$$ \end{definition}

We will introduce more axioms as they are needed. For now we note the following.

 \begin{proposition}\label{sound-1}
     For any $\star$, $\mathbf{FIX}\Vdash_{V^\star} \mathbf{S5[Con,Ground]}$. 
 \end{proposition}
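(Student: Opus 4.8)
The plan is to verify, for an arbitrary $\mathcal{L}_T$-realization $\star$, that each axiom schema of $\mathbf{S5[Con,Ground]}$ holds in $\mathbf{FIX}$ under $V^\star$, and that the rules $\mathbf{RN}$ and (propositional) modus ponens preserve validity. Validity of the $\mathbf{S5}$ axioms $\mathbf{K}$, $\mathbf{T}$, $\mathbf{5}$ and closure under $\mathbf{RN}$ is immediate from Definition~\ref{model}, since $\mathbf{FIX}$ is a set-based (total, universal-relation) Kripke frame: $\Box$ quantifies over \emph{all} worlds of $\mathbf{FIX}$, so the accessibility relation is an equivalence relation and the frame validates $\mathbf{S5}$. (This is the fact already noted in the paper as ``$\mathbf{FIX}\Vdash_{V^\star}\mathbf{S5}$.'') So the real content is the two extra axioms.

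For $\mathbf{Con}$, fix a variable $x$ and a world $w\in\mathbf{FIX}$. By definition of $V^\star$, $w\Vdash_{V^\star} T(x)$ means $x^\star$ is \textsc{true} in $w$, and $w\Vdash_{V^\star} F(x)$ means $x^\star$ is \textsc{false} in $w$. A single fixed point cannot assign both \textsc{true} and \textsc{false} to one sentence -- this is exactly the consistency property recorded in Remark~\ref{consistency-remark}(2) (equivalently, since falsity is truth-of-negation, part (3) of that remark). Hence $w\nVdash_{V^\star} T(x)\wedge F(x)$, i.e. $w\Vdash_{V^\star}\neg(T(x)\wedge F(x))$. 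As $w$ was arbitrary, $\mathbf{FIX}\Vdash_{V^\star}\mathbf{Con}$.

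For $\mathbf{Ground}$, fix $x$ and $w\in\mathbf{FIX}$, and suppose $w\Vdash_{V^\star}\lozenge T(x)\wedge\lozenge F(x)$. Unwinding the semantics of $\lozenge$ over the universal frame $\mathbf{FIX}$: there is a fixed point in which $x^\star$ is \textsc{true} and a (possibly different) fixed point in which $x^\star$ is \textsc{false}. Then Remark~\ref{min-fix-point}(1) applies: a sentence that is \textsc{true} in one fixed point and \textsc{false} in another has value \textsc{neither} in the minimum fixed point. Call that minimum fixed point $m$; then $x^\star$ is \textsc{neither} in $m$, so $m\nVdash_{V^\star}T(x)$ and $m\nVdash_{V^\star}F(x)$, whence $m\Vdash_{V^\star}N(x)$, and therefore $w\Vdash_{V^\star}\lozenge N(x)$. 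Since $w$ was arbitrary, $\mathbf{FIX}\Vdash_{V^\star}\mathbf{Ground}$. Combining the three pieces and noting closure under the logical rules gives $\mathbf{FIX}\Vdash_{V^\star}\mathbf{S5[Con,Ground]}$ for every $\star$.

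I do not expect any genuine obstacle here: the proposition is a routine soundness check, and each axiom corresponds precisely to one of the structural facts about fixed points already isolated in Section~2 ($\mathbf{Con}\leftrightarrow$ consistency of fixed points, $\mathbf{Ground}\leftrightarrow$ existence and behaviour of the minimum fixed point). The only point requiring the slightest care is bookkeeping the difference between a formula holding \emph{at a world} versus \emph{throughout} $\mathbf{FIX}$, and remembering that because the frame is universal, $\Box\varphi$ and $\lozenge\varphi$ have truth values that do not depend on the evaluation world -- which is what lets the ``for all $w$'' quantifier at the end go through trivially.
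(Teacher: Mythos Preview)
Your proof is correct and takes essentially the same approach as the paper: the paper's proof consists of a single line citing Remark~\ref{consistency-remark} (for $\mathbf{Con}$) and Remark~\ref{min-fix-point} (for $\mathbf{Ground}$), and you have simply unpacked those citations in full detail, together with the standard observation that a universal-relation frame validates $\mathbf{S5}$.
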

 \begin{proof}
     By Remark \ref{consistency-remark} and Remark \ref{min-fix-point}.
 \end{proof}


\section{Single Variable Definability}\label{svd}

The motivations discussed in the introduction concerned definability of \emph{sets}, such as the set of grounded sentences. To study definability of sets, it suffices to focus on 1-formulas (i.e., formulas with a single variable). Accordingly, in this section we will limit our attention to 1-formulas. We will turn to $n$-formulas for $n>1$ in subsequent sections. Characterizing the definable sets is much easier than characterizing the definable $n$-ary relations, so this section will also serve as a warm up for the later sections.

\begin{definition}
An $\mathcal{L}_\Box$ formula $\varphi(x)$ \emph{defines} $\mathcal{A}$ over $\mathbf{FIX}$ just in case for every $\mathcal{L}_T$-realization $\star$:
$$\mathbf{FIX}\Vdash_{V^\star} \varphi(x) \Longleftrightarrow  x^\star \in \mathcal{A}.$$
\end{definition}

Let's provide some examples:
\begin{flalign*}
    x\text{ is grounded }&:= \Box\; T(x) \vee \Box \; F(x)\\
    x\text{ is paradoxical }&:= \neg \lozenge T(x) \wedge \neg \lozenge F(x)\\
    x\text{ is inevitable }&:= \big(\lozenge T(x) \wedge\neg \lozenge F(x)\big) \vee \big(\neg \lozenge T(x) \wedge \lozenge F(x)\big)
\end{flalign*}

One conspicuous absence from the list is \emph{intrinsicality}. This is not an oversight. One goal of this section is to show that the intrinsic sentences have no modal definition.

\subsection{Normal Forms}

One of our main tools will be a normal form theorem due to \cite{carnap1946modalities}. Let's begin by drawing a distinction between two kinds of formulas.

\begin{definition}
    An \emph{extensional} formula is one without any modal operators. An \emph{intensional} formula is one all of whose variables occurs only in the scope of a modal operator.
\end{definition}

Note that the distinction between extensional and intensional formulas is exclusive but not exhaustive.

\begin{definition}
    Let $\Gamma$ be a set of formulas and $T$ a modal system. A formula $\varphi$ is \emph{$\Gamma$-maximal} for $T$ if each of the following holds:
    \begin{enumerate}
        \item $T+\varphi$ is consistent
        \item $\varphi \in \Gamma$
        \item For every $\psi\in \Gamma$, one of the following holds:
    \begin{enumerate}
        \item $T\vdash \varphi \to \psi$
        \item $T \vdash \varphi\to \neg \psi$
    \end{enumerate}
    \end{enumerate}
\end{definition}

The following notion is inspired by the model-theoretic notion of an isolated type:
\begin{definition}
    The \emph{extensional 1-isolators} for $T$ are the $\Gamma$-maximal formulas for $T$ where $\Gamma$ is the set of extensional 1-formulas.
\end{definition}

\begin{proposition}
    The extensional 1-isolators for $\mathbf{S5}$:
\begin{enumerate}
    \item[(i)] $T(x) \wedge F(x)$
    \item[(ii)] $T(x) \wedge \neg F(x)$
    \item[(iii)] $\neg T(x) \wedge F(x)$
    \item[(iv)] $\neg T(x) \wedge \neg F(x)$
\end{enumerate}
\end{proposition}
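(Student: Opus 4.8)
The plan is to show that the four listed formulas are exactly the $\Gamma$-maximal formulas for $\mathbf{S5}$ when $\Gamma$ is the set of extensional $1$-formulas. First I would observe that an extensional $1$-formula is, up to propositional equivalence, just a Boolean combination of the two atoms $T(x)$ and $F(x)$, so there are essentially only finitely many of them: each is equivalent to a disjunction of some subset of the four conjunctions $\pm T(x)\wedge \pm F(x)$ listed in (i)--(iv). These four conjunctions are the atoms of the finite Boolean algebra of extensional $1$-formulas modulo propositional equivalence.

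Next I would verify the three defining conditions for each of (i)--(iv). For condition (3): given any extensional $1$-formula $\psi$, write $\psi$ in disjunctive normal form as a disjunction of some of the four atoms; then for each atom $\alpha$, either $\alpha$ appears as a disjunct, in which case $\vdash \alpha\to\psi$ propositionally (hence $\mathbf{S5}\vdash\alpha\to\psi$), or it does not, in which case $\vdash\alpha\to\neg\psi$. So every atom satisfies the trichotomy-style condition (3). Condition (2) is immediate since each atom is an extensional $1$-formula. For condition (1), I need $\mathbf{S5}+\alpha$ to be consistent for each of the four $\alpha$: this is where the choice of base logic matters. Since $\mathbf{S5}$ here has no axioms constraining $T$ and $F$ beyond pure modal logic (in particular it does \emph{not} include $\mathbf{Con}$), one can build an $\mathbf{S5}$ model — e.g.\ a single reflexive world — realizing any desired pair of truth values for $T(x)$ and $F(x)$; hence all four atoms are $\mathbf{S5}$-consistent. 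I would spell this out by exhibiting, for each atom, a one-point model with the appropriate valuation.

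Finally I would argue that \emph{nothing else} is $\Gamma$-maximal. Suppose $\varphi$ is an extensional $1$-formula that is $\mathbf{S5}$-consistent and $\Gamma$-maximal. By condition (3) applied with $\psi$ ranging over the atoms (i)--(iv), for each atom $\alpha$ either $\mathbf{S5}\vdash\varphi\to\alpha$ or $\mathbf{S5}\vdash\varphi\to\neg\alpha$; it cannot decide every atom negatively, since $\varphi$ is consistent and propositionally implies the disjunction of all four atoms, so $\mathbf{S5}\vdash\varphi\to\alpha$ for at least one atom $\alpha$. On the other hand $\varphi$ cannot propositionally imply two distinct atoms (their conjunction is unsatisfiable), so exactly one atom $\alpha_0$ has $\mathbf{S5}\vdash\varphi\to\alpha_0$. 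Then $\mathbf{S5}\vdash\varphi\to\neg\alpha$ for the other three, so $\mathbf{S5}\vdash\varphi\to\alpha_0$ and $\mathbf{S5}\vdash\alpha_0\to\varphi$ (the latter because $\varphi$, being the disjunction of the atoms it implies, is propositionally equivalent to $\alpha_0$), i.e.\ $\varphi$ is $\mathbf{S5}$-equivalent to one of (i)--(iv).

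The only mildly delicate point — and the one I would be most careful about — is condition (1), the consistency of all four atoms with $\mathbf{S5}$: one must remember that at this stage we are working in plain $\mathbf{S5}$ (signature with $T,F$ but no truth-theoretic axioms), so the ``impossible'' combination $T(x)\wedge F(x)$ is in fact consistent here, which is exactly why (i) makes the list even though it will later be excluded once $\mathbf{Con}$ is adjoined. Everything else is a routine unwinding of disjunctive normal form for Boolean combinations of two atoms together with the fact that $\mathbf{S5}$ extends classical propositional logic.
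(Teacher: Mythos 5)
Your argument is correct, and it matches the paper's (implicit) reasoning: the paper states this proposition without proof, treating it as the routine observation that the four conjunctions are the atoms of the Boolean algebra of extensional $1$-formulas and that all four are consistent in bare $\mathbf{S5}$ --- exactly the point you rightly flag, since nothing in $\mathbf{S5}$ without $\mathbf{Con}$ rules out $T(x)\wedge F(x)$, which is why (i) appears here and only disappears once $\mathbf{Con}$ is adjoined. The one step worth tightening is the final inference $\mathbf{S5}\vdash\alpha_0\to\varphi$: from $\mathbf{S5}\vdash\varphi\to\alpha_0$ alone you cannot yet read off that $\varphi$ is propositionally equivalent to $\alpha_0$; you should note that if $\varphi$'s disjunctive normal form contained a second atom $\beta\neq\alpha_0$, then $\mathbf{S5}\vdash\beta\to\alpha_0$ would force $\mathbf{S5}\vdash\neg\beta$, contradicting the consistency of $\beta$ that your one-point models establish --- so the material to close this gap is already in your write-up and just needs to be invoked explicitly.
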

 
$\mathbf{S5[Con]}$ refutes (i) and makes the additional conjuncts in (ii) and (iii) redundant. Hence, we have the following:

\begin{proposition}
    The extensional 1-isolators for $\mathbf{S5[Con]}$ are:
$$T(x), F(x), N(x)$$
\end{proposition}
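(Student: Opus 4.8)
The plan is to derive this from the previous proposition (the extensional 1-isolators for $\mathbf{S5}$ are the four conjunctions $T(x)\wedge F(x)$, $T(x)\wedge\neg F(x)$, $\neg T(x)\wedge F(x)$, $\neg T(x)\wedge\neg F(x)$) by passing from $\mathbf{S5}$ to $\mathbf{S5[Con]}$ and checking what happens to each of the four candidates. The key observation is that $\Gamma$-maximality is stated relative to a fixed system $T$, and strengthening $T$ from $\mathbf{S5}$ to $\mathbf{S5[Con]}$ can only (a) kill a formula by making $T+\varphi$ inconsistent, or (b) collapse previously-distinct $\Gamma$-maximal formulas into provable equivalence. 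So I would argue that the extensional 1-isolators for $\mathbf{S5[Con]}$ are exactly the $\mathbf{S5[Con]}$-consistent members of the old list, considered up to $\mathbf{S5[Con]}$-provable equivalence.

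First I would show $T(x)\wedge F(x)$ is no longer an isolator: by definition of $\mathbf{Con}$, $\mathbf{S5[Con]}\vdash\neg(T(x)\wedge F(x))$, so $\mathbf{S5[Con]}+\big(T(x)\wedge F(x)\big)$ is inconsistent, violating clause (1). Next, for the remaining three, I would note that $\mathbf{S5[Con]}\vdash T(x)\to\neg F(x)$ and $\mathbf{S5[Con]}\vdash F(x)\to\neg T(x)$ (both immediate from $\mathbf{Con}$ by propositional logic), so $\mathbf{S5[Con]}$ proves $T(x)\leftrightarrow\big(T(x)\wedge\neg F(x)\big)$ and $F(x)\leftrightarrow\big(\neg T(x)\wedge F(x)\big)$; thus $T(x)$, $F(x)$, and $N(x)$ are $\mathbf{S5[Con]}$-equivalent to (ii), (iii), (iv) respectively. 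Since $\Gamma$-maximality (for $\Gamma$ the extensional 1-formulas, which is closed under the relevant propositional manipulations) is preserved under $T$-provable equivalence, and each of (ii), (iii), (iv) is $\mathbf{S5[Con]}$-consistent, it follows that $T(x)$, $F(x)$, $N(x)$ are $\Gamma$-maximal for $\mathbf{S5[Con]}$.

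To finish I would argue there are no \emph{other} extensional 1-isolators for $\mathbf{S5[Con]}$. Since $\mathbf{S5[Con]}$ extends $\mathbf{S5}$, any $\Gamma$-maximal formula $\varphi$ for $\mathbf{S5[Con]}$ still satisfies clause (3) for $\mathbf{S5}$ (provability in $\mathbf{S5}$ implies provability in $\mathbf{S5[Con]}$, so the relevant implications transfer the other way is what we need — here I should be slightly careful). The cleaner route is: every extensional 1-formula is, by truth-table reasoning over the two atoms $T(x),F(x)$, propositionally equivalent (hence $\mathbf{S5}$-equivalent, hence $\mathbf{S5[Con]}$-equivalent) to a disjunction of the four conjunctions (i)--(iv). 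A $\Gamma$-maximal formula for $\mathbf{S5[Con]}$ must, for each such disjunct $D$, prove either $\varphi\to D$ or $\varphi\to\neg D$; consistency plus this decisiveness forces $\varphi$ to be $\mathbf{S5[Con]}$-equivalent to exactly one of the $\mathbf{S5[Con]}$-consistent conjunctions, i.e.\ to one of $T(x), F(x), N(x)$.

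The main obstacle is mostly bookkeeping rather than mathematics: being careful that $\Gamma$-maximality is invariant under $T$-provable equivalence and that the argument for ``no others'' really uses only finitely many extensional 1-formulas up to equivalence (there are $2^4$ of them over the two atoms). Given how the authors phrase it (``$\mathbf{S5[Con]}$ refutes (i) and makes the additional conjuncts in (ii) and (iii) redundant''), I expect the intended proof to be a one-line appeal to exactly these two facts, leaving the invariance-under-equivalence point implicit.
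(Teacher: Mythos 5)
Your proof is correct and matches the paper's approach: the paper offers no proof beyond the preceding sentence (``$\mathbf{S5[Con]}$ refutes (i) and makes the additional conjuncts in (ii) and (iii) redundant''), and your write-up simply fills in exactly those two facts, together with the routine exhaustiveness check via the propositional normal form over the atoms $T(x),F(x)$ and the invariance of $\Gamma$-maximality under provable equivalence. Nothing in your argument deviates from the intended one; it is just more explicit.
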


There is an analogous notion of intensional 1-isolators:
\begin{definition}
    The \emph{intensional 1-isolators} for $T$ are the $\Gamma$-maximal formulas for $T$ where $\Gamma$ is the set of intensional 1-formulas.
\end{definition}

Let's restrict our attention to $\mathbf{S5[Con]}$ and its extensions. Here is how we characterize the intensional 1-isolators of such a system. We begin by considering all formulas of the form:
$$ ^{+}_{\neg} \lozenge T(x) \wedge ^{+}_{\neg} \lozenge F(x) \wedge ^{+}_{\neg} \lozenge N(x)$$
That is, for each of the extensional 1-isolators, we choose whether it is possible or not and add that choice as a conjunct. Call these the \emph{intensional pre-1-isolators}. To isolate the intensional 1-isolators we must figure out which of the intensional pre-1-isolators are consistent.

$\mathbf{S5[Con]}$ refutes exactly one of the intensional pre-1-isolators.
$$\mathbf{S5[Con]}\vdash \neg \big(\neg \lozenge T(x) \wedge \neg \lozenge F(x) \wedge \neg \lozenge N(x)\big)$$
$\mathbf{S5[Con, Ground]}$ refutes one more in addition:
$$\mathbf{S5[Con, Ground]}\vdash \neg \big(\lozenge T(x) \wedge \lozenge F(x) \wedge \neg \lozenge N(x)\big)$$

\begin{definition}
    A \emph{1-isolator} for $T$ is a $T$-consistent conjunction $\varphi\wedge \psi$ where $\varphi$ is an extensional 1-isolator and $\psi$ is an intensional 1-isolator.
\end{definition}

We are now ready to state Carnap's theorem. To make sense of Carnap's theorem, recall the convention that an empty disjunction of formulas is just a contradiction.

\begin{theorem}[essentially \cite{carnap1946modalities}]\label{carnap}
    For $T$ extending $\mathbf{S5}$:
    \begin{enumerate}
        \item Every extensional 1-formula is $T$ equivalent to a disjunction of extensional 1-isolators for $T$.
        \item Every intensional 1-formula is $T$ equivalent to a disjunction of intensional 1-isolators for $T$.
        \item Every 1-formula is $T$ equivalent to a disjunction of 1-isolators for $T$.
    \end{enumerate}
\end{theorem}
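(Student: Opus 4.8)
The plan is to prove all three parts by a simultaneous induction on the structure of the 1-formula $\varphi(x)$, although in fact parts (1) and (2) can be handled somewhat independently and then combined to yield (3). The key structural fact driving everything is that in $\mathbf{S5}$ every formula is equivalent to one of modal degree $\le 1$: boxes and diamonds cannot be meaningfully nested because $\lozenge A \to \Box \lozenge A$ (together with $\mathbf{T}$) collapses iterated modalities. So the first move is to invoke (or reprove) the standard $\mathbf{S5}$ reduction: every 1-formula is $T$-equivalent to a Boolean combination of extensional 1-formulas and formulas of the form $\lozenge \chi$ with $\chi$ extensional. I would state this as a preliminary lemma and prove it by the usual induction, the only non-routine clause being $\Box\varphi$, where one pushes the box inward past Boolean connectives using $\mathbf{K}$ and uses $\Box\lozenge A \leftrightarrow \lozenge A$, $\lozenge\Box A \leftrightarrow \Box A$ to absorb inner modalities.

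Next I would prove part (1). An extensional 1-formula is just a Boolean combination of the atoms $T(x)$ and $F(x)$; by classical propositional reasoning it is $T$-equivalent to a disjunction of the four conjunctions $\pm T(x) \wedge \pm F(x)$, i.e. of the extensional 1-isolators for $\mathbf{S5}$ (or, when $T$ extends $\mathbf{S5[Con]}$, of $T(x), F(x), N(x)$, after deleting the $T$-inconsistent disjunct and simplifying). One must note that the disjunction ranges only over those isolators that are $T$-consistent, since $T$-inconsistent disjuncts can be dropped; the convention that the empty disjunction is $\bot$ handles the case where $\varphi$ itself is $T$-refutable. The completeness claim — that these really are all the isolators — is immediate from the definition of $\Gamma$-maximality together with the fact that for each of the finitely many isolators there is a model realizing it (here, for $\mathbf{S5[Con]}$ and its extensions of interest, this uses Remark~\ref{consistency-remark} and Remark~\ref{truth-teller} for soundness of the consistency claims, but abstractly it is just the definition).

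For part (2), an intensional 1-formula is, after the $\mathbf{S5}$ reduction above, a Boolean combination of the formulas $\lozenge e$ where $e$ ranges over extensional 1-formulas; since each such $e$ is a disjunction of extensional 1-isolators $e_1,\dots,e_k$ and $\lozenge$ distributes over disjunction, every such formula is a Boolean combination of the three (or four, without $\mathbf{Con}$) formulas $\lozenge T(x)$, $\lozenge F(x)$, $\lozenge N(x)$. A Boolean combination of finitely many propositional atoms is a disjunction of complete conjunctions over those atoms — these are exactly the intensional pre-1-isolators — and discarding the $T$-inconsistent ones leaves precisely a disjunction of intensional 1-isolators. Finally, part (3): an arbitrary 1-formula reduces, via the $\mathbf{S5}$ reduction, to a Boolean combination of extensional atoms and $\lozenge$-of-extensional formulas; distributing into disjunctive normal form over the extensional 1-isolators and the $\lozenge$-atoms, each disjunct is a conjunction of an extensional part (equivalent to a single extensional 1-isolator, else the disjunct is inconsistent and dropped) and an intensional part (equivalent to a single intensional 1-isolator by the reasoning of part (2) applied conjunct-by-conjunct), i.e. a 1-isolator; inconsistent combinations are discarded.

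The main obstacle is not any single hard step but rather bookkeeping: one must be careful that "disjunction of isolators" is closed under the Boolean operations modulo $T$, i.e. that negation of a disjunction of isolators is again (provably in $T$) a disjunction of the remaining $T$-consistent isolators, and that conjunction of extensional and intensional isolators behaves well. This rests on the key combinatorial fact that the isolators for $T$ form, up to $T$-provable equivalence, the atoms of a finite Boolean algebra (the Lindenbaum algebra of extensional/intensional 1-formulas modulo $T$ is finite), which is exactly what $\Gamma$-maximality encodes. I would isolate this as the one genuinely load-bearing observation and let the rest follow formulaically.
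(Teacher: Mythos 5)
Your proposal is correct and follows essentially the route the paper intends: the paper states this result without proof, attributing it to \cite{carnap1946modalities}, and the standard argument is exactly your reduction of every formula to modal degree at most one in $\mathbf{S5}$, followed by a canonical disjunctive normal form over the atoms $T(x)$, $F(x)$ and the finitely many formulas $\lozenge\varepsilon$ with $\varepsilon$ an extensional 1-isolator, discarding the $T$-inconsistent disjuncts. Your closing observation that the $T$-consistent complete conjunctions are, up to $T$-provable equivalence, precisely the $\Gamma$-maximal formulas (the atoms of the relevant finite Lindenbaum algebra) is indeed the load-bearing point that makes the bookkeeping go through.
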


\subsection{Counting}

\cite{carnap1946modalities} calculates the number of formulas up to provable equivalence in the modal \emph{logic} $\mathbf{S5}$. Note the emphasis on the word \emph{logic}. The system that Carnap studies is in the typical language of modal logic, i.e., without the predicates $T,F$ but with propositional variables $p_1,p_2,\dots$. There are far more equivalence classes of 1-formulas in our system $\mathbf{S5}$ than in the modal logic Carnap studied.

\begin{proposition}
        Up to $\mathbf{S5}$-equivalence, there are exactly $4{,}294{,}967{,}296$ many 1-formulas.
\end{proposition}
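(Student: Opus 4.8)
The plan is to invoke Carnap's Theorem \ref{carnap}(3): every 1-formula is $\mathbf{S5}$-equivalent to a disjunction of 1-isolators for $\mathbf{S5}$, and distinct such disjunctions are $\mathbf{S5}$-inequivalent (the isolators are mutually exclusive and jointly exhaustive of the consistent cases, so a formula is pinned down by exactly the set of isolators appearing in its normal form). Hence the number of equivalence classes of 1-formulas equals $2^{k}$, where $k$ is the number of 1-isolators for $\mathbf{S5}$, and the empty disjunction (contradiction) is included. So the whole problem reduces to counting the 1-isolators.

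First I would count the extensional 1-isolators: by the proposition above there are exactly $4$ of them, namely $T(x)\wedge F(x)$, $T(x)\wedge\neg F(x)$, $\neg T(x)\wedge F(x)$, $\neg T(x)\wedge\neg F(x)$. Next I would count the intensional 1-isolators for $\mathbf{S5}$. These are the consistent intensional pre-1-isolators, i.e. the consistent conjunctions $\pm\lozenge T(x)\wedge\pm\lozenge F(x)\wedge\pm\lozenge N(x)$. Over $\mathbf{S5}$ alone (no $\mathbf{Con}$, no $\mathbf{Ground}$) none of the $2^{3}=8$ sign choices is refuted: for each subset $S\subseteq\{T,F,N\}$ one can build an $\mathbf{S5}$ model (equivalently, choose which of the three extensional 1-isolators over $\mathbf{S5}$ — wait, there are four — are realized), and in particular the all-negative one $\neg\lozenge T(x)\wedge\neg\lozenge F(x)\wedge\neg\lozenge N(x)$ is consistent only if some extensional 1-isolator other than $T(x),F(x),N(x)$ is realized, namely $T(x)\wedge F(x)$. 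So over $\mathbf{S5}$ I would actually recompute the intensional isolators in terms of all four extensional 1-isolators rather than the three-element list used for $\mathbf{S5[Con]}$: an intensional 1-isolator records, for each of the four extensional 1-isolators $\epsilon_1,\dots,\epsilon_4$, whether $\lozenge\epsilon_i$ holds, and consistency requires at least one to hold. That gives $2^{4}-1=15$ intensional 1-isolators.

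Then the number of 1-isolators is the number of consistent pairs (extensional 1-isolator, intensional 1-isolator). The intensional isolator $\psi$ asserts exactly which of the four extensional cells are nonempty; pairing it with an extensional isolator $\epsilon_i$ is $\mathbf{S5}$-consistent iff $\psi$ asserts $\lozenge\epsilon_i$, i.e. iff $i$ is among the asserted cells. So the count of 1-isolators is $\sum_{\emptyset\neq S\subseteq\{1,2,3,4\}}|S| = 4\cdot 2^{3} = 32$. Therefore the number of equivalence classes of 1-formulas is $2^{32}=4{,}294{,}967{,}296$, as claimed. The main obstacle — and the only real content beyond bookkeeping — is verifying that every one of these $32$ isolators is genuinely $\mathbf{S5}$-consistent (so that none of the $2^{32}$ disjunctions collapses): this requires exhibiting, for each admissible pair, an $\mathbf{S5}$ model and $\mathcal{L}_\Box$-valuation realizing it, which is routine since over plain $\mathbf{S5}$ the four extensional cells are independent and any nonempty pattern of occupied cells is realizable, with the current world placed in whichever cell $\epsilon_i$ we want.
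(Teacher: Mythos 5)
Your proposal is correct and follows essentially the same route as the paper: count the $4$ extensional 1-isolators and the $2^4-1=15$ intensional 1-isolators (built from possibility claims about all four cells $T\wedge F$, $T\wedge\neg F$, $\neg T\wedge F$, $\neg T\wedge\neg F$), observe that an extensional isolator is compatible exactly with the $2^3=8$ intensional isolators asserting its possibility, giving $32$ 1-isolators and hence $2^{32}$ equivalence classes via Carnap's normal form theorem. Your extra remark that each of the $32$ isolators must be checked $\mathbf{S5}$-consistent (so no disjunctions collapse) is a point the paper leaves implicit, and your summation $\sum_{\emptyset\neq S}|S|=32$ is just the paper's $4\times 8$ computed the other way around.
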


\begin{proof}
    Let's introduce some abbreviations:
    \begin{flalign*}
        B(x) & \quad T(x) \wedge F(x)\\
        T_{only}(x) & \quad  T(x) \wedge \neg F(x)\\
        F_{only}(x) & \quad  \neg T(x) \wedge  F(x)\\
        N(x) & \quad  \neg T(x) \wedge  \neg F(x)
    \end{flalign*}
    $\mathbf{S5}$ has 4 extensional 1-isolators and 15 intensional 1-isolators, namely, all 1-formulas of the form:
    $$ ^{+}_{\neg} \lozenge B(x) \wedge ^{+}_{\neg} \lozenge T_{only}(x) \wedge ^{+}_{\neg} \lozenge F_{only}(x)\wedge ^{+}_{\neg} \lozenge N(x)$$
except for the one with all negations.

Each extensional 1-isolator is inconsistent with the intensional 1-isolators claiming that it is impossible, and consistent otherwise.

So, for instance, $B(x)$ is consistent with all and only those intensional 1-isolators of the form:
    $$ \lozenge B(x) \wedge ^{+}_{\neg} \lozenge T_{only}(x) \wedge ^{+}_{\neg} \lozenge F_{only}(x)\wedge ^{+}_{\neg} \lozenge N(x)$$
of which there are exactly $2^3=8$.

A 1-isolator is a consistent conjunction of an extensional 1-isolator and an intensional 1-isolator. So there are $4\times 8=32$ many 1-isolators for $\mathbf{S5}$. Hence, the number of 1-formulas up to provable equivalence is exactly $2^{32}=4{,}294{,}967{,}296$.
\end{proof}

\begin{proposition}
        Up to $\mathbf{S5[Con]}$-equivalence, there are exactly $4096$ 1-formulas.
\end{proposition}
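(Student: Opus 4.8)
The plan is to carry out the same bookkeeping as in the preceding $\mathbf{S5}$ computation, using the smaller stock of isolators that survives once $\mathbf{Con}$ is assumed. By the propositions just proved, $\mathbf{S5[Con]}$ has exactly three extensional 1-isolators, namely $T(x)$, $F(x)$, $N(x)$, and exactly seven intensional 1-isolators: the eight intensional pre-1-isolators (one for each way of choosing, for each of $\lozenge T(x)$, $\lozenge F(x)$, $\lozenge N(x)$, whether or not to negate it) minus the all-negative one, which $\mathbf{Con}$ refutes. Since Theorem~\ref{carnap}(3) tells us that every 1-formula is $\mathbf{S5[Con]}$-equivalent to a disjunction of 1-isolators for $\mathbf{S5[Con]}$, it is enough to (a) count the 1-isolators and (b) verify that distinct disjunctions of them are never provably equivalent; the answer will then be $2$ raised to the number of 1-isolators.

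For (a): a 1-isolator is a $\mathbf{S5[Con]}$-consistent conjunction of an extensional with an intensional 1-isolator. Because $\mathbf{S5}\vdash A\to\lozenge A$, the isolator $T(x)$ is inconsistent with any intensional 1-isolator containing $\neg\lozenge T(x)$, and it is consistent with each of the four intensional pre-1-isolators containing $\lozenge T(x)$ — and all four of these are genuine intensional 1-isolators, since the refuted pre-isolator contains $\neg\lozenge T(x)$. Consistency of each such pairing is witnessed by a finite universal $\mathbf{S5}$-model whose worlds carry labels drawn from $\{T\}$, $\{F\}$, $\emptyset$ in whatever combination realizes the three diamonds; every world avoids the label $\{T,F\}$, so $\mathbf{Con}$ holds throughout. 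The cases of $F(x)$ and $N(x)$ are symmetric, each yielding $4$ compatible intensional 1-isolators, so there are $3\times 4 = 12$ 1-isolators altogether.

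For (b): distinct 1-isolators are jointly $\mathbf{S5[Con]}$-inconsistent — if their extensional parts differ they are already contradictory, since $T(x)$, $F(x)$, $N(x)$ are pairwise incompatible over $\mathbf{S5[Con]}$, and if the extensional parts coincide the intensional parts must disagree on the sign of some conjunct $\lozenge T(x)$, $\lozenge F(x)$, or $\lozenge N(x)$. Together with the satisfiability of each individual 1-isolator from (a), this shows that the $2^{12}$ disjunctions are pairwise inequivalent: given distinct sets $S$ and $S'$ of 1-isolators, pick $\iota$ in one but not the other, realize $\iota$ at a world $w$ of an $\mathbf{S5}$-model validating $\mathbf{Con}$, and note that exactly one of $\bigvee S$ and $\bigvee S'$ is true at $w$, so by soundness of $\mathbf{S5[Con]}$ they are not provably equivalent. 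Hence the 1-formulas fall into exactly $2^{12} = 4096$ equivalence classes. The only step requiring any attention is the consistency half of (a), i.e.\ producing the twelve witnessing models, but this is routine once one notes that $\mathbf{Con}$ only bans the label $\{T,F\}$ and places no constraint relating distinct worlds.
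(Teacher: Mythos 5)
Your proposal is correct and follows essentially the same route as the paper: invoke Theorem~\ref{carnap}, count $3\times 4=12$ 1-isolators for $\mathbf{S5[Con]}$ (each of $T(x),F(x),N(x)$ pairing consistently with exactly the four intensional isolators containing the corresponding diamond), and conclude $2^{12}=4096$. The only difference is that you make explicit what the paper leaves implicit---the satisfiability of each isolator via small universal models validating $\mathbf{Con}$ and the pairwise inequivalence of distinct disjunctions---which is a correct and harmless elaboration.
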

\begin{proof}
    We approach this calculation in a similar way. This time the extensional 1-isolators are $T(x),F(x),N(x)$. 
    
    Focus on $T(x)$ for a moment. It is consistent with all and only those intensional 1-isolators of the form:
    $$ \lozenge T(x) \wedge ^{+}_{\neg} \lozenge F(x)\wedge ^{+}_{\neg} \lozenge N(x)$$
of which there are exactly $2^2=4$. 

A 1-isolator is a consistent conjunction of an extensional 1-isolator and an intensional 1-isolator. So there are $3\times 4 = 12$ many 1-isolators for $\mathbf{S5[Con]}$. Hence, the number of formulas up to provable-equivalence is exactly $2^{12}=4096$.
\end{proof}

\begin{proposition}
       Up to $\mathbf{S5[Con, Ground]}$-equivalence, there are exactly $1024$ 1-formulas.
\end{proposition}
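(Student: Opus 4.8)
The plan is to run exactly the argument used for the two preceding propositions: invoke Carnap's theorem (Theorem~\ref{carnap}(3)) to reduce the count of 1-formulas up to $\mathbf{S5[Con,Ground]}$-equivalence to $2^{k}$, where $k$ is the number of 1-isolators for $\mathbf{S5[Con,Ground]}$, and then count those isolators. Since $\mathbf{S5[Con,Ground]}$ extends $\mathbf{S5[Con]}$, its extensional 1-isolators are still $T(x),F(x),N(x)$, and its intensional 1-isolators are the $\mathbf{S5[Con,Ground]}$-consistent members of the eight intensional pre-1-isolators. The excerpt has already shown that $\mathbf{S5[Con]}$ refutes the all-negative pre-isolator and that $\mathbf{S5[Con,Ground]}$ refutes in addition $\lozenge T(x)\wedge\lozenge F(x)\wedge\neg\lozenge N(x)$; what remains on this front is to check that each of the six surviving pre-isolators is $\mathbf{S5[Con,Ground]}$-consistent, which one does by exhibiting a small model — e.g.\ for $\lozenge T(x)\wedge\lozenge F(x)\wedge\lozenge N(x)$ a three-world model assigning $x$ the values \textsc{true}, \textsc{false}, \textsc{neither} respectively, and analogously for the others — or, if one prefers, by reading the configurations off Remark~\ref{truth-teller} and Remark~\ref{min-fix-point}.

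Next I would count the 1-isolators, i.e.\ the $\mathbf{S5[Con,Ground]}$-consistent conjunctions $\varepsilon\wedge\psi$ with $\varepsilon$ extensional and $\psi$ intensional. Because the system extends $\mathbf{S5}$, every $\lozenge\chi$ takes the same value at every world of a model, so $\varepsilon\wedge\psi$ is consistent iff $\psi$ contains $\lozenge\varepsilon$ as a positive conjunct: if $\varepsilon$ is satisfiable at a world then $\lozenge\varepsilon$ holds there (hence everywhere), forcing the positive choice in $\psi$; conversely, any model of the consistent formula $\psi$ has a world at which $\varepsilon$ holds, and $\mathbf{Con}$ creates no obstruction since $\varepsilon\in\{T(x),F(x),N(x)\}$. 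Of the six surviving intensional 1-isolators, exactly three contain $\lozenge T(x)$, exactly three contain $\lozenge F(x)$, and exactly four contain $\lozenge N(x)$; hence the number of 1-isolators for $\mathbf{S5[Con,Ground]}$ is $3+3+4=10$.

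Finally I would conclude $2^{10}=1024$. The upper bound is immediate from Carnap: every 1-formula is equivalent to a disjunction drawn from these $10$ isolators, so there are at most $2^{10}$ equivalence classes. For the matching lower bound one checks that distinct subsets give non-equivalent disjunctions, which holds because the ten 1-isolators are pairwise $\mathbf{S5[Con,Ground]}$-inconsistent — two with distinct extensional parts are refuted by $\mathbf{Con}$ or are outright contradictory, and two with the same extensional part but distinct intensional parts disagree on the polarity of some $\lozenge\chi$, whose value is constant across worlds — while each isolator is individually consistent. I expect the only step needing any care, as opposed to pure bookkeeping, to be this verification of consistency and pairwise incompatibility of the isolators; but it reduces to writing down a handful of finite models, so it should present no real obstacle.
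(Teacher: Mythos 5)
Your proposal is correct and follows essentially the same route as the paper: reduce via Carnap's normal form to counting 1-isolators, note that the extensional 1-isolators are $T(x),F(x),N(x)$ and that the $\mathbf{Ground}$ axiom kills exactly one pre-isolator beyond the one killed by $\mathbf{Con}$, arrive at $3+3+4=10$ isolators, and conclude $2^{10}=1024$. The extra steps you flag (consistency of the six surviving intensional isolators and pairwise incompatibility of the ten isolators, so that distinct disjunctions are inequivalent) are left implicit in the paper but are handled correctly in your argument.
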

\begin{proof}
    Again, the extensional 1-isolators are $T(x),F(x),N(x)$. 
    
    Focus on $T(x)$ for a moment. It is consistent with all and only those intensional 1-isolators of the form:
    $$ \lozenge T(x) \wedge ^{+}_{\neg} \lozenge F(x)\wedge ^{+}_{\neg} \lozenge N(x)$$
    \emph{except} (by the $\mathbf{Ground}$ axiom) for 
       $$ \lozenge T(x) \wedge \lozenge F(x)\wedge \neg \lozenge N(x).$$
So $T(x)$ is consistent with $2^2-1=4-1=3$ intensional 1-isolators. Likewise, $F(x)$ is consistent with $3$ intensional 1-isolators. $N(x)$ is consistent with $4$ (there is no need to delete one since we focus on formulas including $\lozenge N(x)$).

A 1-isolator is a consistent conjunction of an extensional 1-isolator and an intensional 1-isolator. So there are $3+3+4 = 10$ many 1-isolators for $\mathbf{S5[Con,Ground]}$. Hence, the number of formulas up to provable-equivalence is exactly $2^{10}=1024$.
\end{proof}

\subsection{Completeness}

In this subsection we present a complete axiomatization of the one-variable fragment of the modal theory of $\mathbf{FIX}$. Completeness follows easily from Carnap's normal form theorem. The connection between normal forms and completeness theorems was explored previously in \cite{fine1975normal}. We extend this result to the full modal language in a later section.

\begin{lemma}\label{isolator-completeness}
    For any 1-isolator $\varphi$, if $\varphi$ is $\mathbf{S5[Con, Ground]}$-consistent, then there is some $\star$ and some $w$ such that $\mathbf{FIX},w\Vdash_{V^\star} \varphi$.
\end{lemma}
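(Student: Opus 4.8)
The plan is to enumerate the finitely many $\mathbf{S5[Con, Ground]}$-consistent 1-isolators explicitly and, for each, exhibit a concrete $\mathcal{L}_T$-realization $\star$ witnessing it at some fixed point $w$. By the counting in the preceding subsection there are exactly ten consistent 1-isolators for $\mathbf{S5[Con,Ground]}$: three extensional 1-isolators $T(x)$, $F(x)$, $N(x)$, each conjoined with a compatible intensional 1-isolator of the form ${}^{+}_{\neg}\lozenge T(x)\wedge {}^{+}_{\neg}\lozenge F(x)\wedge {}^{+}_{\neg}\lozenge N(x)$, with the $\mathbf{Con}$-forbidden pattern (all negations) and the $\mathbf{Ground}$-forbidden pattern ($\lozenge T\wedge\lozenge F\wedge\neg\lozenge N$) removed. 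Since the intensional part of a 1-isolator is a global claim in $\mathbf{S5}$ (it is preserved under $\Box$ and $\lozenge$), satisfying $\varphi$ at \emph{some} $w$ amounts to choosing a realization $x^\star$ whose set of realized truth-values across $\mathbf{FIX}$ matches the prescribed $\lozenge$-pattern, and then picking $w$ so that $x^\star$ has the value dictated by the extensional conjunct in $w$.

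The key step is therefore a dictionary between the intensional conjunct and a choice of $x^\star$, and here Remark~\ref{truth-teller} (infinitely many logically independent truth-tellers) together with Remark~\ref{min-fix-point} does all the work. I would argue as follows. If the conjunct asks for $\lozenge T(x)\wedge\lozenge F(x)\wedge\lozenge N(x)$, take $x^\star$ to be a single truth-teller $\tau$: it is \textsc{true} in some fixed point, \textsc{false} in another, and (being ungrounded) \textsc{neither} in the minimum fixed point. If it asks for $\lozenge T(x)\wedge\neg\lozenge F(x)\wedge\lozenge N(x)$, take $x^\star=\tau\vee\neg\tau$ for a truth-teller $\tau$: in strong Kleene this is \textsc{true} whenever $\tau$ is \textsc{true} or \textsc{false}, \textsc{neither} when $\tau$ is \textsc{neither}, and never \textsc{false}; symmetrically $\tau\wedge\neg\tau$ handles $\neg\lozenge T(x)\wedge\lozenge F(x)\wedge\lozenge N(x)$. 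For $\lozenge T(x)\wedge\neg\lozenge F(x)\wedge\neg\lozenge N(x)$ take a grounded truth, e.g. $0=0$; for $\neg\lozenge T(x)\wedge\lozenge F(x)\wedge\neg\lozenge N(x)$ a grounded falsity; and for $\neg\lozenge T(x)\wedge\neg\lozenge F(x)\wedge\lozenge N(x)$ take the liar $\lambda$, which is paradoxical and hence \textsc{neither} everywhere. Finally, $\lozenge T(x)\wedge\lozenge F(x)\wedge\neg\lozenge N(x)$ is exactly the pattern killed by $\mathbf{Ground}$, so it does not arise. In each case, once $x^\star$ is fixed, the extensional conjunct of $\varphi$ (one of $T(x)$, $F(x)$, $N(x)$) is realized by choosing $w\in\mathbf{FIX}$ to be a fixed point where $x^\star$ takes the corresponding value — such a $w$ exists precisely because the chosen $\lozenge$-pattern includes that value.

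The main obstacle is bookkeeping rather than conceptual: one must check that the extensional conjunct is always \emph{compatible} with the intensional conjunct of any consistent 1-isolator (so that the required $w$ exists), and that the list of seven intensional patterns above, cross-referenced against $T(x)$, $F(x)$, $N(x)$, reproduces exactly the ten consistent 1-isolators — no more, no fewer — matching the count from the previous proposition. It is also worth being explicit that when we use $\tau\vee\neg\tau$ we are invoking the strong Kleene clauses for $\vee$ and $\neg$ from Section~2.1 to get the claimed value table, and that the appeal to the minimum fixed point in Remark~\ref{min-fix-point} is what guarantees the $\lozenge N(x)$ conjuncts whenever $x^\star$ is ungrounded. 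Since $\varphi$ is a \emph{global} formula modulo its extensional conjunct, $\mathbf{FIX}\Vdash_{V^\star}$ of the intensional part follows automatically, and we get $\mathbf{FIX},w\Vdash_{V^\star}\varphi$ as required.
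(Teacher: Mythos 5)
Your proposal is correct and takes essentially the same approach as the paper: a finite case analysis over the consistent 1-isolators, exhibiting an explicit realization for each intensional pattern (truth-teller, a tautology-teller-like sentence, grounded truth/falsity, liar) and then choosing the world $w$ according to the extensional conjunct, with the compatibility of the two conjuncts secured by $\varepsilon(x)\to\lozenge\varepsilon(x)$. The only cosmetic difference is that you use $\tau\vee\neg\tau$ and $\tau\wedge\neg\tau$ where the paper uses the self-referential $\gamma$ and its negation, which is an equally valid choice of witnesses.
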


\begin{proof}
    Suppose $\varphi$ is a 1-isolator, i.e., $\varphi=\varepsilon(x)\wedge \iota(x)$ where $\varepsilon(x)$ is an extensional 1-isolator and $\iota(x)$ is an intensional 1-isolator. $\mathbf{S5[Con, Ground]}\vdash \varepsilon(x) \to \lozenge \varepsilon(x)$. Hence $\iota(x)$ has $\lozenge \varepsilon(x)$ as a conjunct.

    We now break into cases depending on which extensional 1-isolator $\varepsilon(x)$ is. In each case we will break into subcases, checking each intensional 1-isolator that has $\lozenge \varepsilon(x)$ as a conjunct. In each subcase we find an assignment $\star$ that satisfies $\varphi$.

\emph{Case 1:} Suppose that $\varepsilon(x)$ is $T(x)$. We must check the following intensional 1-isolators:
    \begin{enumerate}
        \item[(i)] $\iota(x)= \lozenge T(x) \wedge \lozenge F(x) \wedge \lozenge N(x)$. Let $x^\star = \tau$. Then there is some $w$ such that $\mathbf{FIX},w\Vdash_{V^\star} \varepsilon(x)\wedge \iota(x)$.
        \item[(ii)] $\iota(x):=\lozenge T(x) \wedge \neg \lozenge F(x) \wedge \lozenge N(x)$. Let $x^\star=\gamma$. Then there is some $w$ such that $\mathbf{FIX},w\Vdash_{V^\star} \varepsilon(x)\wedge \iota(x)$.
        \item[(iii)] $\iota(x):=\lozenge T(x) \wedge \neg \lozenge F(x) \wedge \neg \lozenge N(x)$. Let $x^\star=0=0$. Then for any $w$, $\mathbf{FIX},w\Vdash_{V^\star} \varepsilon(x)\wedge \iota(x)$.
    \end{enumerate}

We leave Cases 2 ($\varepsilon(x)$ is $F(x)$) and 3 ($\varepsilon(x)$ is $N(x)$) to the reader.
\end{proof}

\begin{theorem}
        For all 1-formulas $\varphi\in\mathcal{L}_\Box$, the following are equivalent:
        \begin{enumerate}
            \item[(i)] $\mathbf{FIX}\Vdash \varphi$
            \item[(ii)] $ \mathbf{S5[Con, Ground]}\vdash \varphi$
        \end{enumerate}
\end{theorem}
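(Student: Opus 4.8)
The strategy is the standard "normal form $\Rightarrow$ completeness" argument, now that all the pieces are in place. I would prove the two directions separately.

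For (ii) $\Rightarrow$ (i), i.e. soundness: by Proposition \ref{sound-1}, for every $\mathcal{L}_T$-realization $\star$ we have $\mathbf{FIX}\Vdash_{V^\star}\mathbf{S5[Con,Ground]}$, and validity of the axioms together with soundness of the rules (which is immediate from the truth-definition in Definition \ref{model}, since $\mathbf{FIX}$ is just an $\mathbf{S5}$-model under each $V^\star$) gives $\mathbf{FIX}\Vdash_{V^\star}\varphi$ for every theorem $\varphi$; quantifying over $\star$ yields $\mathbf{FIX}\Vdash\varphi$.

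For (i) $\Rightarrow$ (ii), i.e. completeness, I argue by contraposition. Suppose $\mathbf{S5[Con,Ground]}\nvdash\varphi(x)$. By Carnap's theorem (Theorem \ref{carnap}(3)), $\varphi$ is $\mathbf{S5[Con,Ground]}$-equivalent to a disjunction $\psi_1\vee\dots\vee\psi_k$ of 1-isolators for $\mathbf{S5[Con,Ground]}$. Since $\varphi$ is not a theorem and the $\psi_i$ are pairwise mutually exclusive (any two distinct 1-isolators are jointly inconsistent, as they disagree on some conjunct), there must be some 1-isolator $\chi$ for $\mathbf{S5[Con,Ground]}$ that does \emph{not} appear among $\psi_1,\dots,\psi_k$ and is $\mathbf{S5[Con,Ground]}$-consistent; indeed, if every consistent 1-isolator appeared, the disjunction — hence $\varphi$ — would be provable, since the disjunction of all consistent 1-isolators is a theorem (again by Carnap's theorem applied to $\top$). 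Then $\mathbf{S5[Con,Ground]}\nvdash\chi\to\varphi$, so in particular $\mathbf{S5[Con,Ground]}\vdash\chi\to\neg\varphi$ is not what we need — rather, since $\chi$ implies $\neg\psi_i$ for each $i$, we get $\mathbf{S5[Con,Ground]}\vdash\chi\to\neg\varphi$. Now apply Lemma \ref{isolator-completeness}: since $\chi$ is a $\mathbf{S5[Con,Ground]}$-consistent 1-isolator, there is a realization $\star$ and a world $w$ with $\mathbf{FIX},w\Vdash_{V^\star}\chi$. But then $\mathbf{FIX},w\Vdash_{V^\star}\neg\varphi$, so $\mathbf{FIX}\nVdash_{V^\star}\varphi$, hence $\mathbf{FIX}\nVdash\varphi$, as required.

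The only real content beyond bookkeeping is Lemma \ref{isolator-completeness}, which is already established, and the observation that the disjunction of all $\mathbf{S5[Con,Ground]}$-consistent 1-isolators is a theorem — this follows by applying Carnap's normal form to a tautology and noting that inconsistent disjuncts can be dropped. The main thing to be careful about is the case analysis matching each 1-formula to its disjunctive normal form and verifying that "not provable" transfers to "some consistent isolator is omitted"; this is where one must use that distinct isolators are mutually exclusive and that their total disjunction is a theorem. No new obstacle arises; the theorem is essentially a corollary of Theorem \ref{carnap} and Lemma \ref{isolator-completeness}.
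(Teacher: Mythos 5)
Your proof is correct, and it uses the same ingredients as the paper (Proposition \ref{sound-1} for soundness, Theorem \ref{carnap} plus Lemma \ref{isolator-completeness} for completeness), but the completeness step is executed along a slightly different route. The paper applies Carnap's normal form to $\neg\varphi$: since $\mathbf{S5[Con, Ground]}\nvdash\varphi$, the formula $\neg\varphi$ is consistent, its normal form has at least one consistent 1-isolator as a disjunct, and Lemma \ref{isolator-completeness} realizes that disjunct, immediately giving a world and realization refuting $\varphi$. You instead normalize $\varphi$ itself and then argue that some consistent 1-isolator $\chi$ must be \emph{omitted} from its normal form, which forces you to establish two auxiliary facts: that distinct 1-isolators are pairwise inconsistent, and that the disjunction of all consistent 1-isolators is a theorem (via Carnap applied to $\top$). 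Both facts are true and your justifications of them go through, so the argument is sound; the cost is just the extra bookkeeping that the paper's choice of normalizing $\neg\varphi$ avoids. One sentence in your write-up is garbled (``$\mathbf{S5[Con,Ground]}\vdash\chi\to\neg\varphi$ is not what we need --- rather \dots we get $\mathbf{S5[Con,Ground]}\vdash\chi\to\neg\varphi$''), but the statement you actually use, namely $\vdash\chi\to\neg\varphi$ via mutual exclusivity of isolators, is exactly right, so this is only a presentational slip.
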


\begin{proof}
    The soundness direction is Proposition \ref{sound-1}. Let's focus on completeness. Suppose that $\mathbf{S5[Con, Ground]}\nvdash \varphi$. Then, by classical logic, $\mathbf{S5[Con, Ground]}+\neg \varphi$ is consistent. By Theorem \ref{carnap}, 
    $$\mathbf{S5[Con, Ground]}\vdash \neg \varphi \leftrightarrow (\psi_1\vee\dots\vee\psi_k)$$ 
    where each $\psi_i$ is a 1-isolator. By classical logic, at least one of these 1-isolators $\psi_i$ is consistent. By Lemma \ref{isolator-completeness}, we infer that there is some $\star$ such that $\mathbf{FIX}\Vdash_{V^\star} \psi_i$. Hence $\mathbf{FIX}\Vdash_{V^\star} \neg\varphi$, so $\mathbf{FIX}\nVdash \varphi$.
\end{proof}

Before continuing, we note that the normal form theorem also yields an exact characterization of the number of \textbf{FIX}-definable sets.

\begin{proposition}\label{intensional-dfn}
    Every $\mathbf{FIX}$-definable $A\subseteq \mathcal{L}_T$ is definable by a (possibly empty) disjunction of intensional 1-isolators for $\mathbf{S5[Con, Ground]}$.
\end{proposition}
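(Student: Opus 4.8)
The plan is to reduce the claim to Carnap's normal form theorem (Theorem~\ref{carnap}) together with the preceding completeness theorem, and to observe that membership in a $\mathbf{FIX}$-definable set is a purely intensional matter. Suppose $A\subseteq\mathcal{L}_T$ is $\mathbf{FIX}$-definable, say by the 1-formula $\varphi(x)$. By Theorem~\ref{carnap}(3), $\mathbf{S5[Con,Ground]}\vdash\varphi(x)\leftrightarrow\bigvee_{i}\big(\varepsilon_i(x)\wedge\iota_i(x)\big)$, a disjunction of 1-isolators, where each $\varepsilon_i$ is an extensional 1-isolator (one of $T(x),F(x),N(x)$) and each $\iota_i$ an intensional 1-isolator. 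I want to show that $\varphi$ is $\mathbf{S5[Con,Ground]}$-equivalent — or at least equivalent \emph{over} $\mathbf{FIX}$, which is all that is needed — to the disjunction $\bigvee_i\iota_i(x)$ obtained by deleting every extensional conjunct.

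The key step is the following observation about how $\mathbf{FIX}$ treats extensional formulas: for any $\mathcal{L}_T$-realization $\star$ and any extensional 1-isolator $\varepsilon(x)$, whether $\mathbf{FIX}\Vdash_{V^\star}\varepsilon(x)$ holds does not in general reflect a property of $x^\star$ alone — indeed $\varepsilon(x)$ can fail at some worlds and hold at others — whereas by definition $A$ is $\mathbf{FIX}$-definable, so membership of $x^\star$ in $A$ is equivalent to $\mathbf{FIX}\Vdash_{V^\star}\varphi(x)$, which \emph{is} such a property. The crux is therefore to argue that if $\mathbf{FIX}\Vdash_{V^\star}\varphi(x)$ then already $\mathbf{FIX}\Vdash_{V^\star}\bigvee_i\iota_i(x)$, and conversely. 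For the forward direction: $\mathbf{FIX}\Vdash_{V^\star}\varphi(x)$ means the disjunction holds at \emph{every} world $w$; at each $w$ some disjunct $\varepsilon_i(x)\wedge\iota_i(x)$ holds, so in particular $\iota_i(x)$ holds there, and since $\iota_i$ is intensional its truth value is world-independent (all its variables are under $\Box$'s, so in an $\mathbf{S5}$ frame it holds at one world iff at all). Hence $\bigvee_i\iota_i(x)$ holds at $w$, and as $w$ was arbitrary, $\mathbf{FIX}\Vdash_{V^\star}\bigvee_i\iota_i(x)$. For the converse: since each $\iota_i$ has $\lozenge\varepsilon_i(x)$ as a conjunct (as noted in the proof of Lemma~\ref{isolator-completeness}, $\mathbf{S5[Con,Ground]}\vdash\varepsilon_i(x)\to\lozenge\varepsilon_i(x)$ and intensional 1-isolators are $\Gamma$-maximal), whenever $\iota_i(x)$ holds under $V^\star$ there is a world where $\varepsilon_i(x)\wedge\iota_i(x)$ holds; but one must still check that $\varphi$ holds at \emph{all} worlds. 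This is where $\mathbf{FIX}$-definability is used essentially: if $\mathbf{FIX}\Vdash_{V^\star}\bigvee_i\iota_i(x)$ but $\mathbf{FIX}\nVdash_{V^\star}\varphi(x)$, then $x^\star\notin A$, yet one can produce another realization $\star'$ with $x^{\star'}$ the same sentence (or one with the same relevant modal profile) at which $\varphi$ does hold, contradicting definability — alternatively, and more cleanly, note that $\bigvee_i\iota_i(x)$ is itself an intensional formula, hence $\mathbf{FIX}$-definable, and two 1-formulas that define the same set over $\mathbf{FIX}$ are by the completeness theorem $\mathbf{S5[Con,Ground]}$-equivalent once we know the forward inclusion holds at the level of defined sets.

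I expect the main obstacle to be pinning down the converse direction rigorously: showing that deleting extensional conjuncts does not \emph{enlarge} the defined set. The cleanest route is probably to avoid it entirely by the following maneuver: let $A$ be defined by $\varphi$; let $B$ be the set defined by $\psi:=\bigvee_i\iota_i(x)$ (an intensional formula — well-defined as a set by the completeness/Carnap machinery, or directly since intensional formulas have world-independent truth values under any $V^\star$). The forward argument above shows $A\subseteq B$. For $B\subseteq A$, take $x^\star$ with $\mathbf{FIX}\Vdash_{V^\star}\psi$; then some $\iota_i(x)$ holds under $V^\star$ at every world, and since $\iota_i$ has $\lozenge\varepsilon_i(x)$ as a conjunct, there is a world $w$ with $\mathbf{FIX},w\Vdash_{V^\star}\varepsilon_i(x)\wedge\iota_i(x)$. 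By Lemma~\ref{isolator-completeness} and Remark~\ref{truth-teller} (the abundance of independent truth-tellers) one can choose the realization so that this profile is realized while keeping the sentence assigned to $x$ inside $A$ — but the slickest finish is simply to invoke the single-variable completeness theorem: $\mathbf{S5[Con,Ground]}\nvdash\varphi\leftrightarrow\psi$ would give a realization separating them over $\mathbf{FIX}$, contradicting $A=B$ once both inclusions between the \emph{defined sets} are available. So the real content is the forward inclusion plus a symmetric argument, and I would write it by establishing $A\subseteq B$ directly as above, then running the mirror-image computation (using that each $\iota_i$ forces $\lozenge\varepsilon_i$ and applying Lemma~\ref{isolator-completeness}) to get $B\subseteq A$, and concluding that $\psi$, a disjunction of intensional 1-isolators, defines $A$.
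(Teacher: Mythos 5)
There is a genuine gap, and it sits exactly where you flag the obstacle: the converse inclusion $B\subseteq A$ is false, so the strategy of applying Theorem~\ref{carnap}(3) to $\varphi$ itself and deleting the extensional conjuncts does not work. Concretely, take $\varphi(x)=T(x)$, which $\mathbf{FIX}$-defines the set $A$ of sentences \textsc{true} in every fixed point. Its normal form is the disjunction of the three 1-isolators whose extensional part is $T(x)$, and deleting the extensional conjuncts leaves a disjunction that is $\mathbf{S5[Con,Ground]}$-equivalent to $\lozenge T(x)$, which defines the strictly larger set $B$ of sentences \textsc{true} in \emph{some} fixed point (the truth-teller $\tau$ lies in $B\setminus A$). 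Neither of your proposed repairs closes this. You cannot ``produce another realization $\star'$ with $x^{\star'}$ the same sentence at which $\varphi$ does hold,'' because $V^\star_1(x)$ and $V^\star_2(x)$ are determined by the sentence $x^\star$, so every realization sending $x$ to $\tau$ agrees on $\varphi(x)$; there is no contradiction to extract, since the statement you are trying to protect is simply false. And the appeal to the completeness theorem conflates ``defining the same set over $\mathbf{FIX}$'' with $\mathbf{FIX}$-validity of the biconditional: these come apart, as $T(x)$ and $\Box T(x)$ define the same set although $T(x)\leftrightarrow\Box T(x)$ is not $\mathbf{FIX}$-valid, so completeness provides no bridge from equality of defined sets to provable equivalence of the defining formulas.

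The missing idea --- which is the paper's entire proof --- is to exploit the globality of $\Vdash$ \emph{before} invoking normal forms. Since $\mathbf{FIX}\Vdash_{V^\star}\varphi(x)$ means that $\varphi$ holds at every world, we have $\mathbf{FIX}\Vdash_{V^\star}\varphi(x)$ iff $\mathbf{FIX}\Vdash_{V^\star}\Box\varphi(x)$; hence $\Box\varphi$ defines $A$ as well. But $\Box\varphi$ is an intensional formula, so Theorem~\ref{carnap}(2) rewrites it as a disjunction of intensional 1-isolators, and by soundness that disjunction still defines $A$. Your forward inclusion $A\subseteq B$ is correct, but it is the replacement of $\varphi$ by $\Box\varphi$, not the deletion of extensional conjuncts from $\varphi$'s own normal form, that makes the other half true.
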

\begin{proof}
    Every $\mathbf{FIX}$-definable $A\subseteq \mathcal{L}_T$ is definable by an intensional 1-formula. Indeed, if $\varphi$ defines $A$, then this means that exactly the elements of $A$ satisfy $\varphi$ at \emph{every} world. So $\Box\varphi$ also defines $A$. The result follows since every intensional 1-formula is equivalent to a disjunction of intensional 1-isolators (see Theorem \ref{carnap}).
\end{proof}

\begin{corollary}
    The number of $\mathbf{FIX}$-definable sets is exactly $64$.
\end{corollary}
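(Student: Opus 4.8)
The plan is to combine Proposition~\ref{intensional-dfn} with a count of the intensional 1-isolators for $\mathbf{S5[Con, Ground]}$ and a short argument that distinct disjunctions of these isolators define distinct sets. By Proposition~\ref{intensional-dfn}, every $\mathbf{FIX}$-definable $A \subseteq \mathcal{L}_T$ is defined by a disjunction of intensional 1-isolators for $\mathbf{S5[Con, Ground]}$, so the number of definable sets is at most $2^{k}$, where $k$ is the number of such isolators.

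First I would pin down $k$. There are $2^3 = 8$ intensional pre-1-isolators, and, as recorded in the discussion of intensional 1-isolators above, $\mathbf{S5[Con]}$ refutes exactly one of them (the one with all negations) while $\mathbf{S5[Con, Ground]}$ refutes one further, namely $\lozenge T(x) \wedge \lozenge F(x) \wedge \neg\lozenge N(x)$. The remaining six are consistent: each carries at least one positive $\lozenge$-conjunct, so conjoining it with the matching extensional 1-isolator gives a consistent 1-isolator, which is realized over $\mathbf{FIX}$ by Lemma~\ref{isolator-completeness}. Hence $k = 6$ and there are at most $64$ definable sets.

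For the matching lower bound I would observe that these six isolators partition $\mathcal{L}_T$ into nonempty classes. Pairwise incompatibility is immediate (any two disagree on the sign of some $\lozenge$-conjunct); joint exhaustiveness holds because the two omitted possibility-profiles are unrealizable over $\mathbf{FIX}$ --- $\lozenge T \wedge \lozenge F \wedge \neg\lozenge N$ fails by Remark~\ref{min-fix-point}, and the all-negative profile fails since every sentence takes some value in the minimum fixed point --- and nonemptiness of each class is again Lemma~\ref{isolator-completeness}. Since an intensional 1-formula's truth under $V^\star$ depends only on which class $x^\star$ lies in, each $\bigvee S$ (for $S$ a set of the six isolators) defines exactly the union of the classes named by $S$; two different $S$ therefore define different sets, because any $\iota$ in their symmetric difference contributes a nonempty class to one union but not the other. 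Combining the bounds gives exactly $64$.

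The only step requiring real care is this lower bound --- specifically confirming that the six surviving intensional 1-isolators carve $\mathcal{L}_T$ into six nonempty, pairwise disjoint classes. Disjointness is syntactic bookkeeping and nonemptiness is exactly what Lemma~\ref{isolator-completeness} provides, so no essentially new argument is needed; the rest is arithmetic.
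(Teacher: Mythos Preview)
Your proposal is correct and follows the paper's approach: count the intensional 1-isolators for $\mathbf{S5[Con, Ground]}$ (there are six) and invoke Proposition~\ref{intensional-dfn}. In fact you are more careful than the paper, which simply writes ``$2^6=64$'' after citing Proposition~\ref{intensional-dfn}; you rightly observe that Proposition~\ref{intensional-dfn} alone only gives the upper bound, and you supply the missing lower-bound argument (via Lemma~\ref{isolator-completeness}, the six isolators cut $\mathcal{L}_T$ into six nonempty, pairwise disjoint classes, so distinct disjunctions define distinct sets). The paper leaves that step implicit.
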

\begin{proof}
    There are exactly $6$ intensional 1-isolators for $\mathbf{S5[Con, Ground]}$. $2^6=64$. The result then follows from Proposition \ref{intensional-dfn}.
\end{proof}

\subsection{Descriptive Complexity}

In this subsection we will finally fulfill our promise to prove the following result:
\begin{theorem}
    The set of intrinsic sentences is not $\mathbf{FIX}$-definable.
\end{theorem}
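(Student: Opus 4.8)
The plan is to read off the result from Proposition \ref{intensional-dfn}: every $\mathbf{FIX}$-definable set of sentences is definable by a disjunction of intensional $1$-isolators for $\mathbf{S5[Con,Ground]}$. Since distinct intensional $1$-isolators are pairwise $\mathbf{S5[Con,Ground]}$-inconsistent (they are $\Gamma$-maximal) and intensional formulas have a truth value that does not vary from world to world, a disjunction of intensional $1$-isolators assigns the same value to any two $\mathcal{L}_T$-realizations that satisfy one and the same intensional $1$-isolator. So it suffices to exhibit two $\mathcal{L}_T$-sentences $a$ and $b$ which satisfy the same intensional $1$-isolator over $\mathbf{FIX}$ but with $a$ intrinsic and $b$ not intrinsic: no disjunction of intensional $1$-isolators could then hold of exactly the intrinsic sentences, since it would have to hold of $a$ and fail of $b$.

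For the witnesses I would take $a=\gamma$, the tautology-teller, and $b=\tau\vee\neg\tau$ with $\tau$ a truth-teller. By the discussion preceding the theorem, $\gamma$ is intrinsic while $\tau\vee\neg\tau$ is not. The one thing to check is that $\gamma$ and $\tau\vee\neg\tau$ realize the same intensional profile over $\mathbf{FIX}$, namely that each is \textsc{true} in some fixed point, \textsc{false} in no fixed point, and \textsc{neither} in some fixed point — equivalently, each satisfies $\lozenge T(x)\wedge\neg\lozenge F(x)\wedge\lozenge N(x)$, and hence (being mutually exclusive with the other five) satisfies this and only this intensional $1$-isolator. For $\gamma$ this is exactly what is recorded in the remarks above ($\gamma$ is \textsc{true} in some model, \textsc{false} in none, and ungrounded, hence \textsc{neither} in the minimum fixed point). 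For $\tau\vee\neg\tau$: it is \textsc{true} in any fixed point where $\tau$ is \textsc{true} (Remark \ref{truth-teller}); by the strong Kleene clauses for $\vee$ and $\neg$ it is \textsc{false} in a fixed point only if $\tau$ is both \textsc{true} and \textsc{false} there, which never happens by consistency of fixed points (Remark \ref{consistency-remark}); and in the minimum fixed point $\tau$ is \textsc{neither} (Remark \ref{min-fix-point}), hence so are $\neg\tau$ and $\tau\vee\neg\tau$.

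Assembling the argument: suppose for contradiction that the set of intrinsic sentences were $\mathbf{FIX}$-definable, and use Proposition \ref{intensional-dfn} to fix a defining disjunction $D$ of intensional $1$-isolators. Since $\gamma$ and $\tau\vee\neg\tau$ each satisfy $D$ precisely when the isolator $\lozenge T(x)\wedge\neg\lozenge F(x)\wedge\lozenge N(x)$ occurs among its disjuncts, $D$ holds of $\gamma$ if and only if it holds of $\tau\vee\neg\tau$; but $\gamma$ is intrinsic and $\tau\vee\neg\tau$ is not, contradicting that $D$ defines the intrinsic set. (The same pair illustrates, as promised in the introduction, why the intrinsic and the inevitable sentences cannot be told apart in the modal language: both $\gamma$ and $\tau\vee\neg\tau$ are inevitable, so every modal $1$-formula treats them alike, yet only one is intrinsic.)

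I expect the only real obstacle to be the verification in the second paragraph that $\gamma$ and $\tau\vee\neg\tau$ share an intensional profile, and within that the point that $\tau\vee\neg\tau$ is never \textsc{false} — but this is immediate from the strong Kleene disjunction clause and the consistency of fixed points. The conceptual work (Carnap's normal form, Theorem \ref{carnap}, and Proposition \ref{intensional-dfn}) is already in place, so the theorem reduces to choosing the right pair of sentences and reading off their modal types.
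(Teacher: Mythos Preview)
Your argument is correct. The key step—that $\gamma$ and $\tau\vee\neg\tau$ realize the same intensional $1$-isolator $\lozenge T(x)\wedge\neg\lozenge F(x)\wedge\lozenge N(x)$ over $\mathbf{FIX}$, while only the first is intrinsic—is exactly what is needed, and your verification of the three modal facts about $\tau\vee\neg\tau$ is sound.

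Your route is a streamlined variant of one of the paper's two proofs. The paper first gives a descriptive-complexity argument: every $\mathbf{FIX}$-definable set is Boolean-$\Sigma^1_1$, whereas by a theorem of Burgess the intrinsic sentences are $\Sigma^1_1$-complete in a $\Pi^1_1$ parameter. It then gives a second proof via the stronger Theorem \ref{ind}, showing that the intrinsic and the inevitable sentences are fully $\mathbf{FIX}$-indiscernible (every modal $1$-formula true of all intrinsic sentences is true of all inevitable sentences, and conversely); non-definability then follows since the inclusion is proper. Your argument extracts just what is needed from this second line: rather than establishing indiscernibility for every isolator in the range (ii)--(v), you exhibit a single pair $\gamma$, $\tau\vee\neg\tau$ sharing one isolator, which already blocks any putative modal definition. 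This buys brevity; the paper's version buys the additional information that \emph{inevitability} is the finest modally definable set containing the intrinsic sentences, and its first proof buys a uniform complexity bound ruling out many other sets at once.
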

There are multiple ways of verifying this, and we will present two. One could check all 64 single variable definitions, although this would be tedious. Luckily, there are more conceptual arguments.

Every $\mathbf{FIX}$-definable set is definable by an intensional 1-formula. This is because if $\varphi$ $\mathbf{FIX}$-defines a set $\mathcal{A}$, then so does $\Box\varphi$. We have already seen that each intensional 1-formula is $\mathbf{S5[Con, Ground]}$-equivalent to a disjunction of intensional 1-isolators. So, pushing quantifiers in, we see that every modal definition is equivalent to a formula of the following form:
$$\big( {}_{\lozenge}^{\Box} {}_{\neg}^{+} {}_{F}^{T} (x) \wedge \dots \wedge {}_{\lozenge}^{\Box} {}_{\neg}^{+} {}_{F}^{T} (x) \big)\vee \dots \vee ({}_{\lozenge}^{\Box} {}_{\neg}^{+} {}_{F}^{T} (x) \wedge \dots \wedge {}_{\lozenge}^{\Box} {}_{\neg}^{+} {}_{F}^{T} (x)\big)$$
Let's say that any such formula is in \emph{basic form}.

\begin{definition}
Let $\mathsf{Fix}$ be an $\mathcal{L}_2$ formula defining the set of fixed points. We define a translation that maps $\star:\mathcal{L}_\Box$ formulas in basic form to $\mathcal{L}_2$ formulas.

$(A \vee \dots \vee A)^\star := A^\star \vee \dots \vee A^\star$

$(\Box _{F}^{T} (x))^\star := \forall X\big( \mathsf{Fix}(X) \to \ulcorner _{F}^{T} (x)^\star \urcorner \in X\big)$

$(\Box \neg _{F}^{T} (x))^\star := \forall X\big( \mathsf{Fix}(X) \to \ulcorner _{F}^{T} (x)^\star \urcorner \notin X\big)$

$(\lozenge _{F}^{T} (x))^\star := \exists X\big( \mathsf{Fix}(X) \wedge \ulcorner _{F}^{T} (x)^\star \urcorner \in X\big)$

$(\lozenge \neg _{F}^{T} (x))^\star := \exists X\big( \mathsf{Fix}(X) \wedge \ulcorner _{F}^{T} (x)^\star \urcorner \notin X\big)$

$T(x)^\star = x$

$F(x)^\star=Neg(x^\star)$
\end{definition}

\begin{lemma}
    If $\varphi$ modally defines $A$, then $\varphi^\star$ defines $A$ over the standard interpretation of second-order arithmetic.
\end{lemma}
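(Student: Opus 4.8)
The plan is to unwind the definitions on both sides of the translation and check that they match world-by-world, using the key fact that $\mathbf{FIX}$ consists of \emph{all} the Kripke fixed points. Suppose $\varphi$ is in basic form and modally defines $A$; fix a realization $\star_0$ in the sense of $\mathbf{FIX}$ (a map from variables to $\mathcal{L}_T$ formulas) and write $x^{\star_0}$ for the sentence it picks out. First I would observe that since $\varphi$ defines $A$ and $\varphi$ is in basic form, $\varphi$ is intensional, so $\mathbf{FIX}\Vdash_{V^{\star_0}}\varphi$ iff $\mathbf{FIX},w\Vdash_{V^{\star_0}}\varphi$ for any (equivalently, every) world $w$; and this holds iff $x^{\star_0}\in A$. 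Thus it suffices to show that for each world $w$, $\mathbf{FIX},w\Vdash_{V^{\star_0}}\varphi$ if and only if $\varphi^\star$ holds of $x^{\star_0}$ in the standard model of second-order arithmetic.

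The core of the argument is a routine induction over the basic form. For the disjunction clause this is immediate from the definition of $(\cdot)^\star$ and the semantics of $\vee$. The interesting conjuncts are the ones of shape $\Box\,^{T}_{F}(x)$, $\Box\neg\,^{T}_{F}(x)$, $\lozenge\,^{T}_{F}(x)$, $\lozenge\neg\,^{T}_{F}(x)$. Take $\lozenge T(x)$ as the representative case: by the modal semantics, $\mathbf{FIX},w\Vdash_{V^{\star_0}}\lozenge T(x)$ holds iff there is some fixed point $v$ with $v\in V^{\star_0}_1(x)$, i.e.\ iff $x^{\star_0}$ is \textsc{true} in some fixed point. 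On the arithmetic side, $(\lozenge T(x))^\star$ is $\exists X(\mathsf{Fix}(X)\wedge \ulcorner x^{\star_0}\urcorner\in X)$, and since $\mathsf{Fix}$ defines exactly the set of (codes of true sentences of) fixed points and $T(x)^\star=x$, this says precisely that some fixed point makes $x^{\star_0}$ true. The $F$ case is identical except that $F(x)^\star=\mathit{Neg}(x^\star)$, so we use the fact (Remark~\ref{consistency-remark}, via the falsity-as-truth-of-negation convention) that $x^{\star_0}$ is \textsc{false} in a fixed point iff $\neg x^{\star_0}$ is \textsc{true} there, which is what ``$\ulcorner\mathit{Neg}(x^{\star_0})\urcorner\in X$'' expresses. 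The $\Box$ cases are dual, quantifying universally over $X$ with $\mathsf{Fix}(X)$, and the ``$\notin X$'' clauses handle $\Box\neg$ and the negated possibility statements correctly since membership in $X$ tracks \textsc{truth} in the corresponding fixed point. Combining the clauses, $\mathbf{FIX},w\Vdash_{V^{\star_0}}\varphi$ iff $\varphi^\star$ holds of $x^{\star_0}$, and together with the reduction in the previous paragraph this gives: $x^{\star_0}\in A$ iff $\varphi^\star$ holds of $x^{\star_0}$, i.e.\ $\varphi^\star$ defines $A$.

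The main obstacle, and the one place that needs genuine care rather than bookkeeping, is making precise the coding that underlies $\mathsf{Fix}$ and checking that the second-order formula $\mathsf{Fix}(X)$ really does pick out the set of codes of sentences true in some fixed point, uniformly enough that the existential/universal quantifiers over $X$ in the translation range over exactly the fixed points in $\mathbf{FIX}$ — in particular that $\mathbf{FIX}$ being the set of \emph{all} fixed points is what licenses reading $\exists X(\mathsf{Fix}(X)\wedge\dots)$ as ``some fixed point satisfies $\dots$'' and $\forall X(\mathsf{Fix}(X)\to\dots)$ as ``every fixed point satisfies $\dots$''. One must also confirm that the arithmetization of the Kripke jump and its fixed points is $\Pi^1_1$-expressible (or at least $\mathcal{L}_2$-expressible) so that $\mathsf{Fix}$ exists as claimed; this is standard but should be cited. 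Once that correspondence is in hand, the rest is a mechanical induction.
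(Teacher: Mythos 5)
Your proposal is correct and matches the paper's intent: the paper disposes of this lemma with the one-line remark that it ``follows directly by analyzing the semantics of the modal language,'' and your clause-by-clause unwinding (disjunctions, the $\Box$/$\lozenge$ atoms, falsity as truth-of-negation via $\mathit{Neg}$, and the observation that quantifying over $X$ with $\mathsf{Fix}(X)$ ranges over exactly the fixed points) is precisely that analysis spelled out. Your closing caveat about the expressibility of $\mathsf{Fix}$ is handled in the paper not in this lemma but in the following theorem, via the citation to Burgess.
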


\begin{proof}
    This follows directly by analyzing the semantics of the modal language.
\end{proof}

\begin{theorem}
    Every modally definable set is $\mathsf{Boolean}\text{-}\Sigma^1_1$-definable in the language of second-order arithmetic.
\end{theorem}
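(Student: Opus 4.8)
The plan is to push the modal definition through the translation $(\cdot)^\star$ and simply read off the descriptive complexity of the resulting $\mathcal{L}_2$ formula. By the discussion preceding the definition of $(\cdot)^\star$, any $\mathbf{FIX}$-definable $A$ is defined by some $\varphi$ in basic form, and by the preceding lemma $\varphi^\star$ defines $A$ over the standard model of second-order arithmetic. So everything reduces to bounding the complexity of $\varphi^\star$.

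First I would pin down the complexity of $\mathsf{Fix}$ itself, which is where the real content sits. The claim is that $\mathsf{Fix}$ may be taken \emph{arithmetical} in the set variable $X$. The point is that being a Kripke fixed point is a \emph{locally} checkable condition: $X$ is (the extension of) a fixed point iff, at every sentence $\varphi$, the appropriate strong Kleene equation holds, relating the membership of $\varphi$ in $X$ (and, via $Neg$ and Remark \ref{consistency-remark}, the membership governing falsity) to the memberships of $\varphi$'s immediate subformula instances. For atomic arithmetic $\varphi$ this is decidable; for $\mathsf{True}(t)$, $\neg\psi$, and $\psi_1\wedge\psi_2$ it is a boolean condition on memberships; and for $\forall y\,\psi(y)$ it carries only a single first-order quantifier ranging over the instances $\psi(\bar n)$. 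Each such equation is arithmetical in $X$ uniformly in $\varphi$, and universally quantifying over $\varphi$ keeps it arithmetical. That this system of equations holds exactly at the fixed points is then a routine induction on formula complexity. In particular one never has to \emph{evaluate} the strong Kleene satisfaction relation independently --- as a map of $X$ that relation is only $\Delta^1_1$, not arithmetical --- because on a genuine fixed point it already coincides with membership in $X$.

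Once $\mathsf{Fix}$ is arithmetical, the rest is bookkeeping. Each conjunct of the form $(\lozenge {}^{+}_{\neg} {}^{T}_{F}(x))^\star$ is, by definition, $\exists X(\mathsf{Fix}(X)\wedge\theta)$ with $\theta$ arithmetical, hence $\Sigma^1_1$; each conjunct $(\Box {}^{+}_{\neg} {}^{T}_{F}(x))^\star$ is $\forall X(\mathsf{Fix}(X)\to\theta)$, hence $\Pi^1_1$. A basic-form formula is a disjunction of conjunctions of such conjuncts; contracting the leading set quantifiers, each disjunct is equivalent to $\sigma_i\wedge\pi_i$ with $\sigma_i$ being $\Sigma^1_1$ and $\pi_i$ being $\Pi^1_1$, and since every $\Pi^1_1$ formula is trivially a boolean combination of $\Sigma^1_1$ formulas, the whole of $\varphi^\star$ is a finite boolean combination of $\Sigma^1_1$ formulas. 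By the preceding lemma, this $\mathsf{Boolean}\text{-}\Sigma^1_1$ formula defines $A$, which is what we wanted.

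I expect the only genuine obstacle to be this complexity bound on $\mathsf{Fix}$: it is tempting, but mistaken, to infer from the $\Pi^1_1$-completeness of the minimal fixed point (equivalently, the non-arithmeticity of the strong Kleene truth set) that \emph{recognizing} fixed points must be comparably hard. Keeping clear that deciding whether a given $X$ is a fixed point is just verifying an arithmetical system of equations is exactly what confines the translation to the boolean closure of $\Sigma^1_1$. The remaining ingredients --- the normal-form reduction to basic form (Theorem \ref{carnap} and the surrounding discussion) and the closure of the classes $\Sigma^1_1$ and $\Pi^1_1$ under the relevant operations --- are routine and already in place.
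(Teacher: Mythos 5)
Your proposal is correct, and it reaches the paper's conclusion by the same overall route (translate a basic-form definition via $(\cdot)^\star$, bound the complexity of $\mathsf{Fix}$, and read off the Boolean-$\Sigma^1_1$ bound), but it differs on the one step where the real content lies. The paper simply cites \cite{burgess1986truth} for the fact that the set of fixed points is $\Delta^1_1$-definable and then chooses a $\Sigma^1_1$ formula $\mathsf{Fix}$; the bookkeeping that turns this into a $\mathsf{Boolean}\text{-}\Sigma^1_1$ bound (diamond clauses $\Sigma^1_1$, box clauses $\Pi^1_1$, close under finite Boolean combinations) is left implicit, and you spell it out correctly. You instead prove a stronger bound outright: that $\mathsf{Fix}(X)$ may be taken \emph{arithmetical} in $X$, by characterizing fixed points as exactly the consistent sets of sentence codes satisfying the local strong Kleene compositional equations (including the transparency clauses $\mathsf{True}(\ulcorner\varphi\urcorner)\in X \leftrightarrow \varphi\in X$ and the falsity clauses routed through $Neg$), with the equivalence established by induction on formula structure since the three-valued evaluation is the unique assignment satisfying those clauses over the given atomic data. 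That argument is sound, and your warning is well taken: the $\Pi^1_1$-hardness of the minimal fixed point concerns the evaluation map, not the recognition problem for fixed points, so no circularity arises. What your route buys is self-containedness and a sharper complexity statement (arithmetical rather than merely $\Delta^1_1$ in $X$), at the cost of carrying out the induction the paper outsources to Burgess; either bound feeds into the same final computation, since $\exists X(\mathsf{Fix}(X)\wedge\theta)$ is $\Sigma^1_1$ and $\forall X(\mathsf{Fix}(X)\to\theta)$ is $\Pi^1_1$ whenever $\mathsf{Fix}$ is $\Sigma^1_1$ (in particular when it is arithmetical) and $\theta$ is arithmetical.
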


\begin{proof}
    The set of fixed points is $\Delta^1_1$-definable (see \cite[\textsection 6]{burgess1986truth}). In particular, we can choose a $\Sigma^1_1$ formula $\mathsf{Fix}$ defining the set of fixed points.
\end{proof}

The desired indefinability result then follows from the following theorem:

\begin{theorem}[\cite{burgess1986truth}]
    The set of intrinsic sentences is $\Sigma^1_1$-complete in a $\Pi^1_1$ parameter.
\end{theorem}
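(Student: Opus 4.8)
The plan is to prove the statement in two halves: an upper bound placing the set $I$ of intrinsic sentences in $\Sigma^1_1(P)$ for a fixed $\Pi^1_1$ parameter $P$, and a hardness argument showing $I$ is $\leq_m$-hard for $\Sigma^1_1(P)$. For the parameter I would take $P := \{\varphi \in \mathcal{L}_T : \varphi$ is \textsc{false} in no fixed point$\}$. Since the set of fixed points is $\Delta^1_1$ (as used earlier), ``$\varphi$ is \textsc{false} in some fixed point'' is $\Sigma^1_1$, so $P$ is $\Pi^1_1$; and $P$ is mutually $\leq_m$-reducible with the other natural $\Pi^1_1$-complete sets in the area (the paradoxical sentences, the grounded sentences), so the precise choice will not matter.

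For the upper bound, note that by definition $\varphi$ is intrinsic iff some intrinsic fixed point assigns it a binary value, so $I=\{\varphi : \exists w\,(w\text{ an intrinsic fixed point}\wedge w\text{ decides }\varphi)\}$. Naively this is only $\Sigma^1_2$, since ``$w$ is an intrinsic fixed point'' — as literally defined, $w$ a fixed point and $\{$sentences \textsc{true} in $w\}\cup\{$sentences \textsc{true} in $v\}$ consistent for every fixed point $v$ — carries an extra set quantifier over fixed points. The crux is that this quantifier is eliminable in favour of $P$: a fixed point $w$ is intrinsic iff it never assigns a sentence a binary value that some fixed point reverses, i.e.\ iff every sentence \textsc{true} in $w$ lies in $P$ and the negation of every sentence \textsc{false} in $w$ lies in $P$. (This is Kripke's observation that the intrinsic fixed points are exactly those below the maximum intrinsic fixed point $\mathbf{M}$.) With this local characterization, ``$w$ is an intrinsic fixed point'' is $\Delta^1_1(P)$ — being a fixed point is $\Delta^1_1$, and the containment clauses are $\Pi^0_1$ in $w$ and $P$ — so $\exists w(\dots)$ puts $I$ in $\Sigma^1_1(P)$, as required.

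For hardness I would reduce to $I$ a canonical $\Sigma^1_1(P)$-complete set, e.g.\ the set of indices of $P$-recursive trees that are ill-founded, by a uniform self-referential construction. Given such a tree $T$, use the diagonal lemma to build a $T$-indexed family $\{A_\sigma : \sigma\in T\}$ of $\mathcal{L}_T$-sentences, where $A_\sigma$ ``says'' something about the truth values of $\mathsf{True}(\ulcorner A_{\sigma'}\urcorner)$ for the immediate successors $\sigma'$ of $\sigma$ in $T$ (together with the $P$-recursive arithmetical fact ``$\sigma\in T$''), and put $\varphi_T := A_{\langle\rangle}$. The construction must be engineered so that $\varphi_T$ is intrinsic iff $T$ is ill-founded; the $\Pi^1_1$ parameter enters because the trees in the complete set are only recursive in $P$, so the clauses ``$\sigma\in T$'' are genuine $\mathcal{L}$-sentences about $P$, and because one must check $T\mapsto\ulcorner\varphi_T\urcorner$ is effective.

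The hardness construction is the main obstacle, and the delicate point is making \emph{intrinsicality} — rather than groundedness or mere inevitability — track ill-foundedness. The naive ``truth-teller trees'' (taking $A_\sigma$ to be the conjunction, or the disjunction, of the $\mathsf{True}(\ulcorner A_{\sigma'}\urcorner)$) only show that $A_{\langle\rangle}$ is grounded iff $T$ is well-founded, which re-proves that the grounded sentences are $\Pi^1_1$-complete but says nothing about the always-intrinsic grounded region. One must instead weave in the tautology-teller phenomenon — the reason $\gamma$ is intrinsic while $\tau\vee\neg\tau$ is not — so that when $T$ has an infinite branch, $A_{\langle\rangle}$ comes out \textsc{true} in some fixed point all of whose \textsc{true} sentences lie in $P$, i.e.\ in an intrinsic fixed point (hence in $\mathbf{M}$), whereas when $T$ is well-founded every fixed point deciding $A_{\langle\rangle}$ does so only by committing to a binary assignment that some other fixed point reverses, so no intrinsic fixed point decides it. Verifying this ``only by committing'' claim in the well-founded case — that $A_{\langle\rangle}$ is then genuinely undecided by $\mathbf{M}$ — is the heart of the argument; the remainder (soundness of the coding, uniformity, and confirming the completeness level is exactly ``$\Sigma^1_1$ in a $\Pi^1_1$ parameter'') is bookkeeping.
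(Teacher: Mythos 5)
First, note that the paper does not prove this statement at all: it is imported verbatim from \cite{burgess1986truth}, so there is no in-paper argument to compare against. Judged on its own terms, your proposal establishes at most half of the theorem. The upper-bound half is essentially right and is the standard route: the characterization ``$w$ is intrinsic iff no sentence to which $w$ assigns a binary value is reversed by any other fixed point'' does eliminate the universal quantifier over fixed points in favour of the $\Pi^1_1$ parameter $P$ (this is just a rearrangement of the compatibility definition, together with the fact that compatible fixed points have a common extension, so classical consistency of the union and absence of direct conflicts coincide), and using the $\Sigma^1_1$ side of the $\Delta^1_1$ definition of ``fixed point'' one indeed gets that the intrinsic sentences are $\Sigma^1_1(P)$.

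The genuine gap is the hardness direction, which you describe but do not prove. You yourself flag that making \emph{intrinsicality}, rather than groundedness or inevitability, track ill-foundedness of a $P$-recursive tree is ``the heart of the argument,'' and that the well-founded case (showing the coded sentence is undecided by every intrinsic fixed point, i.e.\ by the maximal intrinsic fixed point $\mathbf{M}$) is left unverified; no construction of the sentences $A_\sigma$ is actually given, and the naive truth-teller trees you mention provably do not work. Since the theorem asserts \emph{completeness}, not mere membership in $\Sigma^1_1(P)$, this omission is fatal to the proposal as a proof; and it is precisely the half the paper needs, because the subsequent corollary (non-definability of the intrinsic sentences) follows from the lower bound, not from the $\Sigma^1_1(P)$ upper bound. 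A further, smaller, soft spot is the claim that the exact choice of $\Pi^1_1$ parameter ``will not matter'': completeness relative to a parameter is sensitive to which parameter is fixed, and this needs to be pinned down (Burgess works with a specific one) rather than waved at.
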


\begin{corollary}
    The set of intrinsic sentences is not modally definable.
\end{corollary}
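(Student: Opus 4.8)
The plan is to obtain the corollary as an immediate consequence of the two preceding theorems: the one placing every modally definable set in $\mathsf{Boolean}\text{-}\Sigma^1_1$, and Burgess's theorem placing the set of intrinsic sentences properly above that level. So the only real content is a complexity-theoretic separation. Suppose, toward a contradiction, that the set $I$ of intrinsic sentences is modally definable. Then by the theorem that every modally definable set is $\mathsf{Boolean}\text{-}\Sigma^1_1$-definable, $I$ is a finite Boolean combination of $\Sigma^1_1$ subsets of $\mathcal{L}_T$. On the other hand, Burgess's theorem says that $I$ is $\Sigma^1_1$-complete in a $\Pi^1_1$ parameter, i.e., $I$ is $\leq_m$-complete (equivalently, complete under continuous preimages) for the pointclass $\Sigma^1_1(\mathcal{O})$ of sets that are $\Sigma^1_1$ in a fixed $\Pi^1_1$-complete oracle $\mathcal{O}$. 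Thus it suffices to show that no set complete for $\Sigma^1_1(\mathcal{O})$ lies in $\mathsf{Boolean}\text{-}\Sigma^1_1$.

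To see this, I would record two routine facts. First, $\mathsf{Boolean}\text{-}\Sigma^1_1$ is closed under complementation and under continuous preimages. Second, $\mathsf{Boolean}\text{-}\Sigma^1_1 \subseteq \Sigma^1_1(\mathcal{O})$: writing a Boolean combination in disjunctive normal form, each positive conjunct is $\Sigma^1_1 \subseteq \Sigma^1_1(\mathcal{O})$ and each negative conjunct is $\Pi^1_1 \subseteq \Delta^1_1(\mathcal{O})$, and $\Sigma^1_1(\mathcal{O})$ is closed under finite unions and under intersection with $\Delta^1_1(\mathcal{O})$ sets. Now if some $\Sigma^1_1(\mathcal{O})$-complete set were in $\mathsf{Boolean}\text{-}\Sigma^1_1$, then by completeness and closure under continuous preimages we would get $\Sigma^1_1(\mathcal{O}) \subseteq \mathsf{Boolean}\text{-}\Sigma^1_1$, hence equality; but then $\Sigma^1_1(\mathcal{O})$ would be self-dual, contradicting the standard fact that $\Sigma^1_1(\mathcal{O}) \neq \Delta^1_1(\mathcal{O})$ (there is a $\Sigma^1_1(\mathcal{O})$ set that is not $\Pi^1_1(\mathcal{O})$). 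Since $I$ is $\Sigma^1_1(\mathcal{O})$-complete by Burgess's theorem but, under our assumption, lies in $\mathsf{Boolean}\text{-}\Sigma^1_1$, this is the desired contradiction, and we conclude that $I$ is not modally definable.

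The only delicate point is fixing the precise reading of Burgess's phrase ``$\Sigma^1_1$-complete in a $\Pi^1_1$ parameter'' and confirming the closure/containment facts used above; given those, the argument is standard descriptive set theory and essentially a one-liner. If one prefers to avoid invoking a canonical $\Pi^1_1$-complete parameter, the same conclusion follows by observing that $\mathsf{Boolean}\text{-}\Sigma^1_1$ occupies only a bounded level of the difference hierarchy over $\Sigma^1_1$, whereas a set complete for ``$\Sigma^1_1$ in a $\Pi^1_1$ parameter'' does not, again via non-self-duality. Either packaging reduces the corollary to the two preceding theorems plus classical facts about the analytical hierarchy.
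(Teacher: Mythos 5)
Your proposal is correct and takes exactly the paper's route: the paper also derives the corollary by combining the two preceding theorems, with its proof consisting of the one-line assertion that no set $\Sigma^1_1$-complete in a $\Pi^1_1$ parameter admits a $\mathsf{Boolean}\text{-}\Sigma^1_1$ definition. You simply spell out that assertion (containment of $\mathsf{Boolean}\text{-}\Sigma^1_1$ in the relativized class, closure under preimages and complements, and non-self-duality), which is a sound and standard filling-in of the same argument.
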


\begin{proof}
    No $\Sigma^1_1$-complete in a $\Pi^1_1$ parameter set admits a $\mathsf{Boolean}\text{-}\Sigma^1_1$ definition.
\end{proof}

\subsection{Indiscernibility}

The previous result demonstrates the non-definability of a great number of sets. Nevertheless, it is a coarse tool. We can prove the indefinability of the intrinsic sentences in a more informative way. 

Informally, sets $\mathcal{A}$ and $\mathcal{B}$ are \textbf{$\mathbf{FIX}$-indiscernible} just in case every formula that applies to the $\mathcal{A}$s applies to the $\mathcal{B}$s and vice-versa. We will prove that the intrinsic sentences and the inevitable sentences are $\mathbf{FIX}$-indiscernible, whence the set of intrinsic sentences---the smaller of the two sets---cannot be definable.

\begin{definition}\label{indiscernibility}
    $\mathcal{A}$ and $\mathcal{B}$ are $\mathbf{FIX}$\emph{-indiscernible} if for every $\varphi(x)\in \mathcal{L}_\Box$, TFAE:
    \begin{enumerate}
        \item For all $\star$ with $x^\star\in \mathcal{A}$, $\mathbf{FIX}\Vdash_\mathbf{V^\star} \varphi(x)$.
        \item For all $\star$ with $x^\star \in \mathcal{B}$, $\mathbf{FIX}\Vdash_\mathbf{V^\star} \varphi(x)$.
    \end{enumerate}
\end{definition}

\begin{theorem}\label{ind}
    The set of intrinsic sentences and the set of inevitable sentences are $\mathbf{FIX}$-indiscernible.
\end{theorem}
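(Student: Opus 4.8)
The plan is to show that no $\mathcal{L}_\Box$ formula separates the intrinsic sentences from the inevitable sentences. By Proposition~\ref{intensional-dfn} and the discussion of basic form, it suffices to consider formulas built as Boolean combinations of the ``intensional atoms'' ${}_\lozenge^\Box\,{}_\neg^+\,{}_F^T(x)$; since intrinsicality implies inevitability, one direction of Definition~\ref{indiscernibility} is immediate, and the real content is: whenever some realization $\star$ with $x^\star$ \emph{inevitable} witnesses a given intensional $1$-isolator $\iota(x)$, there is another realization $\star'$ with $x^{\star'}$ \emph{intrinsic} witnessing the same $\iota(x)$ (and hence satisfying exactly the same intensional $1$-formulas). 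So the crux is to determine, for each of the $6$ intensional $1$-isolators for $\mathbf{S5[Con,Ground]}$, whether it is realized by an intrinsic sentence, and to check that the inevitable sentences realize no more of them than the intrinsic ones do.

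First I would enumerate the relevant intensional $1$-isolators. Inevitability is defined by $(\lozenge T(x)\wedge\neg\lozenge F(x))\vee(\neg\lozenge T(x)\wedge\lozenge F(x))$; by symmetry it is enough to treat the $\lozenge T(x)\wedge\neg\lozenge F(x)$ disjunct. Among the $6$ intensional $1$-isolators, those compatible with $\lozenge T(x)\wedge\neg\lozenge F(x)$ are exactly the two formulas $\lozenge T(x)\wedge\neg\lozenge F(x)\wedge\lozenge N(x)$ and $\lozenge T(x)\wedge\neg\lozenge F(x)\wedge\neg\lozenge N(x)$ (the combination $\neg\lozenge T(x)\wedge\neg\lozenge F(x)\wedge\neg\lozenge N(x)$ is $\mathbf{S5[Con]}$-refutable). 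So I need: every inevitable sentence realizing one of these two isolators is $\mathbf{FIX}$-indiscernible from some intrinsic sentence realizing the same isolator. For the second isolator, $x^\star$ is grounded-true (true in every fixed point), e.g.\ $x^\star$ is $0=0$; such a sentence is true in the minimum fixed point, which is intrinsic, so these witnesses are already intrinsic. For the first isolator, the canonical inevitable-but-not-intrinsic example is $\tau\vee\neg\tau$; I must exhibit an \emph{intrinsic} sentence realizing $\lozenge T(x)\wedge\neg\lozenge F(x)\wedge\lozenge N(x)$. The tautology-teller $\gamma$ does exactly this: $\gamma$ is true in some fixed point, false in none, neither in the minimum fixed point, and it is intrinsic (it lies in an intrinsic fixed point, indeed the minimum one extended appropriately, or directly: making $\gamma$ true forces no arbitrary binary assignments). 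Hence both isolators compatible with inevitability are realized by intrinsic sentences.

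Packaging this: since every intensional $1$-formula is an $\mathbf{S5[Con,Ground]}$-provable disjunction of intensional $1$-isolators (Theorem~\ref{carnap}), a formula $\varphi(x)$ holds of all inevitable sentences iff it holds of all sentences realizing the (at most four, by symmetry) isolators compatible with inevitability, and the same is true with ``inevitable'' replaced by ``intrinsic'' because those very isolators are each realized by \emph{both} an intrinsic and an inevitable sentence and by nothing outside inevitability. Thus conditions (1) and (2) of Definition~\ref{indiscernibility} coincide for every $\varphi(x)$, which is exactly $\mathbf{FIX}$-indiscernibility. I expect the main obstacle to be the bookkeeping ensuring that the \emph{set} of isolators realized by inevitable sentences is literally the same as the set realized by intrinsic sentences — one must rule out an isolator realized by some exotic inevitable sentence but by no intrinsic one — which reduces, via Carnap's classification, to the finite case check sketched above, with Remark~\ref{min-fix-point} and Remark~\ref{truth-teller} supplying the needed fixed points.
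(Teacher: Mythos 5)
Your proposal is correct and follows essentially the same route as the paper: reduce $\varphi$ to a Carnap normal form disjunction of intensional $1$-isolators, observe that inevitability picks out exactly the four isolators $\lozenge T\wedge\neg\lozenge F\wedge{}^{+}_{\neg}\lozenge N$ and $\neg\lozenge T\wedge\lozenge F\wedge{}^{+}_{\neg}\lozenge N$, and exhibit intrinsic witnesses ($\gamma$, $\neg\gamma$, $0=0$, $0=1$) for each, so that no such disjunct can hold of all intrinsic sentences while failing for some inevitable one. The only difference is presentational: the paper argues by contradiction on the missing disjunct, while you argue that the sets of isolators realized by intrinsic and by inevitable sentences coincide, which amounts to the same finite case check.
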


\begin{proof}
    Let $\mathcal{A}$ be the set of intrinsic and $\mathcal{B}$ the set of inevitable sentences. Then (2) implies (1) (Definition \ref{indiscernibility}) is trivial.

    It suffices to check that (1) implies (2). So assume (1), i.e., 
    $$\oplus \quad \text{For all $\star$ with $x^\star$ intrinsic, $\mathbf{FIX}\Vdash_\mathbf{V^\star} \varphi(x)$}.$$ 
    We may assume that $\varphi$ is an intensional formula (if it is not, just stick a $\Box$ in front of it). We may then assume that $\varphi$ is in normal form, i.e., $\varphi$ is a disjunction of intensional 1-isolators.

    Now the intensional 1-isolators are the following six formulas:
    \begin{enumerate}
        \item[(i)] $\psi_1(x):=\lozenge T(x) \wedge \lozenge F(x) \wedge \lozenge N(x)$
        \item[(ii)] $\psi_{2}(x):=\lozenge T(x) \wedge \neg \lozenge F(x) \wedge \lozenge N(x)$
        \item[(iii)] $\psi_{3}(x):=\neg \lozenge T(x) \wedge \lozenge F(x) \wedge \lozenge N(x)$
        \item[(iv)] $ \psi_{4}(x):=\lozenge T(x) \wedge \neg \lozenge F(x) \wedge \neg \lozenge N(x)$
        \item[(v)] $ \psi_{5}(x):=\neg \lozenge T(x) \wedge  \lozenge F(x) \wedge \neg \lozenge N(x)$
        \item[(vi)] $ \psi_{6}(x):=\neg \lozenge T(x) \wedge  \neg \lozenge F(x) \wedge  \lozenge N(x)$
    \end{enumerate}

Note that the set of inevitable sentences is defined by the disjunction $\psi_2(x)\vee \psi_3(x) \vee \psi_4(x)\vee \psi_5(x)$.

Suppose, for the sake of contradiction, that (2) fails. Then there is some $\star$ such that $y^\star=\psi$ is inevitable and that $\mathbf{FIX}\nVdash_\mathbf{V^\star} \varphi(y)$. So $\varphi$ must be missing one of the disjuncts required to define the inevitable sentences, i.e., $\psi_2(x)$,\dots,$\psi_5(x)$. Let's break into cases.

\emph{Case 1:} $\varphi$ is missing disjunct $\psi_2$. But $\gamma$ is intrinsic and $\psi_2(\gamma)$. So if $x^\star$ is $\gamma$, then $\mathbf{FIX}\nVdash_\mathbf{V^\star} \varphi(x)$, contra $\oplus$.

\emph{Case 2:} $\varphi$ is missing disjunct $\psi_3$. But $\neg \gamma$ is intrinsic and $\psi_3(\neg \gamma)$. So if $x^\star$ is $\neg \gamma$, then $\mathbf{FIX}\nVdash_\mathbf{V^\star} \varphi(x)$, contra $\oplus$.

\emph{Case 3:} $\varphi$ is missing disjunct $\psi_4$. But $0=0$ is intrinsic and $\psi_4(0=0)$. So if $x^\star$ is $0=0$, then $\mathbf{FIX}\nVdash_\mathbf{V^\star} \varphi(x)$, contra $\oplus$.

\emph{Case 4:} $\varphi$ is missing disjunct $\psi_5$. But $0=1$ is intrinsic and $\psi_5(0=1)$. So if $x^\star$ is $0=1$, then $\mathbf{FIX}\nVdash_\mathbf{V^\star} \varphi(x)$, contra $\oplus$.
\end{proof}

\begin{remark}
    If $\mathcal{A}$ and $\mathcal{B}$ are $\mathbf{FIX}$-indiscernible and $\mathcal{A}\subset \mathcal{B}$, then $\mathcal{A}$ is not $\mathbf{FIX}$-definable. Hence Theorem \ref{ind} entails that the set of intrinsic sentences is not $\mathbf{FIX}$-definable.
\end{remark}



\section{Multivariable Definability}\label{mvd}

The logic and combinatorics become more involved---but also more interesting---when there are many variables. 1-formulas are used to define \emph{sets} of sentences. 2-formulas (i.e., formulas with two distinct variables) are used to define \emph{binary relations} on sentences. Likewise, $n$-formulas (i.e., formulas with $n$ distinct variables) are used to define \emph{$n$-ary relations} on sentences.

\begin{definition}
An $\mathcal{L}_\Box$ formula $\varphi(x_1,\dots,x_n)$ \emph{defines} $\mathcal{A}$ over $\mathbf{FIX}$ just in case for every $\mathcal{L}_T$-realization $\star$:
$$\mathbf{FIX}\Vdash_{V^\star} \varphi(x_1,\dots,x_n) \Longleftrightarrow \langle x_1^\star,\dots,x_n^\star\rangle \in \mathcal{A}.$$
\end{definition}

\subsection{Counting Formulas}
Let's begin with some relatively easy counting problems. We will turn to more difficult counting problems in a later subsection.

\subsubsection{The Bare Modal System}

Carnap's normal form theorems generalize to $n$ variables. 
 We summarize the important information for $\mathbf{S5}$ with the following bullets:
\begin{itemize}
    \item The \emph{extensional $n$-isolators} are the formulas of the form:
    $$\mathrel{\substack{B \\ T_{only} \\ F_{only}  \\ N}}  (x_1) \wedge \dots \wedge \mathrel{\substack{B \\ T_{only} \\ F_{only}  \\ N}} (x_n)$$
    \item The \emph{intensional pre-n-isolators} are the formulas of the form:
    $$^{+}_{\neg} \lozenge \varphi_1 \wedge \dots \wedge ^{+}_{\neg} \lozenge \varphi_{4^n}$$ 
    where $\varphi_1,\dots,\varphi_{4^n}$ are the extensional $n$-isolators.
    \item $\mathbf{S5}$ refutes exactly one of these, namely:
    $$\neg \lozenge \varphi_1 \wedge \dots \wedge \neg\lozenge \varphi_{4^n}$$
    and the rest are the intensional $n$-isolators.
    \item The $n$-isolators are the consistent conjunctions $\varphi\wedge\psi$ for $\varphi$ an extensional $n$-isolator and $\psi$ an intensional $n$-isolator.
\end{itemize}

With this terminology on board, we can state the general version of Carnap's theorem:
\begin{theorem}[essentially \cite{carnap1946modalities}]
    For $T$ extending $\mathbf{S5}$:
    \begin{enumerate}
        \item Every extensional $n$-formula is $T$ equivalent to a disjunction of extensional $n$-isolators for $T$.
        \item Every intensional $n$-formula is $T$ equivalent to a disjunction of intensional $n$-isolators for $T$.
        \item Every $n$-formula is $T$ equivalent to a disjunction of $n$-isolators for $T$.
    \end{enumerate}
\end{theorem}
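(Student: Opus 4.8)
The plan is to reduce everything to propositional (Boolean) reasoning over finitely many atoms, after normalizing the modal structure using the characteristic $\mathbf{S5}$ reduction laws. I would prove (1) first, since it is the purely Boolean core. An extensional $n$-formula is a Boolean combination of the $2n$ atoms $T(x_1),F(x_1),\dots,T(x_n),F(x_n)$, so by classical disjunctive normal form it is, propositionally and hence $T$-provably, equivalent to a disjunction of \emph{complete} conjunctions of these atoms and their negations. Regrouping such a complete conjunction by variable, the pair $T(x_i),F(x_i)$ receives one of the four signings, which are exactly $B(x_i)$, $T_{only}(x_i)$, $F_{only}(x_i)$, $N(x_i)$; so each complete conjunction is one of the displayed extensional $n$-isolators. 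Deleting the $T$-inconsistent disjuncts (which changes nothing up to $T$-equivalence, with the empty disjunction understood as a contradiction) yields (1).

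For (2) the real work is to show that every intensional $n$-formula is $T$-equivalent to a Boolean combination of formulas $\lozenge\varepsilon$ with $\varepsilon$ an extensional $n$-isolator; granted this, (2) follows by a second application of the Boolean argument of (1), now to the $4^n$ atoms $\lozenge\varphi_1,\dots,\lozenge\varphi_{4^n}$ (the complete conjunctions over these atoms are the intensional pre-$n$-isolators, and deleting the $T$-inconsistent ones leaves exactly the intensional $n$-isolators). To obtain that Boolean combination I would induct on modal nesting depth, using the standard $\mathbf{S5}$ validities: $\Box\chi\leftrightarrow\neg\lozenge\neg\chi$, distribution $\lozenge(\alpha\vee\beta)\leftrightarrow(\lozenge\alpha\vee\lozenge\beta)$, and — crucially — the fact that under the universal accessibility of $\mathbf{S5}$ any modal subformula is globally constant and so factors out of a diamond, e.g. $\lozenge(\alpha\wedge\mu)\leftrightarrow(\mu\wedge\lozenge\alpha)$ whenever $\mu$ is a conjunction of (possibly negated) formulas of the form $\Box\beta$ or $\lozenge\beta$ (using $\mathbf{5}$ for the negated cases). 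Working inside out: by the inductive hypothesis the immediate modal subformulas of $\chi$ are Boolean combinations of the $\lozenge\varphi_j$, so $\chi$ is a Boolean combination of extensional atoms and such $\lozenge\varphi_j$; put $\chi$ in DNF as $\bigvee_k(\varepsilon_k\wedge\mu_k)$, apply the factoring law to get $\lozenge\chi\equiv\bigvee_k(\mu_k\wedge\lozenge\varepsilon_k)$, and rewrite each $\lozenge\varepsilon_k$ via (1) as $\bigvee_l\lozenge\varphi_{j_l}$. Iterating collapses all nesting; since an intensional formula has no extensional atom outside modal scope, the result has only the atoms $\lozenge\varphi_j$, as needed.

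Part (3) combines the two analyses. An arbitrary $n$-formula is a Boolean combination of extensional atoms $T(x_i),F(x_i)$ together with modal subformulas; the $\mathbf{S5}$ reduction of the previous paragraph rewrites every modal subformula as a Boolean combination of the atoms $\lozenge\varphi_j$, so the whole formula becomes a Boolean combination of the $2n+4^n$ atoms $T(x_i),F(x_i),\lozenge\varphi_j$. A third disjunctive normal form presents it as a disjunction of complete conjunctions over these atoms; each such conjunction is (extensional $n$-isolator) $\wedge$ (intensional pre-$n$-isolator), and deleting the $T$-inconsistent disjuncts leaves precisely the $T$-consistent conjunctions of an extensional $n$-isolator with an intensional $n$-isolator — that is, the $n$-isolators — and conversely every $n$-isolator is visibly of this shape, so nothing extraneous is produced.

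The main obstacle is the $\mathbf{S5}$ un-nesting step inside (2): one must justify the factoring laws for $\lozenge$ over conjunctions that already contain modalized subformulas and verify that the induction on modal depth really terminates in a Boolean combination of $\lozenge(\text{extensional})$ formulas. This is exactly the classical phenomenon that every $\mathbf{S5}$ formula is equivalent to one of modal degree at most $1$, and it is the point at which Carnap's original argument does its real work; everything else is bookkeeping with disjunctive normal forms and discarding $T$-inconsistent disjuncts.
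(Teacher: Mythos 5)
Your proof is correct, and it is essentially the argument the paper is implicitly relying on: the paper gives no proof of this theorem, citing Carnap, and your route (DNF over the extensional atoms, then the $\mathbf{S5}$ degree-reduction via distribution of $\lozenge$ over $\vee$ and the factoring law $\lozenge(\alpha\wedge\mu)\leftrightarrow(\mu\wedge\lozenge\alpha)$ for fully modalized $\mu$, then DNF over the atoms $\lozenge\varphi_j$, discarding $T$-inconsistent disjuncts) is the classical normal-form proof. You also correctly identify the one step carrying real content, namely the collapse to modal degree one; the rest is bookkeeping, exactly as you say.
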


Let's solve a counting problem.

\begin{proposition}
    Up to $\mathbf{S5}$-provable equivalence, the number of $n$-formulas is exactly: $2^{(4^n 2^{(4^n -1)})}$.

\end{proposition}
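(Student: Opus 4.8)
The plan is to count $n$-formulas up to $\mathbf{S5}$-equivalence by counting the $\mathbf{S5}$-consistent disjunctions of $n$-isolators, using the general version of Carnap's theorem (part 3): every $n$-formula is $\mathbf{S5}$-equivalent to a disjunction of $n$-isolators, and distinct subsets of the set of $n$-isolators yield $\mathbf{S5}$-inequivalent disjunctions (the empty disjunction being the contradiction). So the count is $2^{(\#\text{ of }n\text{-isolators})}$, and the whole problem reduces to counting the $n$-isolators for $\mathbf{S5}$.

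First I would recall from the bullets preceding the statement that there are exactly $4^n$ extensional $n$-isolators (one choice among $B, T_{only}, F_{only}, N$ for each of the $n$ variables), and that the intensional $n$-isolators are exactly those intensional pre-$n$-isolators $\,^{+}_{\neg}\lozenge\varphi_1 \wedge \dots \wedge \,^{+}_{\neg}\lozenge\varphi_{4^n}$ that are $\mathbf{S5}$-consistent, which (exactly as in the one-variable case, and as asserted in the third bullet) are all of the $2^{4^n}$ sign-patterns except the all-negated one; so there are $2^{4^n}-1$ intensional $n$-isolators. Next, the key combinatorial step: an $n$-isolator is a \emph{consistent} conjunction $\varepsilon\wedge\iota$ of an extensional $n$-isolator $\varepsilon$ with an intensional $n$-isolator $\iota$. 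As in the single-variable counting proofs, $\mathbf{S5}\vdash \varepsilon \to \lozenge\varepsilon$, so $\varepsilon\wedge\iota$ is consistent precisely when $\iota$ has $\lozenge\varepsilon$ (rather than $\neg\lozenge\varepsilon$) as its conjunct, and is inconsistent otherwise. Hence, for a \emph{fixed} extensional $n$-isolator $\varepsilon$, the intensional $n$-isolators consistent with it are those whose $\varepsilon$-conjunct is $\lozenge\varepsilon$ and whose remaining $4^n-1$ conjuncts range freely over the two signs — giving exactly $2^{4^n-1}$ of them (note the all-negated pattern is automatically excluded here, since the $\varepsilon$-conjunct is positive). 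Summing over the $4^n$ choices of $\varepsilon$ yields $4^n\cdot 2^{4^n-1}$ many $n$-isolators.

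Therefore the number of $n$-formulas up to $\mathbf{S5}$-equivalence is $2^{(4^n\, 2^{4^n-1})}$, as claimed. The one point needing a touch of care — the ``main obstacle,'' though it is mild — is justifying that the $2^{(\#\text{ of }n\text{-isolators})}$ disjunctions are pairwise $\mathbf{S5}$-inequivalent, i.e. that the $n$-isolators are not only jointly exhaustive (Carnap) but also mutually exclusive and individually consistent, so that no two distinct subsets collapse. This follows because distinct $n$-isolators are $\mathbf{S5}$-contradictory: two with different extensional parts disagree on some $\,^T_F(x_i)$ conjunct, and two with the same extensional part but different intensional parts disagree on some $\,^{+}_{\neg}\lozenge\varphi_j$ conjunct, which under $\mathbf{S5}$'s $\mathbf{5}$ axiom is decided uniformly across worlds; combined with consistency of each isolator, a disjunction of isolators $\mathbf{S5}$-entails exactly the disjunctions over its subsets and no others, so the map from subsets to equivalence classes is injective. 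This is the exact analogue of the reasoning already used in the proofs of the $\mathbf{S5}$, $\mathbf{S5[Con]}$, and $\mathbf{S5[Con,Ground]}$ one-variable counting propositions, now carried out with $4^n$ in place of $4$.
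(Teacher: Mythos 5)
Your proposal is correct and follows essentially the same route as the paper: count the $n$-isolators by pairing each of the $4^n$ extensional $n$-isolators with the $2^{4^n-1}$ intensional $n$-isolators containing $\lozenge\varepsilon$ positively, then take $2$ to that power via Carnap's normal form theorem. The only difference is that you spell out the mutual exclusivity and individual consistency of the isolators (so that distinct subsets give inequivalent disjunctions), a point the paper leaves implicit.
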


\begin{proof}
$\mathbf{S5}$ has $4^n$ extensional $n$-isolators and $2^{(4^n)}-1$ intensional $n$-isolators.

Each extensional $n$-isolator is inconsistent with the intensional 1-isolators claiming that it is impossible, and consistent otherwise.

So fixing a particular extensional $n$-isolator $\varepsilon$, there are $2^{(4^n -1)}$ many intensional $n$-isolators with $\lozenge \varepsilon$ as a conjunct.

So there are $4^n \times 2^{(4^n -1)}$ many $n$-isolators. So the number of $n$-formulas up to provable equivalence is 
$2^{(4^n 2^{(4^n -1)})}$.
\end{proof}

\subsubsection{With the Consistency Axiom}

Let's turn out attention to $\mathbf{S5[Con]}$. We summarize the important information in the following bullets:
\begin{itemize}
    \item The \emph{extensional $n$-isolators} are the formulas of the form:
    $$\mathrel{\substack{T \\ F \\ N}}  (x_1) \wedge \dots \wedge \mathrel{\substack{T \\ F \\ N}} (x_n)$$
    \item The \emph{intensional pre-n-isolators} are the formulas of the form:
    $$^{+}_{\neg} \lozenge \varphi_1 \wedge \dots \wedge ^{+}_{\neg} \lozenge \varphi_{3^n}$$ 
    where $\varphi_1,\dots,\varphi_{3^n}$ are the extensional $n$-isolators.
    \item $\mathbf{S5[Con]}$ refutes exactly one of these, namely:
    $$\neg \lozenge \varphi_1 \wedge \dots \neg\lozenge \varphi_{3^n}$$
    and the rest are the intensional $n$-isolators.
    \item The $n$-isolators are the consistent conjunctions $\varphi\wedge\psi$ for $\varphi$ an extensional $n$-isolator and $\psi$ an intensional $n$-isolator.
\end{itemize}

\begin{proposition}
    Up to $\mathbf{S5[Con]}$-provable equivalence, the number of $n$-formulas is exactly: $2^{(3^n 2^{(3^n -1)})}$.
\end{proposition}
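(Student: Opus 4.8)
The plan is to run the same counting argument used for $\mathbf{S5}$ and $\mathbf{S5[Con]}$ in the 1-variable case, now applied to the $n$-variable isolators for $\mathbf{S5[Con]}$ summarized in the preceding bullets. First I would record the two combinatorial facts already established: there are $3^n$ extensional $n$-isolators (one independent choice among $T,F,N$ for each of the $n$ variables), and the intensional $n$-isolators are exactly the $2^{(3^n)}$ pre-$n$-isolators minus the single one with all conjuncts negated, so there are $2^{(3^n)}-1$ of them.

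Next I would count the $n$-isolators, i.e.\ the $\mathbf{S5[Con]}$-consistent conjunctions $\varepsilon\wedge\iota$ with $\varepsilon$ an extensional $n$-isolator and $\iota$ an intensional $n$-isolator. The key observation, exactly as in the 1-variable proof, is that $\mathbf{S5[Con]}\vdash \varepsilon\to\lozenge\varepsilon$, so $\varepsilon\wedge\iota$ is consistent iff $\iota$ contains $\lozenge\varepsilon$ (rather than $\neg\lozenge\varepsilon$) as a conjunct; given such an $\iota$, the model witnessing its consistency can be taken to contain a world satisfying $\varepsilon$. Fixing $\varepsilon$, the intensional $n$-isolators with $\lozenge\varepsilon$ as a conjunct are obtained by freely choosing $\pm\lozenge$ for the remaining $3^n-1$ extensional $n$-isolators, of which there are $2^{(3^n-1)}$. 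Summing over the $3^n$ choices of $\varepsilon$ gives $3^n\cdot 2^{(3^n-1)}$ many $n$-isolators.

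Finally I would invoke the $n$-variable version of Carnap's theorem: every $n$-formula is $\mathbf{S5[Con]}$-equivalent to a disjunction of $n$-isolators. Since the $n$-isolators are pairwise inconsistent (any two distinct ones disagree on some conjunct) and each is $\mathbf{S5[Con]}$-consistent, distinct subsets of the set of $n$-isolators yield pairwise non-equivalent disjunctions — using the convention that the empty disjunction is $\bot$ to cover the inconsistent formula. Hence the map from subsets of the set of $n$-isolators to equivalence classes of $n$-formulas is a bijection, and the number of $n$-formulas up to $\mathbf{S5[Con]}$-provable equivalence is $2^{(3^n\cdot 2^{(3^n-1)})}$.

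I do not anticipate a genuine obstacle here, as the argument is a direct transcription of the earlier one-variable computation with $3^n$ in place of $3$; the only point requiring a little care is the claim that distinct disjunctions of $n$-isolators are inequivalent, which rests on the pairwise inconsistency and individual consistency of the $n$-isolators — the latter being precisely the content of (the $n$-variable analogue of) Lemma~\ref{isolator-completeness}-style reasoning, i.e.\ that consistency of each isolator is witnessed by an actual $\mathbf{S5[Con]}$ model.
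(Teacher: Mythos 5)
Your proposal is correct and follows essentially the same route as the paper: count the $3^n$ extensional and $2^{(3^n)}-1$ intensional $n$-isolators, observe that $\varepsilon\wedge\iota$ is consistent exactly when $\iota$ has $\lozenge\varepsilon$ as a conjunct (giving $3^n\cdot 2^{(3^n-1)}$ isolators), and apply the $n$-variable Carnap normal form to get $2^{(3^n 2^{(3^n-1)})}$ equivalence classes; you merely spell out the final bijection between subsets of isolators and equivalence classes, which the paper leaves implicit. One small caution: the consistency of each candidate isolator should indeed be witnessed by an abstract $\mathbf{S5[Con]}$ Kripke model as you say, not by a $\mathbf{FIX}$ realization in the style of Lemma \ref{isolator-completeness}, since some $\mathbf{S5[Con]}$-consistent isolators (e.g.\ those violating $\mathbf{Ground}$) are not $\mathbf{FIX}$-satisfiable.
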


\begin{proof}
$\mathbf{S5[Con]}$ has $3^n$ extensional $n$-isolators and $2^{(3^n)}-1$ intensional $n$-isolators.

Each extensional $n$-isolator is inconsistent with the intensional 1-isolators claiming that it is impossible, and consistent otherwise.

So fixing a particular extensional $n$-isolator $\varepsilon$, there are $2^{(3^n -1)}$ many intensional $n$-isolators with $\lozenge \varepsilon$ as a conjunct.

So there are $3^n \times 2^{(3^n -1)}$ many $n$-isolators. So the number of $n$-formulas up to provable equivalence is 
$2^{(3^n 2^{(3^n -1)})}$.
\end{proof}

\subsection{Matrices}

The combinatorics of definability is much harder when $n>1$. In this section, for the sake of developing intuitions, we focus on the 2-variable case. Let's begin by considering this matrix:
\begin{table}[ht]
    \centering
    \begin{tabular}{ccc}
        TT  & TF & TN   \\
        FT  & FF & FN   \\
        NT  & NF & NN 
    \end{tabular}
    \caption{All extensional 2-isolators.}
    \label{tab:my_label}
\end{table}

We identify each cell of the matrix with an extensional 2-isolator. In particular, we identify the cell AB with the formula $A(x_1)\wedge B(x_2)$. For instance, we identify the cell TN with the formula $T(x_1) \wedge N(x_2)$. This yields an association of extensional 2-isolators with pairs of numbers, via Table \ref{tab:numbering}. We will often consider subsets $\mathcal{S}$ of the $3\times 3$ matrix and we move freely between writing as though $\mathcal{S}$ contains NN, $N(x_1)\wedge N(x_2)$, and (3,3), trusting that no confusion will arise. 
\begin{table}[ht]
    \centering
    \begin{tabular}{ccc}
        1,1  & 1,2 & 1,3   \\
        2,1  & 2,2 & 2,3   \\
        3,1  & 3,2 & 3,3 
    \end{tabular}
    \caption{Standard numbering of the $3\times 3$ matrix.}
    \label{tab:numbering}
\end{table}

Each intensional 2-isolator for $\mathbf{S5[Con]}$ corresponds to a subset of the $3 \times 3$ matrix. In particular, each intensional 2-isolator has the form:
$$\bigwedge \{\lozenge \varphi \mid \varphi\in \mathcal{S}\} \wedge \bigwedge \{\neg\lozenge \varphi \mid \varphi \notin \mathcal{S}\}$$
where $\mathcal{S}$ is some subset of the matrix.

However, it is not the case that \emph{each} subset $\mathcal{S}$ of the matrix yields an intensional 2-isolator for $\mathbf{S5[Con]}$. The issue is that some of the subsets of the matrix correspond to $\mathbf{S5[Con]}$-refutable formulas, and intensional 2-isolators are consistent by definition. For instance, letting $\mathcal{S}$ be the empty set yields the formula:
$$\bigwedge \{\neg\lozenge \varphi \mid \varphi \notin \emptyset\}$$
which is refuted by $\mathbf{S5[Con]}$, whence it is not an intensional 2-isolator. 

The crucial question for us is to identify which subsets of the matrix actually reflects the intensional properties of $\mathcal{L}_T$ sentences. Let's review some examples.

Consider Table \ref{tab:table-true-row}, Table \ref{tab:table-violate}, Table \ref{tab:table-diagonal}, and Table \ref{tab:table-missing-corner}. Each of these images exhibits a subset of the $3\times 3$ matrix. In each case, consider the following question: Is there a variable assignment $\star$ such that $x_1^\star$ and $x_2^\star$ can attain all and only these combinations of values? Put another way, given a subset $\mathcal{S}$ of the $3\times 3$ matrix, is there a variable assignment $\star$ such that $\mathbf{FIX} \Vdash_{V^\star} \bigwedge \{\lozenge \varphi \mid \varphi\in \mathcal{S}\} \wedge \bigwedge \{\neg\lozenge \varphi \mid \varphi \notin \mathcal{S}\}$?

\begin{table}[H]
    \centering
    \begin{tabular}{ccc}
        TT  & TF     & TN            \\
        $\text{\xmark}$  & $\text{\xmark}$  &  $\text{\xmark}$\\
        $\text{\xmark}$ &  $\text{\xmark}$ & $\text{\xmark}$
    \end{tabular}
    \caption{A proper subset of the extensional 2-isolators.}
    \label{tab:table-true-row}
\end{table}

\begin{table}[H]
    \centering
    \begin{tabular}{ccc}
        TT           & TF           & $\text{\xmark}$ \\
        $\text{\xmark}$  & $\text{\xmark}$  &  $\text{\xmark}$\\
        $\text{\xmark}$ &  $\text{\xmark}$ & $\text{\xmark}$
    \end{tabular}
    \caption{A proper subset of the extensional 2-isolators.}
    \label{tab:table-violate}
\end{table}

\begin{table}[H]
    \centering
    \begin{tabular}{ccc}
        TT           &     $\text{\xmark}$       &  $\text{\xmark}$\\
        $\text{\xmark}$  & FF  &  $\text{\xmark}$ \\
        $\text{\xmark}$  &  $\text{\xmark}$ & NN 
    \end{tabular}
    \caption{A proper subset of the extensional 2-isolators.}
    \label{tab:table-diagonal}
\end{table}

\begin{table}[H]
    \centering
    \begin{tabular}{ccc}
        TT           & $\text{\xmark}$ \quad  & TN  \\
       $\text{\xmark}$  &  $\text{\xmark}$ \quad  & FN \\
       NT   &  $\text{\xmark}$ \quad   & $\text{\xmark}$
    \end{tabular}
    \caption{A proper subset of the extensional 2-isolators.}
    \label{tab:table-missing-corner}
\end{table}

The answers are given here:

\textbf{Table \ref{tab:table-true-row}:} Yes. Let $x_1^\star$ be a grounded truth (e.g., $0=0$) and $x_2^\star$ be a truth-teller.

\textbf{Table \ref{tab:table-violate}:} No. Note that the first two columns are non-empty but the third is empty. Thus, the axiom $\mathbf{Ground}$
$$ \big( \lozenge T(x) \wedge \lozenge F(x) \big) \to \lozenge N(x)  $$
rules out this possibility, and $\mathbf{FIX}\Vdash\mathbf{Ground}$.

\textbf{Table \ref{tab:table-diagonal}:} Yes. Let $x_1^\star$ and $x_2^\star$ be the same truth-teller.

\textbf{Table \ref{tab:table-missing-corner}:} No. However, one cannot see that there is no such assignment merely deploying the principles codified in $\mathbf{S5[Con,Ground]}$. Indeed, we must introduce the following axiom:
$$ \mathbf{Min_2} := \big(\lozenge N(x) \wedge \lozenge N(y)\big) \to \lozenge \big(N(x)\wedge N(y)\big)$$
Even though $\mathbf{S5[Con,Ground]}\nvdash \mathbf{Min_2}$, it is still the case that $\mathbf{FIX}\Vdash \mathbf{Min_2}$ (by Remark \ref{min-fix-point}). Note that the axiom $\mathbf{Min_2}$ rules out Table \ref{tab:table-missing-corner} since the third row is non-empty and the third column is non-empty but the corner element NN is missing.

This last example identifies the \emph{only} constraint on intensional 2-isolators that is not codified in the $\mathbf{S5[Con,Ground]}$ axioms.

\subsection{The Prime Conditions}

For the full completeness theorem we need new axioms:
$$\mathbf{Min_n}:= \big(\lozenge N(x_1) \wedge \dots \wedge \lozenge N(x_n)\big) \to \lozenge \big(N(x_1)\wedge\dots\wedge N(x_n)\big)$$

\begin{remark}
    For $n>k$, $\mathbf{Min_n}\vdash \mathbf{Min_k}$. To see this, just pick an instance of $\mathbf{Min_n}$ wherein some variables are repeated. For instance, to see that $\mathbf{Min_3}\vdash \mathbf{Min_2}$ note that the following instance of $\mathbf{Min_3}$:
    $$\big(\lozenge N(x_1) \wedge \lozenge N(x_1) \wedge \lozenge N(x_2)\big) \to \lozenge \big(N(x_1)\wedge N(x_1)\wedge N(x_2)\big)$$
    is (logically equivalent to) an instance of $\mathbf{Min_2}$.
\end{remark}

In the previous subsection we identified intensional $2$-isolators with subsets of the $3\times 3$ matrix. For the $n$-variable case, we must turn from matrices to \emph{tensors}, which are generalizations of matrices beyond two dimensions. In particular, we will be interested in $3^n$ tensors, i.e., $\underbrace{3\times\ldots \times 3}\limits_{\mbox{\scriptsize $n$ times}}$ matrices. 

Let $[3]^n$ be the set of $n$-tuples from the set $\{1,2,3\}$. This is the set of positions in a $3^n$ tensor. We can no longer speak only of rows and columns. Instead, we speak in terms of \emph{slices}. The rows are the 1-slices, the columns are the 2-slices, and so on. How do we speak of the first row, the second row, the third column, etc.? The 1st row is the 1st layer in the 1st slice, the third column is the 3rd layer in the second slice, and so on. We introduce the following notation.

\begin{definition}
    For $k\leq 3$ and $j\leq n$, the \emph{$k^{th}$ layer of the $j^{th}$ slice} (written $j_k$) is 
    $$\{a\in [3]^n \mid \exists b_1,\dots,\exists b_{n-1} \; a=\langle b_1,\dots b_{j-1}, k, b_{j},\dots,b_{n-1} \rangle\}$$
\end{definition}

Note that as $j_k$ is defined, it is a set of \emph{positions} in the $3^n$ tensor. That is, so far we have still only discussed positions. The tensors we are interested in are 0-1 valued tensors, i.e., assignments of 0 and 1 to these positions. It is easiest to think of these as just \emph{subsets} of $[3]^n$, and that is how we will proceed.


We now introduce a definition that will be crucial to the proofs of the remaining theorems:
\begin{definition}
    A tensor $\mathcal{S}\subseteq [3]^n$ meets the \emph{prime conditions} if:
    \begin{enumerate}
        \item $\mathcal{S}$ is non-empty.
        \item For all $j\leq n$, if $\mathcal{S}$ intersects $j_1$ and $j_2$, then $\mathcal{S}$ intersects $j_3$.
        \item For all $t\leq n$, for any $t$ slices $s^1,\dots,s^t$, if $\mathcal{S}$ intersects $s^i_3$ for each $i\leq t$, then $\mathcal{S}$ intersects $s^1_3 \cap \dots \cap s^t_3$.
    \end{enumerate}
\end{definition}

\begin{remark}
    In the case $n=2$, the prime conditions can be stated very simply (note that when we refer to row 1, column 2, etc. we have in mind Table \ref{tab:my_label}). We state them here to aid the reader's intuition. A set $\mathcal{S}$ of extensional 2-isolators meets the \emph{prime conditions} if:
    \begin{enumerate}
        \item $\mathcal{S}$ is non-empty.
        \item 
        \begin{enumerate}
            \item If $\mathcal{S}$ intersects row 1 and row 2, it intersects row 3.
            \item If $\mathcal{S}$ intersects column 1 and column 2, it intersects column 3.
        \end{enumerate}
        \item If $\mathcal{S}$ intersects row 3 and column 3, it contains (3,3).
    \end{enumerate}
\end{remark}

\subsection{The Main Lemma}

The main technical lemma of this paper connects the prime conditions with $\mathbf{FIX}$-satisfiability. That is, this lemma shows that those sets that satisfy the prime conditions are exactly the sets that reflect the intensional properties of $\mathcal{L}_T$ sentences.
\begin{lemma}\label{main-lemma-multi}
        For $\mathcal{S}\subseteq[3]^n$, TFAE:
    \begin{enumerate}
        \item $\mathcal{S}$ meets the prime conditions.
        \item There is an $\mathcal{L}_T$ realization $\star$ such that 
        $$\mathbf{FIX} \Vdash_{V^\star} \bigwedge \{\lozenge \varphi \mid \varphi\in\mathcal{S}\} \wedge \bigwedge \{\neg\lozenge \varphi \mid \varphi \notin \mathcal{S}\}.$$
    \end{enumerate} 
\end{lemma}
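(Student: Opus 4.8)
The plan is to prove both directions, with the substantive work being $(1)\Rightarrow(2)$. For $(2)\Rightarrow(1)$, I would argue contrapositively: each prime condition is forced by an axiom that $\mathbf{FIX}$ validates. Condition (1) corresponds to the fact that any realization has \emph{some} world, so $\mathbf{FIX}\Vdash_{V^\star}\lozenge(\text{true})$ and the conjunction of all $\neg\lozenge\varphi$ is refutable (it is an instance of the $\mathbf{S5[Con]}$-refutable pre-isolator). Condition (2) is exactly the content of $\mathbf{Ground}$ applied to the variable indexing slice $j$: if $x_j^\star$ is true somewhere and false somewhere, it is neither somewhere, and this fact is insensitive to the values of the other variables at those witnessing worlds only up to the slice structure — here I'd need to be a little careful, since $\mathbf{Ground}$ as stated is a $1$-formula; the right move is to observe that if $\mathcal{S}$ intersects $j_1$ and $j_2$ then by Remark~\ref{min-fix-point}(1) the variable $x_j^\star$ is ungrounded, hence \textsc{neither} in the minimum fixed point, and \emph{at that same world} every ungrounded $x_i^\star$ is also \textsc{neither} — which is precisely what delivers condition (3) as well. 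So really conditions (2) and (3) both fall out of the single fact about the minimum fixed point (Remark~\ref{min-fix-point}(2)): if $x_{i_1}^\star,\dots,x_{i_t}^\star$ are all ungrounded, the minimum fixed point makes them all \textsc{neither} simultaneously, and that world witnesses membership in $s^1_3\cap\dots\cap s^t_3$ (after checking the remaining coordinates at the minimum fixed point land somewhere in $\mathcal{S}$, which they do since $\mathcal{S}$ records \emph{all} attainable combinations).

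For $(1)\Rightarrow(2)$, the task is to construct, from a tensor $\mathcal{S}$ satisfying the prime conditions, an $\mathcal{L}_T$-realization $\star$ realizing exactly the combinations in $\mathcal{S}$. The natural strategy is to build $x_j^\star$ as a Boolean combination (disjunction over the relevant cells) of independent truth-tellers supplied by Remark~\ref{truth-teller}, together with grounded sentences $0=0$ and $0=1$ where a coordinate is forced to a binary value. Concretely: allocate to each cell $a\in\mathcal{S}$ a fresh independent truth-teller $\tau_a$ (or a conjunction/negation thereof to pin a binary value), and set $x_j^\star$ to be the disjunction, over $a\in\mathcal{S}$ with $j$-th coordinate $1$, of $\tau_a$, disjoined with the negations handling the "false" cells, etc. The key point is that since the $\tau_a$ are logically independent, a fixed point can set any subset of them to \textsc{true}, any to \textsc{false}, the rest \textsc{neither}; I then need to check that the resulting value-vector of $(x_1^\star,\dots,x_n^\star)$ ranges over exactly $\mathcal{S}$.

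The main obstacle — and where the prime conditions earn their keep — is verifying that no \emph{extra} combinations appear and that every combination in $\mathcal{S}$ is genuinely attainable. Attainability of $a\in\mathcal{S}$: set $\tau_a$ to the appropriate binary value and all other truth-tellers to \textsc{neither}; the strong Kleene clauses then force each $x_j^\star$ to exactly the $j$-th coordinate of $a$ — this is where I must be careful that, e.g., a disjunction evaluates to \textsc{neither} rather than \textsc{true} when all disjuncts are \textsc{neither}, which is why the "true" cells and "false" cells must be kept on disjoint sets of truth-tellers, and why condition (2) matters (a coordinate that is both sometimes-true and sometimes-false must also be allowed to go \textsc{neither}, which the strong Kleene disjunction/conjunction delivers for free). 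No spurious combinations: an arbitrary fixed point assigns values to all the $\tau_a$; I must show the induced vector lies in $\mathcal{S}$, and this is exactly where condition (3) is needed — when several coordinates are driven to \textsc{neither} by the Kleene clauses, condition (3) guarantees the resulting position is still in $\mathcal{S}$. I expect this last verification (spurious combinations are excluded precisely because $\mathcal{S}$ is "downward closed under simultaneous nullification" in the sense of the prime conditions) to be the technical heart, requiring a careful case analysis on which disjuncts/conjuncts in each $x_j^\star$ are \textsc{true}, \textsc{false}, or \textsc{neither} in the given fixed point, together with Remark~\ref{min-fix-point} to handle the interaction between coordinates.
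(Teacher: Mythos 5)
Your direction $(2)\Rightarrow(1)$ is fine and matches the paper, which simply invokes soundness of $\mathbf{S5[Con,Ground,Min_n]}$ over $\mathbf{FIX}$ via Remarks \ref{consistency-remark} and \ref{min-fix-point}; your observation that conditions (2) and (3) both come from the minimum fixed point is exactly right. The gap is in $(1)\Rightarrow(2)$, and it is twofold. First, the realization device you propose --- each $x_j^\star$ a disjunction of fresh independent truth-tellers allocated to the cells of $\mathcal{S}$ (with negations ``handling the false cells'') --- cannot produce a \textsc{false} coordinate at the worlds you use for attainability. To realize a cell $a$ you set $\tau_a$ to a binary value and leave the other truth-tellers \textsc{neither}; but a strong Kleene disjunction with a \textsc{neither} disjunct is never \textsc{false}, so any coordinate that is supposed to come out \textsc{false} at that world (while being \textsc{true} or \textsc{neither} at worlds realizing other cells) is unattainable by this construction, and keeping ``true cells'' and ``false cells'' on disjoint truth-tellers does not help. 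The paper's construction avoids this by a different device: it fixes a block of independent truth-tellers, lets $C_1,\dots,C_{2^k}$ enumerate all sign-patterns of the block (mutually exclusive, so at a world where the block is binary exactly one $C_s$ is \textsc{true} and the rest \textsc{false}), and takes each $A^i$ to be a conjunction of conditionals $C_j\to\theta$ with $\theta\in\{0=0,\,0=1,\,\lambda\}$; at such a world $A^i$ is equivalent to the consequent of the active conditional, which can be $0=1$, so falsity is deliverable.

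Second, your plan for excluding spurious combinations --- ``when several coordinates are driven to \textsc{neither}, condition (3) guarantees the resulting position is still in $\mathcal{S}$'' --- is not what condition (3) says: it only guarantees that $\mathcal{S}$ \emph{meets} the intersection of $3$-layers it already meets, not that $\mathcal{S}$ is closed under nullifying coordinates of its members (e.g.\ $\mathcal{S}=\{(1,1)\}$ or $\mathcal{S}=$ the top row satisfies the prime conditions without containing the nullified positions). The real issue is the minimum fixed point: if every $x_j^\star$ is ungrounded, the minimum fixed point realizes $(3,\dots,3)$, so whenever $(3,\dots,3)\notin\mathcal{S}$ some coordinate \emph{must} be realized by a grounded sentence. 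The paper therefore splits on whether $(3,\dots,3)\in\mathcal{S}$: in the negative case it proves from the prime conditions that $\mathcal{S}$ is contained in a single binary layer of some slice (its Claim \ref{containment}), fixes that coordinate as $0=0$ or $0=1$, and proceeds by induction on $n$ down to an explicit $n=2$ base case; in the positive case the conditional construction above works, because the only combinations it can realize besides the cells of $\mathcal{S}$ are the all-\textsc{neither} cell, which is in $\mathcal{S}$ by assumption. Your sketch mentions using $0=0$ and $0=1$ ``where a coordinate is forced to a binary value,'' but without the dichotomy and the containment claim there is no argument that this always suffices, and without a falsity-producing device the construction fails even in simple two-variable cases such as the diagonal $\{(1,1),(2,2),(3,3)\}$ (which the paper realizes by letting both variables be the \emph{same} truth-teller).
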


\begin{proof} \textbf{(2) implies (1):} This follows from soundness, i.e., 
$$\mathbf{FIX}\Vdash \mathbf{S5[Con,Ground,Min_n]}.$$ See Remark \ref{consistency-remark} and Remark \ref{min-fix-point}.

\textbf{(1) implies (2):} Suppose that $\mathcal{S}$ meets the prime conditions. There are two cases to consider.
\begin{enumerate}
    \item[(I)] $\mathcal{S}$ does not contain $(3,\dots,3)$.
    \item[(II)] $\mathcal{S}$ does contain $(3,\dots,3)$.
\end{enumerate}
\textbf{Case (I):} There are very few ways a set can meet the prime conditions and omit (3,\dots,3). The important thing to note is that:
\begin{claim}\label{containment}
    For some $j\leq n$ and some $i\leq 2$, $\mathcal{S}$ is contained in the $i$th layer of the $j$th slice.
\end{claim}
\begin{proof}
    To see that the claim holds, suppose that for all $j\leq n$, for all $i\leq 2$, $\mathcal{S}$ is not contained in the $i$th layer of the $j$th slice. Since $\mathcal{S}$ meets the prime conditions, $\mathcal{S}$ is non-empty. Thus, for every $j$, $\mathcal{S}$ must intersect \emph{some} layer of the $j$th slice. We reason as follows:
\begin{itemize}
    \item $\mathcal{S}$ is not contained in the first layer of the $j$th slice, so $\mathcal{S}$ intersects either the second or third layer of the $j$th slice.
    \item $\mathcal{S}$ is not contained in the second layer of the $j$th slice, so $\mathcal{S}$ intersects either the first or third layer of the $j$th slice.
\end{itemize}
It follows that either $\mathcal{S}$ intersects $j_3$ or it intersects both $j_1$ and $j_2$. In the latter case, the prime conditions imply that $\mathcal{S}$ intersects $j_3$. So either way, $\mathcal{S}$ intersects  $j_3$.

So, for every $j$, $\mathcal{S}$ intersects $j_3$. The prime conditions imply that $\mathcal{S}$ contains $(3,\dots,3)$, a contradiction. This completes the proof of the claim.
\end{proof}

With the claim on board we now set about finishing \textbf{Case (I)} by induction. The base case is $n=2$.

\textbf{Base Case:} We can split into the subcases provided by the claim ($\mathcal{S}$ is contained either in the first row, the second row, the first column, or the second column). Suppose for simplicity's sake that $\mathcal{S}$ is contained in the first row. There are only so many sets contained in the first row:
\begin{enumerate}
    \item[(i)] $\mathcal{S}$ is the first row. Let $A^\star$ be $0=0$ and $B^\star$ be $\tau$.
    \item[(ii)] $\mathcal{S}$ contains just the leftmost and the rightmost elements of the first row. Let $A^\star$ be $0=0$ and $B^\star$ be $ \gamma$.
    \item[(iii)] $\mathcal{S}$ contains the two rightmost elements of the first row. Let $A^\star$ be $0=0$ and $B^\star$ be $\neg \gamma$.
    \item[(iv)] $\mathcal{S}$ is just the leftmost element. Let $A^\star$ be 0=0 and $B^\star$ be 0=0.
    \item[(v)] $\mathcal{S}$ is just the middle element. Let $A^\star$ be 0=0 and $B^\star$ be 0=1.
    \item[(vi)] $\mathcal{S}$ is just the rightmost element. Let $A^\star$ be 0=0 and $B^\star$ be $\lambda$.
\end{enumerate}
By the prime conditions, $\mathcal{S}$ cannot contain just the two leftmost elements of the first row, so these cases are exhaustive.

The reasoning is similar if $\mathcal{S}$ is contained in the second row, first column, or second column. We leave these cases to the reader.

\textbf{Induction step:} Assume the induction hypothesis: For any $\mathcal{S}\subseteq [3]^n$ meeting the prime conditions, there is an appropriate realization $\star$. Now let $\mathcal{T}\subseteq [3]^{n+1}$ meet the prime conditions. By Claim \ref{containment}, $\mathcal{T}$ is contained in the $i$th layer of the $j$th slice. The key observation is that the $i$th layer of the $j$th slice \emph{is} a $3^{n}$ tensor. More precisely, the $j$th conjunct is the same in all the entries in $j_i$, so ignoring this conjunct in all entries yields a $3^{n}$ tensor $\mathcal{T}\restriction [3]^n$. Apply the induction hypothesis to get an $\mathcal{L}_T$ realization $\star$ such that 
        $$\mathbf{FIX} \Vdash_{V^\star} \bigwedge \{\lozenge \varphi \mid \varphi\in \mathcal{T}\restriction [3]^n\} \wedge \bigwedge \{\neg\lozenge \varphi \mid \varphi \notin \mathcal{T}\restriction [3]^n\}.$$
To extend $\star$ to a realization $\sharp$ for $\mathcal{T}$, just let 
$$x_j^\sharp= \begin{cases} 
      0=0 & \text{ if }i = 1 \\
      0=1 & \text{ if } i =2 \\
      \lambda & \text{ if } i=3
\end{cases}$$

\textbf{Case (II):} Assume that $(3,\dots,3)\in\mathcal{S}$. We must define sentences $A^1,\dots,A^n$. Pick some $k$ such that $2^k>3^n$. Take a set $\{\tau_1,\dots,\tau_k\}$ of independent truth-tellers (see Remark \ref{truth-teller}). There are $2^k$-many formulas of the form:
$$ ^{+}_{\neg} \tau_1 \wedge \dots \wedge ^{+}_{\neg} \tau_k$$
Fix some enumeration $C_1,\dots,C_{2^k}$ of these formulas. We then define the formulas $A^1_1,\dots,A^1_{3^n},\dots,A^n_1,\dots,A^n_{3^n}$ so that $A^i_j$ is a conditional of the form $C_j\to \theta$ where $\theta\in\{0=0,0=1,\lambda\}$.

More precisely, we proceed as follows. We fix a numbering $\Phi$ of the cells in the $3^n$ tensor. Then $\Phi(j)$ is some formula $\varphi_1(x_1)\wedge\dots \wedge\varphi_n(x_n)$. Let $\Phi(j)_i$ be the $i$th conjunct of $\Phi(j)$. We then define: 
    $$A^i_j= C_j \to  \begin{cases} 
      0=0 & \text{ if }\Phi(j)_i = T(x_i) \\
      0=1 & \text{ if }\Phi(j)_i = F(x_i) \\
      \lambda & \text{ if }\Phi(j)_i = N(x_i)
\end{cases}$$
We define $\mathcal{S}^\Phi:=\{k \leq 3^n\mid\Phi(k)\in\mathcal{S}\}$.

We define $A^i$ as follows:
$$A^i\equiv \bigwedge \{A^i_j \mid j \in \mathcal{S}^\Phi\}\wedge \bigwedge \{C_j\to \lambda \mid j\in \{1,\dots,2^k\}-\mathcal{S}^\Phi\}$$

Let $\star$ be a variable assignment such that $x_i^\star=A^i$ for each $i\leq n$.

\begin{claim}\label{positive-claim}
    If $\varphi\in\mathcal{S}$, then $\mathbf{FIX}\Vdash_{V^\star}\lozenge \varphi$.
\end{claim}

\begin{proof}
    Suppose that $\varphi\in\mathcal{S}$. Note that $\varphi$ is a conjunction of the form $\varphi_1(x_1)\wedge\dots\wedge \varphi_n(x_n)$. Note also that $\Phi^{-1}(\varphi)$ is some number $k\leq 3^n$ (by $\Phi^{-1}$ I mean the inverse of $\Phi$; so $\Phi^{-1}(\varphi)$ is just the number that $\Phi$ maps to $\varphi$). Since $C_k$ is a Boolean statement about independent truth-tellers, there is some $w$ such that $\mathbb{N},w\vDash_{\mathsf{K3}} C_k$ (see Remark \ref{truth-teller}).
    
    Let $i\leq n$. Note that $A^i_k$ is a conditional of the form $C_k\to \theta$.
    
    $A^i$ is a conjunction of conditionals with antecedent $C_j$ such that for $j\neq k$, $\mathbb{N},w\Dashv_{\mathsf{K3}} C_j$. Which is to say that for all conjuncts $A^i_j$ of $A^i$ except for $A^i_k$, $\mathbb{N},w\vDash_{\mathsf{K3}} A^i_j$. Thus, $\mathbb{N},w\vDash_{\mathsf{K3}}A^i\Leftrightarrow\mathbb{N},w\vDash_{\mathsf{K3}} A^i_k$. Since $\mathbb{N},w\vDash_{\mathsf{K3}} C_k$, we infer that $\mathbb{N},w\vDash_{\mathsf{K3}}A^i\Leftrightarrow \mathbb{N},w\vDash_{\mathsf{K3}}\theta$.

    We now want to prove that $\mathbf{FIX},w\Vdash_{V^\star}\varphi_i$, where $\varphi_i$ is the $i$th conjunct of $\varphi$. Let's break into cases depending on which formula $\varphi_i$ is.
    
    \emph{Case 1:} Suppose $\varphi_i$ is $T(x_i)$, i.e, $\varphi$ is in the 1st layer of the $i$th slice. Since $k = \Phi^{-1}(\varphi)$, $\Phi(j)_i =T(x_i)$. So, $\theta$ is $0=0$. So $\mathbb{N},w\vDash_{\mathsf{K3}}  A^i$. So $\mathbb{N},w\vDash_{\mathsf{K3}}  \mathsf{True}(\ulcorner A^i\urcorner)$. Since $x_i^\star=A^i$, $\mathbf{FIX},w\Vdash_{V^\star} T(x_i)$.
    
    \emph{Case 2:}  Suppose $\varphi_i$ is $F(x_i)$, i.e, $\varphi$ is in the 2nd layer of the $i$th slice. Since $k = \Phi^{-1}(\varphi)$, $\Phi(j)_i =F(x_i)$. So, $\theta$ is $0=1$.  So $\mathbb{N},w\Dashv_{\mathsf{K3}}  A^i$. So $\mathbb{N},w\vDash_{\mathsf{K3}}  \mathsf{True}(\ulcorner\neg A^i\urcorner)$. Since $x_i^\star=A^i$, $\mathbf{FIX},w\Vdash_{V^\star} F(x_i)$.
    
    \emph{Case 3:}  Suppose $\varphi_i$ is $N(x_i)$, i.e, $\varphi$ is in the 3rd layer of the $i$th slice. Since $k = \Phi^{-1}(\varphi)$, $\Phi(j)_i =N(x_i)$. So, $\theta$ is $\lambda$. So $\mathbb{N},w\uparrow_{\mathsf{K3}}  A^i$. So $\mathbb{N},w\nvDash_{\mathsf{K3}}  \mathsf{True}(\ulcorner A^i\urcorner)$ and $\mathbb{N},w\nvDash_{\mathsf{K3}}  \mathsf{True}(\ulcorner\neg A^i\urcorner)$. Since $x_i^\star=A^i$, $\mathbf{FIX},w\Vdash_{V^\star} N(x_i)$.

    So $\mathbf{FIX},w\Vdash_{V^\star}\varphi$. So for all $v\in\mathbf{FIX}$, $\mathbf{FIX},v\Vdash_{V^\star}\lozenge \varphi$. That is, $\mathbf{FIX}\Vdash_{V^\star}\lozenge \varphi$.
\end{proof}

\begin{claim}\label{negative-claim}
    If $\varphi\notin\mathcal{S}$, then $\mathbf{FIX}\nVdash_{V^\star}\lozenge \varphi$.
\end{claim}

\begin{proof}
We prove the contrapositive. Suppose $\mathbf{FIX}\Vdash_{V^\star} \lozenge \varphi$. Then there is some $w$ such that 
$$(\triangle) \quad \mathbf{FIX},w\Vdash_{V^\star} \varphi.$$ 
We now break into cases depending on whether any of the $C_s$ is \textsc{true} at $w$.


\emph{Case 1:} Suppose $\mathbb{N},w\uparrow_{\mathsf{K3}} C_s$ for all $s\leq 2^k$. We claim that $A^1,\dots,A^n$ all have value \textsc{neither} in $w$.

To see this, note that $A^i$ is a conjunction of conditionals each of which has an antecedent $C_s$, which has value \textsc{neither}. A conditional with a \textsc{neither} antecedent cannot be \textsc{false}, so $A^i$ cannot be \textsc{false}. $A_i$ is \textsc{true} only if all its conjuncts are \textsc{true}. Since $2^k>3^n\geq |\mathcal{S}^\Phi|$, there is some $j\leq 2^k$ such that $j\notin \mathcal{S}^\Phi$. But then by definition of $A^i$, $C_j\to\lambda$ is a conjunct of $A^i$, and this has value \textsc{neither}.

So $\varphi$ is the formula $N(x_1)\wedge \dots\wedge N(x_n)$. Since we are working in \textbf{Case (II)}, we have assumed that this formula belongs to $\mathcal{S}$.

\emph{Case 2:} Suppose $\mathbb{N},w\vDash_{\mathsf{K3}} C_s$ for exactly one $s$. We split into subcases depending on whether $s>3^n$ or $s\leq 3^n$.

\emph{Subcase 2.1:} Suppose $s>3^n$. $A^i$ is a conjunction of conditionals of the form $C_j\to \theta^A_i$ such that for $j\neq s$, $\mathbb{N},w\Dashv_{\mathsf{K3}} C_j$, whence $\mathbb{N},w\vDash_{\mathsf{K3}} A^i_j$. Accordingly, $\mathbb{N},w\uparrow_{\mathsf{K3}}A^i\Leftrightarrow \mathbb{N},w\uparrow_{\mathsf{K3}}\theta$, where $\theta$ is the consequent of $A^i_s$. Since $s>3^n$, $\theta$ is $\lambda$. Hence, $\mathbb{N},w\uparrow_{\mathsf{K3}} A^i$. Hence, $\mathbb{N},w\nVdash_{\mathsf{K3}}  \mathsf{True}(\ulcorner A^i\urcorner)$ and $\mathbb{N},w\nVdash_{\mathsf{K3}}  \mathsf{True}(\ulcorner\neg A^i\urcorner)$. Since $x_i^\star=A^i$, $\mathbf{FIX},w\Vdash_{V^\star} N(x_i)$.

Thus, $\varphi$ is $N(x_1)\wedge\dots\wedge N(x_n)$, i.e., (3, \dots, 3). Once again, since we are working in \textbf{Case (II)}, we have assumed that this formula belongs to $\mathcal{S}$.

\emph{Subcase 2.2:} Suppose that $s\leq 3^n$. As in the previous subcase, $\mathbb{N},w\vDash_{\mathsf{K3}}A^i_s \Leftrightarrow \mathbb{N},w\vDash_{\mathsf{K3}}\theta$ where $\theta$ is the consequent of $ A^i_s$.

Now $A^i$ itself is a conjunction of conditionals which is defined to include $A^i_s$ if and only if $s\in\mathcal{S}^\Phi$. Let's split into subsubcases depending on whether $s\in\mathcal{S}^\Phi$.

\emph{Subsubcase 2.2.1:} Suppose $s\in \mathcal{S}^\Phi$. Then $\mathbb{N},w\vDash_{\mathsf{K3}} A^i\Leftrightarrow \mathbb{N},w\vDash_{\mathsf{K3}}A^i_s$, where $A^i_s$ is $C_s\to \theta$. 

Note that, by the definition of $A^i_s$:
$$ (\circledast) \quad \theta \text{ is } 0=0 \text{ just in case } \Phi(s)_i=T(x_i).$$

Since $\mathbb{N},w\vDash_{\mathsf{K3}}C_s$, we infer that:
$$(\boxminus) \quad \mathbb{N},w\vDash_{\mathsf{K3}} A^i\Leftrightarrow \mathbb{N},w\vDash_{\mathsf{K3}}\theta.$$
We reason as follows:
\begin{flalign*}
    \varphi_i \text{ is }T(x_i) &\Longleftrightarrow \mathbf{FIX},w\Vdash_{V^\star}T(x_i) \text{ by ($\triangle)$}\\
    &\Longleftrightarrow \mathbb{N},w\vDash_{\mathsf{K3}}A^i \text{ since $x_i^\star=A^i$}\\
    &\Longleftrightarrow \mathbb{N},w\vDash_{\mathsf{K3}} \theta \text{ by ($\boxminus$)} \\
    &\Longleftrightarrow \theta \text{ is } 0=0 \text{ since $\theta\in\{0=0,0=1,\lambda \}$}\\
    &\Longleftrightarrow \Phi(s)_i = T(x_i) \text{ by ($\circledast$)}
\end{flalign*}
Exactly analogous arguments deliver exactly analogous conclusions. Summarizing, we have:
\begin{flalign*}
    \varphi_i \text{ is }T(x_i) &\Longleftrightarrow \Phi(s)_i = T(x_i)\\
    \varphi_i \text{ is }F(x_i) &\Longleftrightarrow \Phi(s)_i = F(x_i)\\
    \varphi_i \text{ is }N(x_i) &\Longleftrightarrow \Phi(s)_i = N(x_i)
\end{flalign*}

These equivalences show that $\varphi=\Phi(s)$. It is the assumption of Subsubcase 2.2.1 that $s\in\mathcal{S}^\Phi$, i.e., $\Phi(s)\in\mathcal{S}$, so $\varphi \in\mathcal{S}$, which is desired result.

\emph{Subsubcase 2.2.2:} Suppose $s\notin \mathcal{S}^\Phi$. Then $A^i_s$ is $C_s\to \lambda$. So $\mathbb{N},w\uparrow_{\mathsf{K3}}A^i_s$, whence $\mathbb{N},w\uparrow_{\mathsf{K3}}A^i$. As before, we conclude that $\mathbf{FIX},w\Vdash_{V^\star} N(x_i)$. So $\varphi$ is $N(x_1)\wedge \dots\wedge N(x_n)$. By the assumption of \textbf{Case (II)}, $\varphi\in\mathcal{S}$.

This completes the proof of Claim \ref{negative-claim}.
\end{proof}
Claim \ref{positive-claim} and Claim \ref{negative-claim} jointly yield \textbf{Case (II)} of \textbf{(1) implies (2)}. This completes the proof of Lemma \ref{main-lemma-multi}.
\end{proof}

\subsection{Completeness}
Combining Lemma \ref{main-lemma-multi} (the analysis of $\mathbf{FIX}$-satisfiability in terms of the prime conditions) with Carnap's normal form theorem (Theorem \ref{carnap}), yields a complete axiomatization of the modal theory of $\mathbf{FIX}$.

\begin{theorem}
        For all $n$-formulas $\varphi\in\mathcal{L}_\Box$, the following are equivalent:
        \begin{enumerate}
            \item[(i)] $\mathbf{FIX}\Vdash \varphi$
            \item[(ii)] $ \mathbf{S5[Con, Ground,Min_n]}\vdash \varphi$
        \end{enumerate}
\end{theorem}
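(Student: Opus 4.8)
The plan is to mimic the one-variable completeness proof, using Carnap's $n$-variable normal form theorem (Theorem~\ref{carnap}) together with the Main Lemma (Lemma~\ref{main-lemma-multi}) in place of the single-variable satisfiability lemma. The soundness direction $(ii)\Rightarrow(i)$ is already available: $\mathbf{FIX}$ validates $\mathbf{S5}$, $\mathbf{Con}$ (Remark~\ref{consistency-remark}), $\mathbf{Ground}$ and $\mathbf{Min_n}$ (Remark~\ref{min-fix-point}), and it is closed under the rules, so every theorem of $\mathbf{S5[Con,Ground,Min_n]}$ is $\mathbf{FIX}$-valid. The content is in the converse.

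For $(i)\Rightarrow(ii)$, I would argue by contraposition. Suppose $\mathbf{S5[Con,Ground,Min_n]}\nvdash\varphi$, so $\mathbf{S5[Con,Ground,Min_n]}+\neg\varphi$ is consistent. By Carnap's theorem (the $n$-variable version), $\neg\varphi$ is provably equivalent over $\mathbf{S5[Con,Ground,Min_n]}$ to a disjunction $\psi_1\vee\dots\vee\psi_m$ of $n$-isolators for that system; since $\neg\varphi$ is consistent, some disjunct $\psi=\varepsilon\wedge\iota$ is consistent, where $\varepsilon$ is an extensional $n$-isolator and $\iota$ an intensional $n$-isolator. Now the key bridge: an intensional $n$-isolator $\iota$ corresponds to a subset $\mathcal{S}\subseteq[3]^n$ (the cells $\varphi$ for which $\lozenge\varphi$ is a positive conjunct of $\iota$), and I need that the consistency of $\psi$ over $\mathbf{S5[Con,Ground,Min_n]}$ forces $\mathcal{S}$ to meet the prime conditions. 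This is where the axioms do their work: $\mathbf{Con}$ rules out the all-negative $\iota$ (so $\mathcal{S}\neq\emptyset$), $\mathbf{Ground}$ gives prime condition~(2) slice by slice, and $\mathbf{Min_n}$ (together with the earlier remark that $\mathbf{Min_n}\vdash\mathbf{Min_k}$ for $k\le n$) gives prime condition~(3) — intersecting several $3$-layers forces intersecting their common refinement. I also need that $\varepsilon\wedge\iota$ consistent means $\varepsilon$'s cell lies in $\mathcal{S}$; this is just the observation (used already in Lemma~\ref{isolator-completeness}) that $\mathbf{S5}$ proves $\varepsilon\to\lozenge\varepsilon$. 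Once $\mathcal{S}$ meets the prime conditions, Lemma~\ref{main-lemma-multi} gives a realization $\star$ with $\mathbf{FIX}\Vdash_{V^\star}\iota$, and at the $\varepsilon$-witnessing world we get $\mathbf{FIX}\nVdash_{V^\star}\varphi$, hence $\mathbf{FIX}\nVdash\varphi$.

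The main obstacle — or at least the one step requiring genuine care rather than bookkeeping — is verifying that the $\mathbf{S5[Con,Ground,Min_n]}$-consistency of an intensional $n$-isolator $\iota$ is \emph{equivalent} to the associated $\mathcal{S}$ meeting the prime conditions. One direction (consistency implies prime conditions) is the derivation sketched above from the three axiom schemes; the other direction (prime conditions imply consistency) is exactly what Lemma~\ref{main-lemma-multi} delivers, since a $\mathbf{FIX}$-satisfiable formula is a fortiori consistent with any sound theory. Thus the completeness theorem is essentially the conjunction of Carnap's normal form theorem with the Main Lemma, glued by this syntactic characterization of which isolators are consistent; I would state that characterization as an intermediate claim and prove it first. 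The remaining details — pushing a $\Box$ in front of $\neg\varphi$ is \emph{not} needed here since we keep $\varphi$ general and only need one falsifying world, and the passage from a falsifying world to $\mathbf{FIX}\nVdash\varphi$ — are routine.
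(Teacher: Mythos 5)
Your proposal is correct and follows essentially the same route as the paper: contraposition, Carnap's $n$-variable normal form to extract a consistent $n$-isolator, the observation that its consistency forces the associated subset of $[3]^n$ to meet the prime conditions, and then Lemma~\ref{main-lemma-multi} to produce a falsifying realization. In fact you make explicit two steps the paper leaves tacit (consistency $\Rightarrow$ prime conditions via $\mathbf{Con}$, $\mathbf{Ground}$, $\mathbf{Min_n}$, and passing through the $\lozenge\varepsilon$-witnessing world), which is a faithful filling-in rather than a different argument.
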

\begin{proof}
    The soundness direction is trivial. Let's focus on completeness. Suppose that $ \mathbf{S5[Con, Ground, Min_n]}\nvdash \varphi$. By the normal form theorem, 
$$\mathbf{S5[Con, Ground, Min_n]}\vdash \neg \varphi \leftrightarrow (\psi_1\vee\dots\vee\psi_k)$$ 
where each $\psi_i$ is a disjunction of $n$-isolators. Since $\neg \varphi$ is consistent, at least one of the disjuncts $\psi_i$ is also consistent. It follows that $\psi_i$ meets the prime conditions. By Lemma \ref{main-lemma-multi}, there is an $\mathcal{L}_T$ realization $\star$ such that 
        $$\mathbf{FIX} \Vdash_{V^\star}\psi_i.$$
Hence, we infer that $\mathbf{FIX} \nVdash\varphi.$
\end{proof}



\subsection{Counting Definable Relations}

Lemma \ref{main-lemma-multi} yields an explicit characterization of the $\mathbf{FIX}$-definable binary relations, namely, each $\mathbf{FIX}$-definable binary relation is a disjunction of intensional 2-isolators. The following counting result follows from this characterization. 

\begin{theorem}
    The number of $\mathbf{FIX}$-definable binary relations is exactly $2^{276}$.
\end{theorem}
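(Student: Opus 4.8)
The plan is to reduce the count of $\mathbf{FIX}$-definable binary relations to the count of subsets $\mathcal{S}$ of the $3\times 3$ matrix satisfying the prime conditions. By Lemma \ref{main-lemma-multi} together with Carnap's normal form theorem (Theorem \ref{carnap}), every $\mathbf{FIX}$-definable binary relation is a disjunction of intensional $2$-isolators, and the intensional $2$-isolators for $\mathbf{S5[Con,Ground,Min_2]}$ are in bijection with those $\mathcal{S}\subseteq[3]^2$ meeting the prime conditions. Moreover, distinct such $\mathcal{S}$ correspond to distinct (indeed incompatible) $2$-isolators, so distinct sets of intensional $2$-isolators yield distinct relations. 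Hence if there are $m$ subsets of the matrix meeting the prime conditions, the number of $\mathbf{FIX}$-definable binary relations is exactly $2^m$; the claim then amounts to showing $m = 276$.

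First I would recall the $n=2$ form of the prime conditions from the remark above: $\mathcal{S}$ is nonempty; if $\mathcal{S}$ meets rows $1$ and $2$ it meets row $3$; if $\mathcal{S}$ meets columns $1$ and $2$ it meets column $3$; and if $\mathcal{S}$ meets row $3$ and column $3$ it contains the corner cell $(3,3)$. Then I would count the subsets of the $3\times 3$ grid satisfying all four conditions. The cleanest route is inclusion–exclusion (or a direct case split) on which rows and columns $\mathcal{S}$ meets. Concretely, I would stratify by the pair $(R,C)$ where $R\subseteq\{1,2,3\}$ is the set of rows met and $C\subseteq\{1,2,3\}$ the set of columns met; the constraints ``$1,2\in R \Rightarrow 3\in R$'' and ``$1,2\in C\Rightarrow 3\in C$'' restrict $R$ and $C$ each to $6$ allowed values, and the corner condition couples $R$ and $C$ only when $3\in R$ and $3\in C$. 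For each admissible $(R,C)$ one counts the $0$–$1$ fillings of the grid whose nonzero entries project exactly onto $R$ and $C$ (an onto-type count, done by inclusion–exclusion on the $\le 3\times 3$ subgrid indexed by $R\times C$), subtract off those that would need the $(3,3)$ cell empty when the corner condition applies, and sum. Summing these contributions should yield $276$.

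The main obstacle is purely the bookkeeping in that final count: getting the onto-projection counts right for each of the (up to) $36$ pairs $(R,C)$, and correctly handling the corner condition, which interacts with the cases $3\in R, 3\in C$ in a way that is easy to double-count or miss. I expect no conceptual difficulty — the reduction to counting prime-condition tensors is immediate from Lemma \ref{main-lemma-multi} — so the write-up will mostly be an organized enumeration, ideally presented as a table of $(R,C)$ cases with running subtotals, culminating in $m=276$ and hence $2^{276}$ definable binary relations.
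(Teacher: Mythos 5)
Your reduction is essentially the paper's: by Lemma \ref{main-lemma-multi} (plus the normal form theorem, Theorem \ref{carnap}, and the observation that the mutually exclusive, $\mathbf{FIX}$-satisfiable isolators make distinct disjunctions define distinct relations), counting $\mathbf{FIX}$-definable binary relations reduces to counting subsets of the $3\times 3$ matrix meeting the prime conditions, and the answer is $2^m$ where $m$ is that count. That part is fine and matches the paper.

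The gap is that you never actually establish $m=276$, which is the entire numerical content of the theorem; ``summing these contributions should yield $276$'' is a promissory note, not a proof. Your stratification by the exact row/column support $(R,C)$ would work in principle, but it commits you to roughly $36$ onto-type inclusion--exclusion counts with the corner constraint threaded through the cases where $3\in R$ and $3\in C$, and none of that bookkeeping is carried out or even sampled, so there is no check that the plan lands on $276$ rather than some nearby number. The paper's count is both shorter and already verified: count the complement among all $2^9$ subsets --- the empty set ($1$), the sets meeting rows $1,2$ but not row $3$ ($(2^3-1)^2=49$), the analogous column violators ($49$), adding back the $7$ sets violating both, and the $3\cdot 3\cdot 16=144$ sets meeting row $3$ and column $3$ but omitting $(3,3)$ (these are disjoint from the previous violators, since they meet row $3$ and column $3$) --- giving $2^9-1-2\cdot 49+7-144=276$. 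If you keep your stratified approach, you must either exhibit the full table of $(R,C)$ cases with subtotals or replace it with a complement count of this kind; as written, the key step is missing.
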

\begin{proof}
First we need to determine the number of intensional 2-isolators. By Lemma \ref{main-lemma-multi}, this is to count the number of subsets of the $3\times 3$ matrix meeting the prime conditions.

There are $2^{3^2}$ subsets of the $3\times 3$ matrix in total. We must subtract those sets that do not meet the prime conditions:
\begin{enumerate}
    \item The empty set. Subtract 1.
    \item The number of sets that intersect the first two rows but not the third is $(2^3 -1)^2=49$. Indeed, we must pick at least one element from the first row and at least one from the second and none from the third. The first row has three cells. Hence it has $2^3 -1=7$ non-empty subsets. Likewise for the second. So we must subtract $7^2=49$ total.
    \item The number of sets that intersect the first two columns but not the third is likewise $(2^3 -1)^2=49$.
    \item We must add back in all those sets that violated both the rows condition and the columns condition. Note that if a set intersects the first two rows but not the third and the first two columns but not the third then it is a subset of the upper left $2\times 2$ square. Exactly $7$ of these sets intersects the first two rows and first two columns.
    \item The sets that intersect row 3 and column 3 but not (3,3). There are $2^2-1=3$ non-empty subsets of the first two slots of the 3rd row. Multiply this by the $3$ ways of doing the same for the 3rd column. Then multiply this with the number of ways of filling in the upper left $2\times 2$ square, which is $2^4=16$. This makes for $3\times 3\times 16$ total.
\end{enumerate}

 Here is the calculation:
    $$2^9 -1 - 49\times 2 + 7 - 3\times 3\times 16 = 276$$
So there are $276$ intensional 2-isolators. The definable binary relations are exactly the disjunctions of intensional 2-isolators, so there are $2^{276}$ of them. This completes the proof.
\end{proof}

How many $\mathbf{FIX}$-definable $n$-ary relations are there? The following coarse estimate is immediate:
\begin{proposition}
        The number of $\mathbf{FIX}$-definable $n$-ary relations is $2^k$ for some $k$ in the interval $(2^{3^n -1},2^{{3^n}})$.
\end{proposition}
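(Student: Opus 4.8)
The goal is to bound the number of $\mathbf{FIX}$-definable $n$-ary relations between $2^{2^{3^n-1}}$ and $2^{2^{3^n}}$. By Lemma \ref{main-lemma-multi} together with Carnap's normal form theorem, every $\mathbf{FIX}$-definable $n$-ary relation is (uniquely, modulo logical manipulation) a disjunction of intensional $n$-isolators, and the intensional $n$-isolators correspond exactly to the subsets $\mathcal{S}\subseteq [3]^n$ meeting the prime conditions. So if $m$ denotes the number of such subsets, the number of $\mathbf{FIX}$-definable $n$-ary relations is exactly $2^m$. Hence it suffices to show $2^{3^n-1} < m < 2^{3^n}$.

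**Key steps.** First, the upper bound $m < 2^{3^n}$: there are exactly $2^{3^n}$ subsets of $[3]^n$ in total, and at least one of them — the empty set — fails the prime conditions (condition (1)), so $m \leq 2^{3^n}-1 < 2^{3^n}$. Second, the lower bound $m > 2^{3^n-1}$: I would produce an injection from the power set of $[3]^n \setminus \{(3,\dots,3)\}$-restricted-to-those-containing-$(3,\dots,3)$... more cleanly, I would show that \emph{every} subset $\mathcal{S}$ containing the cell $(3,\dots,3)$ meets the prime conditions. Indeed, condition (1) (non-emptiness) is immediate; condition (2) is vacuously satisfied because $(3,\dots,3)\in j_3$ for every slice $j$, so $\mathcal{S}$ always intersects $j_3$ and the implication's conclusion always holds; and condition (3) is satisfied because $(3,\dots,3)$ lies in $s^1_3\cap\dots\cap s^t_3$ for any collection of slices, so that intersection is always nonempty and met by $\mathcal{S}$. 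There are exactly $2^{3^n-1}$ subsets of $[3]^n$ containing the fixed cell $(3,\dots,3)$, so $m \geq 2^{3^n-1}$; and since there is at least one further subset meeting the prime conditions not of this form (e.g. an entire $1$-slice layer $j_1$, which is nonempty, is contained in a single layer so conditions (2) and (3) hold vacuously, and does not contain $(3,\dots,3)$), we get the strict inequality $m > 2^{3^n-1}$.

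**Main obstacle.** There is essentially no obstacle here; the proposition is deliberately stated as a coarse consequence of the already-proven machinery, and the only mild care needed is checking that conditions (2) and (3) of the prime conditions are genuinely vacuous when $(3,\dots,3)\in\mathcal{S}$ (because that cell belongs to $j_3$ for all $j$ and to every intersection of $3$-layers), and producing one witness outside that family for strictness. I would present it in precisely this compressed form: state $m = $ number of prime-condition subsets, sandwich $2^{3^n-1} \le m \le 2^{3^n}-1$ by the two observations above, upgrade the lower inequality to strict via the single-layer example, and conclude that $2^m$ lies in the open interval $(2^{2^{3^n-1}}, 2^{2^{3^n}})$.
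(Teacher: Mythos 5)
Your proposal is correct and follows essentially the same route as the paper: identify the definable $n$-ary relations with disjunctions of intensional $n$-isolators, bound the number $k$ of isolators above by the number of subsets of the $3^n$ tensor and below by the $2^{3^n-1}$ subsets containing $(3,\dots,3)$, all of which satisfy the prime conditions. You are in fact somewhat more careful than the paper about the strictness of both inequalities (the paper glosses over this); the only quibble is that for $n\ge 2$ conditions (2) and (3) are not \emph{vacuous} for a full layer such as $j_1$ --- they have true antecedents for the other slices --- but they do hold, so your extra witness is valid.
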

\begin{proof}
    A definable $n$-ary relation is a disjunction of intensional $n$-isolators. So this number is $2^k$ where $k$ is the disjunction of intensional $n$-isolators. So now we will calculate bounds on $k$.
    
    Each intensional $n$-isolator is a subset of the set of extensional $n$-isolators, i.e., subset of the $3^n$ tensor. This yields the upper bound $2^{3^n}$.

    Every subset of the $3^n$ tensor that includes $(3,\dots,3)$ is an intensional $n$-isolator. There are $2^{3^n-1}$ such subsets. This yields the lower bound.
\end{proof}
Clearly this interval is huge (and only grows as $n$ grows). One could perhaps use the inclusion--exclusion principle to count exactly the number of $\mathbf{FIX}$-definable $n$-ary relations, but it is not clear whether the resulting formula would be at all informative. Instead of focusing on the exact number, one could aim to find a more informative estimate, but we will not pursue this further in this paper.


\printbibliography

\end{document}